\tikzset{%
	symbol/.style={%
		draw=none,
		every to/.append style={%
			edge node={node [sloped, allow upside down, auto=false]{$#1$}}}
	}
}
\theoremstyle{theorem}
\newtheorem{theorem}{Theorem}
\numberwithin{theorem}{section}
\newtheorem{lemma}[theorem]{Lemma}
\newtheorem{proposition}[theorem]{Proposition}
\newtheorem{corollary}[theorem]{Corollary}
\theoremstyle{definition}
\newtheorem{definition}[theorem]{Definition}
\newtheorem{example}[theorem]{Example}
\theoremstyle{remark}
\newtheorem{remark}[theorem]{Remark}
\theoremstyle{notation}
\newtheorem{notation}[theorem]{Notation}
\newcommand{\Gr}{\mathbf{Gray}}
\newcommand{\A}{\mathcal{A}}
\newcommand{\B}{\mathcal{B}}
\newcommand{\Cat}{\mathbf{Cat}}
\title{Abstract Kleisli Structures on $2$-categories\footnote{This material is based on Chapter 8 of my PhD thesis \cite{Miranda PhD}. I am grateful for having been supported by the MQRES PhD Scholarship 20192497 while conducting this research, and by EPSRC under grant EP/V002325/2 while writing the paper. I thank Richard Garner for suggesting this as a research topic, Steve Lack for his guidance while I was conducting this research, Nicola Gambino and Mike Johnson for their guidance while I was preparing this paper, and J. S. Lemay for his encouragement.}}
\author{Adrian Miranda
\institute{Department of Mathematics\\University of Manchester\\ Manchester\\ United Kingdom}
\email{\quad adrian.miranda@manchester.ac.uk}
}
\begin{document}
\maketitle

\begin{abstract}
Führmann introduced Abstract Kleisli structures to model call-by-value programming languages with side effects, and showed that they correspond to monads satisfying a certain equalising condition on the unit. We first extend this theory to non-strict morphisms of monads, and to incorporate $2$-cells of monads. We then further extend this to a theory of abstract Kleisli structures on $2$-categories, characterising when the original pseudomonad can be recovered by the abstract Kleisli structure on its $2$-category of free-pseudoalgebras.
\end{abstract}

\section{Introduction}\label{Chapter Abstract Kleisli Structures}

\subsection{Context and motivation}\label{Section context and motivation}
Abstract Kleisli structures, also known as thunk-force categories, axiomatise structure that one finds on the Kleisli category of any monad, and have been used to provide direct models of the computational $\lambda$-calculus \cite{Fuhrmann}. Their duals, which axiomatise structure on the coKleisli category of any comonad, have also found applications to runnable monads \cite{Erwig Ren Monadification of functional programs} and the theory of cartesian differential categories \cite{Lemay Cartesian Differential Kleisli Categories}. Variants such as cartesian reverse differential categories build upon the latter of these and are used in modern categorical treatments of gradient-based learning \cite{Categorical Foundations of Gradient Based Learning}. Mathematically, abstract Kleisli structures capture precisely those monads whose unit $\eta: 1_B \Rightarrow T$ is the equaliser of $T\eta$ and $\eta_{T}$. This condition is also equivalent to saying that the Eilenberg Moore adjunction of the monad is of \emph{codescent type} which means that the comparison from $B$ to the category of coalgebras for the comonad induced on $B^T$ is fully-faithful.
\\
\\
\noindent Pseudomonads generalise monads to the two-dimensional setting by allowing conditions such as naturality and monad laws to hold up to isomorphism. They have also received attention in computer science \cite{Pacquet Saville Strong Pseudomonads and Premonoidal Bicategories}, where their $2$-cells allow outputs of computations to be considered before being rewritten or identified in a normal form. However, as yet abstract Kleisli structures remain unexplored in the two-dimensional context. We fill this gap in the literature, and lay the mathematical foundations for future work on the two-dimensional $\lambda$-calculus and differential $\lambda$-calculus \cite{Fiore Gambino Hyland Monoidal Bicategories differential linear logic and analytic functors}.
\subsection{Outline}

Section \ref{Section One dimensional abstract Kleisli} reviews and extends the main results on abstract Kleisli structures in the one-dimensional setting (Theorem 5.3, Lemmas 5.6, 5.28 and 5.29 of \cite{Fuhrmann}). These results exhibit abstract Kleisli structures as objects of a reflective full-subcategory of a category of monads and characterise the monads which correspond to abstract Kleisli structures. We contribute a new, concise definition of abstract Kleisli structures and their categories of `thunkable morphisms', using a lifting condition. We also extend the results of \cite{Fuhrmann} in two directions. The first direction of generalisation is from strict morphisms of monads to the more general morphisms of monads introduced in \cite{Formal Theory of Monads} which are in one-to-one correspondence with extensions between Kleisli categories. The second is to describe the two kinds of $2$-cells between abstract Kleisli structures, namely the monad transformations and the more general $2$-cells considered in \cite{FTM2}.
\\
\\
The remaining sections extend this theory to the $2$-categorical setting. Section \ref{Section Abstract Kleisli 2-categories} defines abstract Kleisli structures on $2$-categories, and thunkability in the two-dimensional setting. Section \ref{Section the isobidescent condition} describes a bicategorical limit condition on the unit which is shown in Theorem \ref{Two dimensional Abstract Kleisli equivalent characterisations} to characterise those pseudomonads that are recoverable from the abstract Kleisli structures on their $2$-categories of free pseudoalgebras. We prove some intermediate results towards this goal in Section \ref{Section some intermediate calculations}. Finally, in Theorem \ref{J a unit for 2 abstract kleisli structures} we exhibit abstract Kleisli structures on $2$-categories as a full, reflective sub-$\mathbf{Gray}$ category of two suitable $\Gr$-categories of pseudomonads.

\subsection{Assumed knowledge, conventions and techniques}

We assume familiarity with pseudomonads and their $2$-categories of pseudoalgebras, fixing notation in Definition \ref{definition pseudomonad} and referring to \cite{Doctrines fully faithful adjoint string} for details. Many of our proofs require chasing large pasting diagrams. These are typically omitted for the sake of brevity but can be found in either Chapter 8 or the appendices of the author's Ph. D. thesis \cite{Miranda PhD}. Some sample calculations are included in Appendix \ref{Appendix} for illustrative purposes. In these proofs we freely use the pasting theorem for $2$-categories \cite{Power 2 cat pasting}. A particular bicategorical limit \cite{Kelly elementary observations on 2-categorical limits} called an isobidescent object will be needed, and its relevant properties will be described explicitly in Section \ref{Section the isobidescent condition}. The results of Section \ref{Section Main Results} are expressed in terms of the semi-strict three-dimensional categories \cite{Gurski Coherence in Three Dimensional Category Theory}, or $\Gr$-categories, that pseudomonads form. One of these structures is the Kleisli version of the $\Gr$-category of pseudomonads defined in \cite{Marmolejo Pseudodistributive 1}, while the other extends this structure in the spirit of \cite{FTM2}. Here $\mathbf{Gray}$ denotes $2$-$\Cat$ equipped with the $\mathbf{Gray}$-tensor product \cite{Gray Formal Category Theory}, and a $\Gr$-category is a category enriched over this base.

\section{Abstract Kleisli structures on categories}\label{Section One dimensional abstract Kleisli}

\noindent We begin with a reformulation of the definition of abstract Kleisli structures in Definition \ref{Definition abstract kleisli structure} and the corresponding monad on the category of thunkable morphisms in Proposition \ref{Proposition adjunction and induced comonad}. We then recall $2$-categories of monads from \cite{FTM2} in Notation \ref{notation 2-categories of monads}, and use this to define morphisms and $2$-cells of abstract Kleisli structures in Definition \ref{def morphisms of abstract kleisli}. In contrast, no $2$-cells of abstract Kleisli structures are defined in \cite{Fuhrmann} while their morphisms of abstract Kleisli structures commute with all structure on the nose and correspond to strict morphisms of monads.

\begin{definition}\label{Definition abstract kleisli structure}
	\hspace{1mm}
	\begin{enumerate}
		\item An \emph{abstract Kleisli structure} on a category $B$ consists of
		
		\begin{itemize}
			\item A comonad $\left(Q, \varepsilon, \delta\right)$ on $B$.
			\item A functor $\theta: B_{0} \rightarrow B^Q$ providing a lifting as in the following diagram, where the unlabelled horizontally depicted functors include the discrete category on the set of objects of $B$, and $U^Q \dashv F^Q$ is the co-Eilenberg-Moore adjunction for $(B, Q, \varepsilon, \delta)$. 
		\end{itemize}
		$$\begin{tikzcd}[column sep = 20, row sep = 15, font=\fontsize{9}{6}]
			B_0
			\arrow[rr]
			\arrow[dddd,"Q_0"']
			&&
			B
			\arrow[rr,"F^Q"]
			&&
			B^Q
			\arrow[dddd,"U^Q"]
			\\
			\\
			\\
			\\
			B_0
			\arrow[rrrruuuu,dashed,"\theta"]
			\arrow[rrrr]
			&&&&
			B
		\end{tikzcd}$$
	\item Given an abstract Kleisli structure on $B$, the associated \emph{category of thunkable morphisms} is given as the factorisation of $\theta$ as displayed below, in which $K$ is fully faithful and $\theta'$ is bijective on objects.
	
	$$\begin{tikzcd}
		B_{0} \arrow[r, "\theta'"] & B_\theta \arrow[r, "K"] & B^Q
	\end{tikzcd}$$
	
	\end{enumerate}
\end{definition}

\begin{example}\label{abstract kleisli structure on kleisli category}
	Let $\left(A, S, \eta, \mu\right)$ be a monad. Then the Kleisli category $A_S$ inherits an abstract Kleisli structure with $\left(Q, \varepsilon, \delta\right)$ the comonad induced by the Kleisli adjunction and $\theta_{X} := F_{S}\eta_{X}$. This captures all examples, and gives the concept its name.
\end{example}

\begin{remark}\label{comparison to fuhrmann}
	  Definition \ref{Definition abstract kleisli structure} part (1) indeed recaptures Definition 2.1 of \cite{Fuhrmann}. The latter consists of a co-pointed endofunctor $\left(B, Q, \varepsilon\right)$, an unnatural transformation $\theta: 1_B \nRightarrow Q$ and various axioms amounting to comonad laws for $(Q, \varepsilon, \theta_{Q})$ and coalgebra laws for $\theta_{X}: X \to QX$. The commutativity of the top triangle in Definition \ref{Definition abstract kleisli structure} part (1) amounts to $\delta = \theta_{Q}$, while the morphisms in the intermediate category $B_\theta$ constructed in Definition \ref{Definition abstract kleisli structure} part (2) are indeed the thunkable morphisms described in Definition 2.3 of \cite{Fuhrmann}; the condition for $f: (X, \theta_{X}) \to (Y, \theta_{Y})$ to be a morphism of coalgebras is precisely naturality of the assignment $X \mapsto \theta_{X}$ in the morphism $f: X \to Y$.
\end{remark}

\begin{proposition}\label{Proposition adjunction and induced comonad}
	The composite functor $F_\theta:=$\begin{tikzcd}
		B_\theta \arrow[r, "K"] & B^Q \arrow[r, "U^Q"] & B
	\end{tikzcd} is faithful and has a right adjoint $U_\theta$, such that the comonad induced on $B$ is $\left(Q, \varepsilon, \delta\right)$.
\end{proposition}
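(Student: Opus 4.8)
The plan is to construct $F_\theta$ and its right adjoint explicitly, using the identification from Remark \ref{comparison to fuhrmann} of the arrows of $B_\theta$ with the $Q$-coalgebra morphisms and the identity $\delta = \theta_Q$ coming from the top triangle of Definition \ref{Definition abstract kleisli structure}. First I would unwind $F_\theta = U^Q K$: it is the identity on objects and sends a thunkable morphism to its underlying morphism in $B$. Since $K$ is fully faithful and the forgetful functor $U^Q$ out of the coalgebras is faithful, the composite $F_\theta$ is faithful, which settles the first assertion.

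For the right adjoint I would set $U_\theta X := QX$ on objects and $U_\theta g := Qg$ on morphisms. The only point needing checking is that $Qg$ is thunkable, i.e.\ a coalgebra morphism $(QX,\theta_{QX}) \to (QY,\theta_{QY})$; since $\theta_Q = \delta$, this condition reads $Q(Qg)\circ\delta_X = \delta_Y\circ Qg$, which is just naturality of $\delta$. Functoriality of $U_\theta$ is inherited from that of $Q$. I would then take the unit to be $\eta_X := \theta_X$, regarded as an arrow $X \to QX = U_\theta F_\theta X$ of $B_\theta$: it is thunkable because the coalgebra coassociativity law $\delta_X\circ\theta_X = Q\theta_X\circ\theta_X$ is exactly its coalgebra-morphism condition, and naturality of $\eta$ in $B_\theta$ is precisely the statement that every arrow $f$ of $B_\theta$ satisfies $Qf\circ\theta_X = \theta_Y\circ f$. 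The counit I would take to be the comonad counit $\varepsilon_X : QX \to X$.

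With these choices each triangle identity collapses to a single structural law: the first becomes $\varepsilon_X\circ\theta_X = 1_X$, the counit law for the coalgebra $(X,\theta_X)$, and the second becomes $Q\varepsilon_X\circ\delta_X = 1_{QX}$, the counit law for the comonad $Q$. Having established $F_\theta \dashv U_\theta$, I would read off the induced comonad on $B$: its endofunctor is $F_\theta U_\theta = U^Q K U_\theta = U^Q F^Q = Q$, its counit is $\varepsilon$, and its comultiplication $F_\theta\eta U_\theta$ has component $F_\theta(\eta_{QX}) = \theta_{QX} = \delta_X$ at $X$; hence the induced comonad is exactly $(Q,\varepsilon,\delta)$.

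I do not expect a genuine obstacle, since every verification reduces to one comonad or coalgebra axiom once thunkable arrows are recognised as $Q$-coalgebra morphisms and $\delta = \theta_Q$ is invoked; the only real care is in tracking composite directions and confirming that each candidate unit/counit component lands among the thunkable morphisms rather than arbitrary arrows of $B$. An alternative and more abstract route that bypasses the triangle identities is to observe that $F^Q$ factors through the fully faithful $K$ as $F^Q = K U_\theta$ (again because $F^Q X = (QX,\delta_X) = (QX,\theta_{QX})$), and then to transport the adjunction $U^Q \dashv F^Q$ across $K$ via the natural isomorphisms $B(U^Q K b, c) \cong B^Q(Kb, F^Q c) = B^Q(Kb, K U_\theta c) \cong B_\theta(b, U_\theta c)$; the explicit construction is preferable, however, because it exhibits the comultiplication as $\delta$ directly.
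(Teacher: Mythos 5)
Your proposal is correct and follows essentially the same route as the paper's proof: faithfulness from faithfulness of $K$ and of the forgetful functor, right adjoint $U_\theta = Q$ landing in $B_\theta$ via (free) coalgebra morphisms, unit $\theta$ (thunkable by coassociativity, natural by construction of $B_\theta$), counit $\varepsilon$, triangle identities reduced to the coalgebra counit law and the comonad counit law, and the identification $\delta = \theta_Q$ yielding the induced comonad. The only difference is presentational: you spell out the triangle identities and the alternative transport-along-$K$ argument explicitly, which the paper leaves terse.
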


\begin{proof}
	First observe that forgetful functors from categories of coalgebras are faithful, and $K$ is faithful by construction, so the composite $F_\theta$ is also faithful. The right adjoint acts as $U_\theta\left(f: X \rightarrow Y\right) = Qf: QX \rightarrow QY$, with these outputs being morphisms of free coalgebras and hence in $B_\theta$. The unit of the adjunction is given by $\theta$, which is itself in $B_\theta$ by the coassociativity axiom for each coalgebra $\left(X, \theta_{X}\right)$. Naturality for $\theta$ as a unit for the adjunction holds by construction of $B_\theta$, while right triangle identity holds in $B_\theta$ by the right unit law for $\left(Q, \varepsilon, \theta_{Q}\right)$ and the left triangle identity holds in $B$ by the unit law for $\left(X, \theta_{X}\right)$ as a coalgebra. Finally, since $\delta = \theta_Q$, we see that the comonad induced on $B$ is indeed $\left(Q, \varepsilon, \delta\right)$.
\end{proof}

\begin{notation}\label{notation 2-categories of monads}
	For $\kappa \in \{\tau, \lambda\}$, the $2$-category $\mathbf{Monads}_{\kappa}$ has objects given by monads and morphisms $(A, S) \to (B, T)$ given by pairs of functors $F: A \to B$ and $\overline{F}: A_S \to B_T$ commuting with Kleisli left adjoints. These will be referred to as \emph{co-morphisms} of monads. A $2$-cell $(\phi, \overline{\phi}): (F, \overline{F}) \Rightarrow (G, \overline{G})$ in $\mathbf{Monads}_{\tau}$ consists of a pair of natural transformations $\phi: F \Rightarrow G$ and $\overline{\phi}: \overline{F} \Rightarrow \overline{G}$ satisfying a commutativity condition with the left adjoints. Meanwhile, a $2$-cell in $\mathbf{Monads}_{\lambda}$ just consists of natural transformation between the Kleisli categories. The $2$-functor $\mathbf{Monads}: \mathbf{Monads}_{\tau} \rightarrow \mathbf{Monads}_{\lambda}$ is similar to the one described in 2.1 of \cite{FTM2} with Kleisli categories instead of Eilenberg-Moore categories. If a $2$-category is denoted with the subscript $\tau$ (resp. $\lambda$) then its $2$-cells will be called \emph{tight} (resp. \emph{loose}). 
\end{notation}

\begin{definition}\label{def morphisms of abstract kleisli}
	Let $\mathbf{AbsKL}_{0}$ be the class of abstract Kleisli structures and let $\tau: \mathbf{AbsKL}_{0} \to \mathbf{Monads}_{\tau}$ be the class function which sends an abstract Kleisli structure to the monad on its category of thunkable morphisms as per Proposition \ref{Proposition adjunction and induced comonad}. The $2$-category $\mathbf{AbsKL}_{\tau}$ is defined as the image of $\tau$, while the $2$-category $\mathbf{AbsKL}_\lambda$ is defined as the image of the composite \begin{tikzcd}
		\mathbf{AbsKL}_{0} \arrow[r, "\tau"] & \mathbf{Monads}_{\tau} \arrow[r, "\mathbf{Monads}"] & \mathbf{Monads}_{\lambda}
	\end{tikzcd}.
\end{definition}

\noindent This perspective is used to define $\mathbf{Gray}$-categories of abstract Kleisli structures on $2$-categories in Definitions \ref{trikleisli extensions} and \ref{Gray-cat of Abs KL of 2-cats}. Proposition \ref{Proposition simplification of morphisms and 2-cells of abstract Kleisli structures}, to follow, re-expresses the data of Definition \ref{def morphisms of abstract kleisli} in terms of compatibility with the data in an abstract Kleisli structure.

\begin{proposition}\label{Proposition simplification of morphisms and 2-cells of abstract Kleisli structures}
	Let $\left(A, P, \pi\right)$ and $\left(B, Q, \theta\right)$ be abstract Kleisli structures and let $F_\pi: A_\pi \rightarrow A$ and $F_\theta: B_\theta \rightarrow B$ be the left adjoints described in Proposition \ref{Proposition adjunction and induced comonad}. Let $\overline{G}: A \rightarrow B$ be a functor. 
	
	\begin{enumerate}
		\item To give $G: A_\pi \rightarrow B_\theta$ such that $\left(G, \bar{G}\right)$ is a morphism of abstract Kleisli structures is to assert that $\overline{G}$ preserves thunkability. That is, if $f: X \rightarrow Y$ satisfies $\pi$-naturality then $\overline{G}f$ satisfies $\theta$-naturality.
		\item Given $(H, \overline{H}): (A, P, \pi) \to (B, Q, \theta)$ another morphism of abstract Kleisli structures, to give a loose $2$-cell $\phi: (G, \overline{G}) \Rightarrow (H, \overline{H})$ is just to give a natural transformation $\overline{\phi}: \overline{G} \Rightarrow \overline{H}$.
		\item Given $\overline{\phi}$ as in part (2), to give a $\phi$ making $\left(\phi, \bar{\phi}\right)$ into a tight $2$-cell is to assert that the components $\overline{\phi}_{X}$ are thunkable.
		\item There is a commutative square of $2$-functors as depicted below, in which the horizontal maps are $2$-fully faithful and send an abstract Kleisli structure $\left(B, Q, \theta\right)$ to the monad induced on $B_\theta$ from the adjunction described in Proposition \ref{Proposition adjunction and induced comonad}. 
		
		$$\begin{tikzcd}[font=\fontsize{9}{6}]
			\mathbf{AbsKL}_{\tau}
			\arrow[rr]
			\arrow[dd, "\mathbf{AbsKL}"']
			&&\mathbf{Monads}_\tau
			\arrow[dd, "\mathbf{Monads}"]
			\\
			\\
			\mathbf{AbsKL}_{\lambda} \arrow[rr] &&\mathbf{Monads}_\lambda
		\end{tikzcd}$$
	\end{enumerate}
\end{proposition}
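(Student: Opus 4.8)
The plan is to prove each of the four parts by unwinding the definitions from Notation~\ref{notation 2-categories of monads} and Definition~\ref{def morphisms of abstract kleisli}, and recognising that the morphisms and $2$-cells of monads, when restricted to those arising from abstract Kleisli structures, translate into statements purely about thunkability (that is, about $\pi$- and $\theta$-naturality of the unnatural unit transformations). Throughout I would use the factorisations $\theta': B_0 \to B_\theta$, $K: B_\theta \hookrightarrow B^Q$ (and similarly for $\pi$) together with Proposition~\ref{Proposition adjunction and induced comonad}, which identifies $F_\theta$, $U_\theta$ and the induced comonad.

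For part~(1), I would start from the fact that a co-morphism of monads $(A,P) \to (B,Q)$ is a pair $(\overline{G}, G)$ with $\overline{G}: A \to B$, $G: A_\pi \to B_\theta$ commuting with the Kleisli left adjoints $F_\pi$ and $F_\theta$. Since $F_\pi$ and $F_\theta$ are faithful and the objects of $A_\pi$, $B_\theta$ coincide with those of $A_0$, $B_0$ respectively, the commutativity $F_\theta \circ G = \overline{G} \circ F_\pi$ forces $G$ to agree with $\overline{G}$ on the underlying data; the only real content is that $G$ must be well-defined, i.e.\ must send a thunkable morphism $f$ (one satisfying $\pi$-naturality) to a thunkable morphism $\overline{G}f$ (satisfying $\theta$-naturality). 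Conversely, given that $\overline{G}$ preserves thunkability, one defines $G$ on objects as $\overline{G}$ and on thunkable morphisms by $\overline{G}$, checks functoriality (inherited from $\overline{G}$) and the compatibility with left adjoints, so the two data are interchangeable. This is essentially a restatement of Remark~\ref{comparison to fuhrmann}, where thunkability is identified with naturality of $\theta$ in $f$.

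For parts~(2) and~(3) I would unwind the two notions of $2$-cell. By Notation~\ref{notation 2-categories of monads}, a loose $2$-cell is just a natural transformation between the Kleisli categories, so in our restricted setting it amounts to a single natural transformation $\overline{\phi}: \overline{G} \Rightarrow \overline{H}$, giving part~(2) immediately. A tight $2$-cell additionally carries a natural transformation $\phi$ between the thunkable-morphism categories satisfying a compatibility condition with the left adjoints $F_\pi, F_\theta$. As in part~(1), faithfulness of $F_\theta$ and the object-identification mean that such a $\phi$ is uniquely determined by $\overline{\phi}$ when it exists, and it exists precisely when each component $\overline{\phi}_X: \overline{G}X \to \overline{H}X$ is itself a thunkable morphism, i.e.\ lies in $B_\theta$; this is exactly the condition that $\overline{\phi}$ lifts through $F_\theta$ componentwise, yielding part~(3).

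Finally, for part~(4) I would assemble the preceding identifications into the stated square. The horizontal functors are the class functions $\tau$ and its composite with $\mathbf{Monads}$ from Definition~\ref{def morphisms of abstract kleisli}, restricted to the images $\mathbf{AbsKL}_\tau$ and $\mathbf{AbsKL}_\lambda$; that they send $(B,Q,\theta)$ to the induced monad on $B_\theta$ is immediate from Proposition~\ref{Proposition adjunction and induced comonad}. The $2$-full-faithfulness of each horizontal map is precisely the content of parts~(1)--(3): surjectivity on morphisms and $2$-cells up to the thunkability conditions is built into the definition of the image, while injectivity follows from the faithfulness of the forgetful functors just used. Commutativity of the square is then the compatibility of $\mathbf{AbsKL}$ with $\mathbf{Monads}$ on both objects and arrows, which reduces to the observation that forgetting tightness (passing from a tight $2$-cell $(\phi,\overline{\phi})$ to the loose $2$-cell $\overline{\phi}$) is compatible with $\tau$. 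The main obstacle I anticipate is bookkeeping rather than conceptual: one must check carefully that the restricted $2$-functors are genuinely well-defined on the images and that the naturality and left-adjoint-compatibility conditions match up on the nose, since the two kinds of $2$-cells and the passage between $\tau$- and $\lambda$-variants interact through several forgetful functors at once.
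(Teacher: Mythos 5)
Your proposal is correct and takes essentially the same approach as the paper, whose proof is a single line: parts (1)--(3) are ``easy to observe using bijectivity on objects and faithfulness of the left adjoints'' $F_\pi$, $F_\theta$, and part (4) ``follows by construction.'' Your write-up simply makes explicit the determination of $G$ and $\phi$ by $\overline{G}$ and $\overline{\phi}$ that the paper leaves to the reader.
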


\begin{proof}
	Parts (1), (2) and (3) are easy to observe using bijectivity on objects and faithfulness of the left adjoints. Part (4) follows by construction.
\end{proof}

\begin{remark}
	Although it will not be needed for any of our proofs, we note that Proposition \ref{Proposition simplification of morphisms and 2-cells of abstract Kleisli structures} part (4) is a fully faithful $\mathbf{BO}$-enriched functor, in the sense of \cite{Lack Miranda Universal Property of 2-category of monads}.
\end{remark}

\begin{theorem}\label{Conditions characterising monads whose EM adjunctions are of codescent type}
	Let $\left(B, T, \eta, \mu\right)$ be a monad, $\underline{T}$ be the comonad induced on the Kleisli category $B_T$, and $\bar{T}$ be the comonad induced on the Eilenberg-Moore category $B^T$. The following are equivalent.
	
	\begin{enumerate}
		\item $\left(B, T, \eta, \mu\right)$ is in the essential image of $I_\kappa: \mathbf{AbsKL}_{\kappa} \to \mathbf{Monads}_{\kappa}$ for $\kappa \in \{\tau, \lambda\}$.
		\item The natural transformation $\eta$ is the equaliser of $T\eta$ and $\eta_{T}$.
		\item The Kleisli left adjoint $F_T: B \rightarrow B_T$ is both faithful and full on thunkable morphisms. 
		\item The canonical comparison $B \rightarrow {\left(B^T\right)}^{\bar{T}}$ is fully faithful.
		\item The canonical comparison $B \rightarrow {\left(B_T\right)}^{\underline{T}}$ is fully faithful.
	\end{enumerate}
\end{theorem}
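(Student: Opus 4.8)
The plan is to route all five conditions through the pointwise equaliser condition (2), which I will present as the concrete shadow of both the Kleisli-side descent statement and the recoverability statement. The single observation that makes everything cohere is that, under the identification of a Kleisli morphism $X \to Y$ with a $B$-morphism $f : X \to TY$, the morphism $f$ is \emph{thunkable} (i.e.\ a $\underline{T}$-coalgebra morphism between the free coalgebras $F_T X$ and $F_T Y$, with coalgebra structures $\theta_X = F_T\eta_X$ as in Example \ref{abstract kleisli structure on kleisli category}) precisely when $T\eta_Y \circ f = \eta_{TY}\circ f$. First I would verify this by unwinding the coalgebra-morphism square in $B_T$ into its $B$-representative and simplifying using the monad laws and the triangle identities. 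This is the one genuinely computational identification; all later steps are reinterpretations of it.

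Granting that identification, (2)$\Leftrightarrow$(3) is immediate: faithfulness of $F_T$ says exactly that each $\eta_Y$ is monic, while fullness of $F_T$ on thunkable morphisms says exactly that every $f$ equalising $T\eta_Y$ and $\eta_{TY}$ factors through $\eta_Y$; together these give the universal property asserting that $\eta_Y$ is the equaliser of $T\eta_Y$ and $\eta_{TY}$, which is (2). For (3)$\Leftrightarrow$(5) I would observe that, by the identification above, the hom-set of $(B_T)^{\underline{T}}$ between the free coalgebras $F_T X$ and $F_T Y$ is precisely the set of thunkable morphisms $X \to Y$; since the comparison $B \to (B_T)^{\underline{T}}$ acts as $F_T$ on homs, it is fully faithful iff $F_T$ induces a bijection onto the thunkable morphisms, i.e.\ iff (3) holds. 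For (3)$\Leftrightarrow$(1) I would use that $F_T h = \eta_Y \circ h$ is always thunkable (by naturality of $\eta$ at $\eta_Y$), so $F_T$ corestricts to a bijective-on-objects functor $B \to (B_T)_\theta$; this functor together with $1_{B_T}$ constitutes a morphism in $\mathbf{Monads}_\kappa$ from $(B,T,\eta,\mu)$ to the monad that the abstract Kleisli structure of Example \ref{abstract kleisli structure on kleisli category} determines on $(B_T)_\theta$ via Proposition \ref{Proposition adjunction and induced comonad}, and it is invertible exactly when the corestriction is fully faithful, i.e.\ exactly under (3). Conversely, any isomorphism in $\mathbf{Monads}_\kappa$ exhibiting $T$ in the essential image must respect Kleisli categories and hence forces this corestriction to be invertible; the agreement of the two cases $\kappa \in \{\tau,\lambda\}$ follows from the commuting square of $2$-fully faithful functors in Proposition \ref{Proposition simplification of morphisms and 2-cells of abstract Kleisli structures}(4).

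It remains to connect the Eilenberg–Moore statement (4), which I would prove equivalent to (2) directly. The comparison $B \to (B^T)^{\bar{T}}$ sends $h : X \to Y$ to $Th$, regarded as a $\bar{T}$-coalgebra morphism between the free algebras $F^T X$ and $F^T Y$ with coalgebra structures given by $F^T\eta$; unwinding, a coalgebra morphism $F^T X \to F^T Y$ is a map $g : TX \to TY$ that is simultaneously an algebra morphism ($g\mu_X = \mu_Y \circ Tg$) and satisfies $T\eta_Y \circ g = Tg \circ T\eta_X$. Assuming (2), a short naturality computation shows $g\eta_X$ equalises $T\eta_Y$ and $\eta_{TY}$, hence factors uniquely as $\eta_Y \circ h$; since $g$ and $Th$ are then algebra morphisms out of the free algebra $F^T X$ agreeing after precomposition with $\eta_X$, they coincide, giving fullness, while faithfulness follows from $\eta_Y$ being monic. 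For the converse I would, given $k : Z \to TY$ equalising $T\eta_Y$ and $\eta_{TY}$, form its algebra-morphism transpose $\mu_Y \circ Tk$, check using the hypothesis on $k$ that it is a coalgebra morphism, apply fullness to write it as $Th$, and recover $k = \eta_Y \circ h$ via the monad law $\mu\eta_T = 1$. The hard part will be the load-bearing identification of thunkable with equalising morphisms in the first step, together with the bookkeeping needed to make the essential-image direction of (1) genuinely uniform in $\kappa$; once those are pinned down, the remaining implications are the routine hom-set manipulations sketched above, several of which already appear in \cite{Fuhrmann} for the strict case.
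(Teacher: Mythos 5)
Your proposal is correct, but it takes a genuinely different route from the paper. The paper's proof is essentially a citation argument: it quotes F\"uhrmann's Lemmas 5.2.7 and 5.2.8 for $(1) \iff (2)$ and $(2) \iff (3)$, quotes Barr--Wells (Corollary 7 and Theorem 9 of \cite{Toposes Triples and Theories}) for $(2) \iff (4)$, and disposes of $(4) \iff (5)$ with the one-line observation that the comparison $B \to (B^T)^{\bar{T}}$ lands in coalgebras whose underlying algebras are free. You instead give a self-contained proof organised around a single computational pivot --- the identification of thunkable Kleisli morphisms $f : X \to TY$ with those satisfying $T\eta_Y \circ f = \eta_{TY} \circ f$ --- and you use a different implication graph: $(2) \iff (3)$, $(3) \iff (5)$, $(3) \iff (1)$, $(2) \iff (4)$, linking $(5)$ to $(3)$ directly rather than to $(4)$, and $(1)$ to $(3)$ rather than to $(2)$. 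What your approach buys is transparency: the thunkable-equals-equalising identification is exactly the fact the paper relies on implicitly elsewhere (in the proof of Theorem \ref{Left adjoints to inclusion of abstract Kleisli on categories} and throughout the two-dimensional analogues), and your $(3) \iff (1)$ argument essentially reconstructs the unit $(J, 1_{B_T})$ of that later theorem; what the paper's approach buys is brevity and delegation to the literature. Two points in your sketch deserve flagging, though neither is fatal. First, for $(J, 1_{B_T})$ to be a morphism in $\mathbf{Monads}_\kappa$ at all, you need the Kleisli category of the induced monad $\bar{\bar{T}}$ on $(B_T)_\theta$ to be canonically isomorphic to $B_T$ itself (otherwise $1_{B_T}$ has the wrong codomain); this is true --- the comparison sends a thunkable $g : X \to T Y$ in $B_T$ to $\mu_Y \circ g$, with inverse $f \mapsto \eta_{TY} \circ f$ --- but it is a lemma, not a tautology, and is precisely the content the paper packages into Theorem \ref{Left adjoints to inclusion of abstract Kleisli on categories}(2). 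Second, in your $(4) \Rightarrow (2)$ direction you need uniqueness of the factorisation through $\eta_Y$, i.e.\ that $\eta_Y$ is monic; this follows from faithfulness of the comparison together with the split monicity of $T\eta_Y$ (if $\eta_Y h = \eta_Y h'$ then $Th = Th'$, hence $h = h'$), a step your sketch omits but which is routine.
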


\begin{proof}
	$(1) \iff (2)$ is Lemma 5.2.8 of \cite{Fuhrmann}. $(2) \iff (3)$ is Lemma 5.2.7 of \cite{Fuhrmann}. $(4) \iff (5)$ is clear since the image of any $X \in B$ under the canonical comparison $B \rightarrow {\left(B^T\right)}^{\bar{T}}$ is a coalgebra for $\bar{T}$ whose underlying algebra for $T$ is free, and is hence also a coalgebra for $\underline{T}$. $(2) \iff (4)$ is a standard result; see Corollary 7 and Theorem 9 of \cite{Toposes Triples and Theories}.
\end{proof}

\begin{theorem}\label{Left adjoints to inclusion of abstract Kleisli on categories}
	Let $\left(A, S\right)$ be a monad, let $\left(A_S, \bar{S}, F_{S}\eta\right)$ be the abstract Kleisli structure of Example \ref{abstract kleisli structure on kleisli category} and let $\bar{\bar{S}}$ be the monad induced on ${\left(A_S\right)}_{F_{S}\eta}$. Then 
	
	\begin{enumerate}
		\item $F_{S}: A \rightarrow A_S$ factorises through the left adjoint $F_{F_{S}\eta}: {\left(A_S\right)}_{F_{S}\eta} \rightarrow A_S$ via a functor $J$.
		\item $\left(J, 1_{A_{S}}\right): \left(A, S\right) \rightarrow \left({\left(A_{S}\right)}_{F_S \eta}, \bar{\bar{S}}\right)$ is a co-morphism of monads.
		\item The co-morphism of monads $\left(J, 1_{A_{S}}\right)$ has the universal property of a unit exhibiting that there are left adjoints to $I_{\kappa}: \mathbf{AbsKL}_{\kappa} \rightarrow \mathbf{Monads}_\kappa$ for $\kappa \in \{\tau, \lambda\}$.
	\end{enumerate}
\end{theorem}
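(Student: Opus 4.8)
The plan is to dispatch parts (1) and (2) quickly and then concentrate the effort on the universal property in part (3). For part (1), I would whisker the monad unit $\eta \colon 1_A \Rightarrow S$ by the Kleisli left adjoint to obtain a natural transformation $F_S\eta \colon F_S \Rightarrow \bar S F_S$ whose component at $X$ is the coalgebra structure $\theta_X = F_S\eta_X$ of Example \ref{abstract kleisli structure on kleisli category}. Its naturality square at $g \colon X \to Y$ reads $\theta_Y \circ F_S g = \bar S(F_S g)\circ \theta_X$, which is exactly the assertion that $F_S g$ is a morphism of $\bar S$-coalgebras, hence thunkable; so $F_S$ corestricts to an identity-on-objects functor $J \colon A \to (A_S)_{F_S\eta}$ with $F_{F_S\eta}\circ J = F_S$. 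For part (2), I would observe that the adjunction of Proposition \ref{Proposition adjunction and induced comonad} gives $(A_S)_{F_S\eta}(X, \bar{\bar S}Y)\cong A_S(X,Y)$ with $F_{F_S\eta}$ the identity on objects, so the Kleisli comparison functor $((A_S)_{F_S\eta})_{\bar{\bar S}} \to A_S$ is an isomorphism carrying $F_{\bar{\bar S}}$ to $F_{F_S\eta}$. Under this identification the Kleisli component of $(J, 1_{A_S})$ is the identity and the co-morphism condition becomes $F_S = F_{F_S\eta}\circ J$, which is part (1).

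For part (3) I would exhibit $(J, 1_{A_S})$ as the unit of a reflection. Since $I_\kappa$ is $2$-fully faithful (Proposition \ref{Proposition simplification of morphisms and 2-cells of abstract Kleisli structures} part (4)), it is enough to show that precomposition with $(J, 1_{A_S})$ is an isomorphism of hom-categories $\mathbf{Monads}_\kappa\big(I_\kappa(A_S,\bar S, F_S\eta),\, I_\kappa\mathcal K\big) \to \mathbf{Monads}_\kappa\big((A,S),\, I_\kappa\mathcal K\big)$, natural in the abstract Kleisli structure $\mathcal K = (B,Q,\theta)$. Given a co-morphism $(G,\bar G) \colon (A,S) \to I_\kappa\mathcal K$, its Kleisli component is a functor $\bar G \colon A_S \to (B_\theta)_{\bar{\bar Q}} \cong B$; because $J$ and $F_{F_S\eta}$ are the identity on objects and $F_{F_S\eta}$ is faithful, a co-morphism on the left side is determined by a lift of $\bar G$ to a functor $\tilde G \colon (A_S)_{F_S\eta}\to B_\theta$ between thunkable categories with $\tilde G J = G$. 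By Proposition \ref{Proposition simplification of morphisms and 2-cells of abstract Kleisli structures} part (1) this lift exists and is unique exactly when $\bar G$ preserves thunkability, so this preservation is the main obstacle.

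To clear it I would prove, for an arbitrary co-morphism $(G,\bar G)\colon(A,S)\to(B',T')$, that the Kleisli component preserves thunkability. I would pass to the natural transformation $\lambda \colon GS \Rightarrow T'G$ associated to the Kleisli extension $\bar G$, with $\lambda_Y = \widehat{\bar G\,\varepsilon_Y}$ for $\varepsilon_Y\colon\bar S Y\to Y$ the Kleisli counit, satisfying the unit coherence $\lambda_Y\circ G\eta_Y = \eta'_{GY}$. Unwinding the coalgebra condition (as in Remark \ref{comparison to fuhrmann}), a Kleisli morphism $f$ is thunkable iff $\hat f$ equalises $S\eta_Y$ and $\eta_{SY}$; computing $\widehat{\bar G f} = \lambda_Y\circ G\hat f$ and applying the unit coherence together with naturality of $\lambda$ and of $\eta'$, the composites $T'\eta'_{GY}\circ\widehat{\bar G f}$ and $\eta'_{T'GY}\circ\widehat{\bar G f}$ reduce to $T'(\lambda_Y)\circ\lambda_{SY}\circ G(S\eta_Y\circ\hat f)$ and $T'(\lambda_Y)\circ\lambda_{SY}\circ G(\eta_{SY}\circ\hat f)$ respectively, which coincide precisely because $\hat f$ is thunkable; hence $\bar G f$ is thunkable. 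Applying this with $(B',T') = (B_\theta,\bar{\bar Q})$ yields the lift $\tilde G$ and the bijection on $1$-cells. For $2$-cells, a loose $2$-cell is just a natural transformation of Kleisli components (Notation \ref{notation 2-categories of monads}), so precomposition with the identity Kleisli component of $(J, 1_{A_S})$ acts as the identity and the case $\kappa = \lambda$ is immediate; for $\kappa = \tau$ the compatibility $\bar\phi \circ F_S = F_{\bar{\bar Q}}\circ \phi$ exhibits every $\bar\phi_Y$ as lying in the image of $F_{\bar{\bar Q}}$ — using that $F_S$ is bijective on objects — hence as thunkable by part (1), and Proposition \ref{Proposition simplification of morphisms and 2-cells of abstract Kleisli structures} part (3) provides the unique tight lift. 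Naturality in $\mathcal K$ being routine, this gives the left adjoints to $I_\kappa$ for both $\kappa$.
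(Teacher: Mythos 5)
Your proposal is correct, and its overall skeleton matches the paper's: part (1) via naturality of $\eta$ whiskered by $F_S$, part (2) via the identification of the Kleisli category of $\bar{\bar{S}}$ with $A_S$, and part (3) by using $2$-fully-faithfulness of $I_\kappa$ to reduce the universal property to showing that precomposition with $\left(J, 1_{A_S}\right)$ is an isomorphism of hom-categories, with the loose $2$-cell case trivial and the tight case handled through Proposition \ref{Proposition simplification of morphisms and 2-cells of abstract Kleisli structures}. Where you genuinely diverge is at the crux, namely showing that the Kleisli component $\bar{G}$ preserves thunkability. The paper proves this by a diagram chase that is specific to co-morphisms whose codomain is the monad induced by an abstract Kleisli structure $\left(B, Q, \pi\right)$ on $B_\pi$: it uses the relation $\bar{G}F_S = F_\pi G$ together with the fact that each $Gf$ is itself a thunkable morphism of $B$ (since $G$ lands in $B_\pi$), chasing the $\pi$-naturality squares directly. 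You instead prove a strictly more general lemma: the Kleisli component of an \emph{arbitrary} co-morphism of monads $\left(G, \bar{G}\right): \left(A, S\right) \rightarrow \left(B', T'\right)$ preserves F\"uhrmann-thunkability, by passing to Street's monad-opfunctor presentation $\lambda: GS \Rightarrow T'G$ and giving a short equational argument from naturality of $\lambda$ and $\eta'$ plus the unit axiom $\lambda \circ G\eta = \eta' G$; your two reductions of $T'\eta'_{GY}\circ\widehat{\bar{G}f}$ and $\eta'_{T'GY}\circ\widehat{\bar{G}f}$ to a common form are correct. Your route buys generality and economy: it makes no use of the hypothesis that the components $Gf$ are thunkable, it shows that thunkability is preserved by every Kleisli extension (not only those into abstract Kleisli structures), and it replaces the paper's pasting chase by a calculation that is easy to verify. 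The only cost is a small bookkeeping step, which you leave implicit (as, in fairness, does the paper): under the isomorphism ${\left(B_\theta\right)}_{\bar{\bar{Q}}} \cong B$, the canonical thunkability notion on the Kleisli category must be identified with $\theta$-naturality in $B$, which follows from Proposition \ref{Proposition adjunction and induced comonad} since the induced comonad is $\left(Q, \varepsilon, \delta\right)$ and the unit of the adjunction is $\theta$ itself; relatedly, your appeal to ``part (1)'' to see that components in the image of $F_{\bar{\bar{Q}}}$ are thunkable is really an appeal to the construction of $B_\theta$ as the wide subcategory of thunkable morphisms, but the fact invoked is true and immediate.
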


\begin{proof}
	For part (1), to give $J$ is simply to note that $F_{S}\left(f: X \rightarrow Y\right)$ is always thunkable. Part (2) follows immediately from Part (1). For part (3), let $\left(B, T, \pi\right)$ be an abstract Kleisli category. We first consider the one-dimensional aspect of the universal property for $\left(J, 1_{A_S}\right)$ as a unit exhibiting a left adjoint to $I$. By faithfulness of the left adjoints and fully-faithfulness of $I$, it suffices to give a $H'$ as in the diagram below left, for which in turn it suffices to show that $H$ preserves thunkability. Preservation of thunkability can be seen by commutativity of the diagram below right. Note that $\bar{G}Sf = Gf$ for any morphism $f$ in $A$, and that $Gf$ is thunkable by assumption.
	\\
	\begin{tikzcd}[column sep = 20]
		A
		\arrow[rrrr,"G"]
		\arrow[rd,"J"']
		\arrow[dddd,"F_S"']
		&&&&
		B_{\pi}
		\arrow[dddd,"F_{\pi}"]
		\\
		&
		{\left(A_S\right)}_{{F_S}\eta}
		\arrow[rrru,dashed,"G'"']
		\arrow[lddd,"F_{{F_S}\eta}"]
		\\
		\\
		\\
		A_S
		\arrow[rrrr,"\bar{G}"']
		&&&&
		B&{}
	\end{tikzcd}\begin{tikzcd}[column sep = 15, row sep = 15, font=\fontsize{9}{6}]
		&&
		GSX
		\arrow[rrrr,"{\bar{G}}p"]
		\arrow[dd,"{\bar{G}}S\eta_X"]
		\arrow[llddd,bend right=20,"{\pi}_{GSX}"']
		&&&&
		GSY
		\arrow[rrddd,bend left=20,"{\pi}_{GSY}"]
		\arrow[dd,"{\bar{G}}S\eta_Y"']
		\\
		\\
		&&
		G{S^2}X
		\arrow[rrrr,"{\bar{G}}Sp"]
		\arrow[dd, "\pi_{GS^2 X}"]
		&&&&
		G{S^2}Y
		\arrow[dd, "\pi_{GS^2 Y}"']
		\\
		TGSX
		\arrow[rrd, "T\bar{G}S\eta_{X}"]
		\arrow[rrddd, bend right=20, "1_{TGX}"']
		&&&&&&&&
		TGSY
		\arrow[lld, "T\bar{G}S\eta_{Y}"']
		\arrow[llddd,bend left=20, "1_{TGSY}"]
		\\
		&&
		TG{S^2}X
		\arrow[rrrr, "TGSp"]
		\arrow[dd, "T\bar{G}\mu_{X}"]
		&&&&
		TG{S^2}Y
		\arrow[dd, "T\bar{G}\mu_{Y}"']
		\\
		\\
		&&
		TGSX
		\arrow[rrrr, "T\bar{G}p"']
		&&&&
		TGSY
	\end{tikzcd}
	
	\noindent The universal property holds trivially with respect to loose $2$-cells, as they are merely natural transformations between the Kleisli categories and do not need to be factorised. Finally for tight $2$-cells $\left(\phi, \bar{\phi}\right)$, it suffices to see that $\phi$ is natural with respect to thunkable morphisms in $A_S$. But this is true since $\bar{\phi}$ is natural with respect to all morphisms in $A_S$ and $F_{\pi}.\phi = \bar{\phi}.F_S$.
\end{proof}

\section{Categorified thunkability}\label{Section Abstract Kleisli 2-categories}

We now categorify the notion of abstract Kleisli structures and their categories of thunkable morphisms to the context of $2$-categories. As is expected in the process of categorification, thunkability in this context will be a property of $2$-cells but structure on $1$-cells.

\begin{definition}\label{definition pseudomonad}
	A \emph{pseudomonad} on a $2$-category $\A$ consists of a $2$-functor $S: \A \to \A$, two pseudonatural transformations $\eta: 1_\A \Rightarrow S$, $\mu: S^2 \Rightarrow S$, and three invertible modifications $\lambda: \mu.\eta_{S} \Rrightarrow 1_{S}$, $\alpha: \mu.S\mu \Rrightarrow \mu.\mu_{S}$ and $\rho: 1_{S} \Rrightarrow \mu.S\eta$, satisfying the coherences (1)-(5) as listed in Section 8 of \cite{Doctrines fully faithful adjoint string}. A \emph{pseudocomonad} is analogous, but with pseudonatural transformations $\varepsilon: S \Rightarrow 1_\A$ and $\delta: S \Rightarrow S^2$ in place of $\eta$ and $\mu$, respectively.
\end{definition}

\begin{definition}\label{Definition Abstract Kleisli Structure on a 2-category}
	Let $\mathcal{B}$ be a $2$-category. An abstract Kleisli structure $\left(Q, \theta\right)$ on $\mathcal{B}$ consists of
	
	\begin{itemize}
		\item A pseudocomonad $\left(Q, \varepsilon, \delta, \lambda,\alpha, \rho\right)$ on $\mathcal{B}$.
		\item A $2$-functor $\theta: \mathcal{B}_{0} \rightarrow \mathcal{B}^Q$ providing a lifting as in the following diagram, wherein $\B_{0}$ is the set of objects of $\B$ and $U^Q \dashv F^Q$ is the co-Eilenberg-Moore pseudoadjunction. 
		
		$$\begin{tikzcd}[column sep = 15, row sep = 15, font=\fontsize{9}{6}]
			\mathcal{B}_0
			\arrow[rr]
			\arrow[dddd,"Q_0"']
			&&
			\mathcal{B}
			\arrow[rr,"F^Q"]
			&&
			\mathcal{B}^Q
			\arrow[dddd,"U^Q"]
			\\
			\\
			\\
			\\
			\mathcal{B}_0
			\arrow[rrrruuuu,dashed,"\theta"]
			\arrow[rrrr]
			&&&&
			\mathcal{B}
		\end{tikzcd}$$
	\end{itemize}
	The data of $\theta \left(X\right)$ will have its structure map written as $\theta_{X}: X \rightarrow QX$, counitor written as $u_{X}: 1_{X} \Rightarrow \varepsilon_{X}.\theta_{X}$ and coassociator written as $m_{X}: \delta_{X}.\theta_{X} \Rightarrow Q\theta_{X}.\theta_{X}$.
\end{definition}

\noindent Note that although we use the subscript $X$ under $\theta$, the assignment does not extend to a pseudonatural transformation. Similarly, nothing can be said about how  $u$ and $m$ vary with $X$. However, the lifting condition says that $\theta_{QX} = \delta_{X}$, $u_{QX} = \rho_{X}$ and $m_{QX} = \alpha_{X}$.

\begin{example}\label{abstract kleisli structure on free pseudoalgebras}
	Let $\left(\mathcal{A}, S, \eta, \mu, \lambda, \alpha, \rho\right)$ be a pseudomonad. Then the $2$-category of free pseudoalgebras and pseudomorphisms inherits an abstract Kleisli structure. The pseudocomonad is the one induced by the evident pseudoadjunction while the pseudocoalgebra associated to $\left(SX, \mu_{X}\right)$ has structure map given by $\left(S\eta_{X}, \mu_{\eta_{X}}\right)$, counitor given by $\rho_{X}$ and coassociator given by $S\eta_{\eta_{X}}$.
\end{example}

\noindent In Proposition \ref{theta pseudonat 2-category from abstract kleisli structure}, to follow, we give the construction of the $2$-category of `morphisms equipped with thunkings, and thunkable $2$-cells' associated to an abstract Kleisli structure on a $2$-category. As anticipated, thunkability is a property of a $2$-cell but structure on a $1$-cell.

\begin{proposition}\label{theta pseudonat 2-category from abstract kleisli structure}
	(Appendix \ref{proof Proposition theta pseudonat 2-category from abstract kleisli structure}) Let $\mathcal{B}$ be a $2$-category equipped with an abstract Kleisli structure $\left(Q, \varepsilon, \delta, \theta\right)$, and let $\B_\theta$ denote the bijective on objects, $2$-fully faithful factorisation of $\theta: \B_{0} \to \B^Q$, as depicted below left. Then there is a pseudoadjunction as depicted below right in which $F$ is bijective on objects and faithful on $2$-cells. Moreover, the induced pseudocomonad on $\B$ is $(Q, \varepsilon, \delta, \lambda, \alpha, \rho)$.
	
	$$\begin{tikzcd}
		\B_{0} \arrow[r] & \B_\theta \arrow[r] & \B^Q
	\end{tikzcd}\qquad \begin{tikzcd}
	\B \arrow[rr, shift right = 2, "U"'] & \bot& \B_\theta \arrow[ll, shift right = 2, "F"']
\end{tikzcd}$$
\end{proposition}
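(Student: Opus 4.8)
The plan is to categorify the proof of Proposition~\ref{Proposition adjunction and induced comonad}, upgrading each strict equation there to a coherent invertible modification and tracking the pseudocoalgebra data $u$, $m$ together with the pseudocomonad coherences $\lambda$, $\alpha$, $\rho$ throughout. I would define $F$ to be the composite $\B_\theta \hookrightarrow \B^Q \xrightarrow{U^Q} \B$, exactly as $F_\theta$ was defined one dimension down. Since the objects of $\B_\theta$ are the objects $X$ of $\B$ equipped with the pseudocoalgebra structure $\theta(X) = (\theta_X, u_X, m_X)$, and $U^Q$ forgets this structure, sending $\theta(X) \mapsto X$, the $2$-functor $F$ is bijective on objects. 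It is faithful on $2$-cells because the inclusion $\B_\theta \hookrightarrow \B^Q$ is $2$-fully faithful by construction and $U^Q$ is faithful on $2$-cells, a $2$-cell of pseudocoalgebra morphisms being a $2$-cell in $\B$ that merely satisfies a compatibility condition rather than carrying extra data.

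Next I would build the right pseudoadjoint $U \colon \B \to \B_\theta$ as the cofree pseudocoalgebra assignment $X \mapsto (QX, \delta_X, \rho_X, \alpha_X)$, which lands in $\B_\theta$ precisely because the lifting condition records $\theta_{QX} = \delta_X$, $u_{QX} = \rho_X$ and $m_{QX} = \alpha_X$ (the remark following Definition~\ref{Definition Abstract Kleisli Structure on a 2-category}); on $1$-cells and $2$-cells $U$ acts through $Q$ together with the pseudonaturality data of $\delta$, the composition axiom for $\delta$ guaranteeing functoriality. The counit of the pseudoadjunction is the pseudocomonad counit $\varepsilon \colon FU = Q \Rightarrow 1_\B$, while the unit has component at $(X, \theta_X)$ given by $\theta_X$ itself, which is a morphism of $\B_\theta$ because its coassociator $m_X$ exhibits $\theta_X \colon (X, \theta_X) \to (QX, \delta_X)$ as a pseudocoalgebra morphism. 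Crucially, pseudonaturality of this unit with respect to morphisms of $\B_\theta$, and its modification compatibility for $2$-cells of $\B_\theta$, hold by the very definition of $\B_\theta$: these coherences are exactly the structure carried by a morphism of pseudocoalgebras and the condition imposed on a $2$-cell of pseudocoalgebras.

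The two invertible triangle modifications are then assembled from the data at hand. At $(X, \theta_X)$ the first composite contracts to the identity via the counitor $u_X \colon 1_X \Rightarrow \varepsilon_X . \theta_X$, while at $X$ the second composite, whose underlying $1$-cell is $Q\varepsilon_X . \delta_X$, contracts to the identity via the pseudocomonad counit coherence trivialising $Q\varepsilon_X . \delta_X$. To identify the induced pseudocomonad I would then compute $FU = Q$ on the nose, read off the counit as $\varepsilon$, and observe that the induced comultiplication at $X$ is $F$ applied to the unit component at $UX$, namely $\theta_{QX} = \delta_X$, again by the lifting condition; the residual coherence modifications are matched with $\lambda$, $\alpha$ and $\rho$ using $u_{QX} = \rho_X$ and $m_{QX} = \alpha_X$.

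The main obstacle, and the reason the full argument is relegated to Appendix~\ref{proof Proposition theta pseudonat 2-category from abstract kleisli structure}, is the verification of the swallowtail coherence axioms for the pseudoadjunction and the matching of the induced pseudocomonad's coherence modifications against $(\lambda, \alpha, \rho)$. Each of these reduces to a large pasting-diagram identity into which the pseudocoalgebra axioms relating $u_X$ and $m_X$ to the pseudocomonad coherences must be fed; the conceptual point is that these axioms are exactly what is required, so no input beyond Definition~\ref{Definition Abstract Kleisli Structure on a 2-category} is needed.
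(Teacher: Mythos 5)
Your proposal is correct and follows essentially the same route as the paper's appendix proof: the forgetful $2$-functor $F$ (bijective on objects, faithful on $2$-cells), the cofree right pseudoadjoint $U$ sending $f$ to $(Qf,\delta_f)$, the unit given by $\theta$ with pseudonaturality components $\theta_f$ and thunking $m_X$, the counit $\varepsilon$, the triangle modifications built from $u_X$ and a pseudocomonad counit coherence, and the swallowtail (the paper says ``tetrahedral'') identities reduced to the pseudocoalgebra unit law and coherence 3 for $(\mathcal{B},Q)$. The identification of the induced pseudocomonad via the lifting equations $\theta_{QX}=\delta_X$, $u_{QX}=\rho_X$, $m_{QX}=\alpha_X$ likewise matches the paper's argument.
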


\begin{definition}
	When $\left(f, \theta_{f}\right)$ is a morphism of $\mathcal{B}_{\theta}$, $\theta_{f}$ will be called a \emph{thunking} of $f$. In this case $f$ may be referred to as a thunked morphism and the pair $\left(f, \theta_{f}\right)$ will be called a morphism equipped with a thunking. The $2$-cells in $\mathcal{B}_\theta$ will be called \emph{thunkable}.
\end{definition}

\begin{proposition}\label{base to theta pseudonat}
	Let $\left(\mathcal{B}, T\right)$ be a pseudomonad and consider the $2$-category $\left({\B}_{T}\right)_{\theta}$ formed by applying the construction of Proposition \ref{theta pseudonat 2-category from abstract kleisli structure} on the abstract Kleisli structure described in Example \ref{abstract kleisli structure on free pseudoalgebras}. Then the left pseudoadjoint $\mathcal{B} \rightarrow {\B}_{T}$ factorises through the left pseudoadjoint $\left({\B}_{T}\right)_{\theta} \rightarrow {\B}_{T}$ via a $2$-functor $J: \mathcal{B} \rightarrow \left({\B}_{T}\right)_{\theta}$. 
\end{proposition}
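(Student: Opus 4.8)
The plan is to obtain $J$ by lifting the Kleisli left pseudoadjoint $F_T: \B \to \B_T$ along the left pseudoadjoint $F: (\B_T)_\theta \to \B_T$ of Proposition \ref{theta pseudonat 2-category from abstract kleisli structure}, exploiting that this $F$ is bijective on objects and faithful on $2$-cells. On objects $J$ is forced to agree with $F_T$, since $F$ is a bijection on objects. The requirement $FJ = F_T$ then pins the underlying Kleisli $1$-cell of $J(g)$ to be $F_T g$ and the underlying Kleisli $2$-cell of $J(\beta)$ to be $F_T\beta$. Because the $1$-cells of $(\B_T)_\theta$ lying over a fixed Kleisli morphism are precisely its thunkings, and because $F$ is faithful on $2$-cells, the whole content of the proposition reduces to two assertions: that each $F_T g$ admits a thunking, chosen so as to be strictly functorial, and that each $F_T\beta$ is thunkable with respect to these choices. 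This is the two-dimensional analogue of part (1) of Theorem \ref{Left adjoints to inclusion of abstract Kleisli on categories}, where the corresponding fact was simply that $F_S f$ is always thunkable.

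To construct the thunking of $F_T g$, recall from Example \ref{abstract kleisli structure on free pseudoalgebras} that the pseudocoalgebra structure map at the free pseudoalgebra on $X$ is $\theta_X = (T\eta_X, \mu_{\eta_X})$, with counitor $\rho_X$ and coassociator $T\eta_{\eta_X}$. Writing the underlying $1$-cells of $F_T g$ and of $\theta_X$ as $Tg$ and $T\eta_X$, the two composites $Q(F_T g)\cdot\theta_X$ and $\theta_Y\cdot F_T g$ have underlying $1$-cells $T(Tg\cdot\eta_X)$ and $T(\eta_Y\cdot g)$ respectively. In the one-dimensional setting these agreed on the nose, by naturality of $\eta$; here they are compared by the invertible $2$-cell $T\eta_g$ obtained by applying the $2$-functor $T$ to the pseudonaturality $2$-cell $\eta_g: Tg\cdot\eta_X \Rightarrow \eta_Y\cdot g$, and this $T\eta_g$ is the candidate thunking $\theta_{F_T g}$. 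Checking that it is a genuine pseudomorphism of pseudocoalgebras --- that is, that it is compatible with the counitors $\rho$ and the coassociators $T\eta_\eta$ --- is exactly where the pseudomonad coherence axioms (1)--(5) of Definition \ref{definition pseudomonad} are invoked, via the pasting computations carried out in \cite{Miranda PhD}.

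It then remains to check strict functoriality of these thunkings and thunkability on $2$-cells. Strict $2$-functoriality of $J$ on $1$-cells amounts to the identities expressing $\theta_{F_T(g'g)}$ as the composite of $\theta_{F_T g'}$ and $\theta_{F_T g}$, and $\theta_{F_T 1_X}$ as an identity; since $F_T$ is a strict $2$-functor and the chosen thunkings are the $T\eta_{(-)}$, these reduce to the composition and unit axioms of the pseudonatural transformation $\eta$, applied under $T$. For a $2$-cell $\beta: g\Rightarrow g'$ in $\B$, faithfulness of $F$ on $2$-cells makes $J(\beta)$ the unique lift of $F_T\beta = T\beta$ if one exists, and existence is precisely the thunkability of $T\beta$; this follows from the $2$-cell (modification-type) axiom of $\eta$ applied to $\beta$ and pushed through $T$. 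Preservation of vertical and whiskered composites of $2$-cells, and the factorisation $FJ = F_T$, then hold by construction together with faithfulness of $F$ on $2$-cells, which lets each such equation be verified downstairs in $\B_T$.

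The main obstacle is the coherence verification underlying the second paragraph: establishing that $T\eta_g$ satisfies the two pseudomorphism-of-pseudocoalgebra axioms relative to $\rho$ and $T\eta_\eta$, and that the assignment is \emph{strictly} functorial rather than merely pseudofunctorial, as is required for $J$ to be a genuine $2$-functor. Strictness is what makes the faithfulness of $F$ on $2$-cells so effective: once the underlying Kleisli $1$- and $2$-cells are fixed by the strict $2$-functor $F_T$, each required identity among thunkings becomes an equality of $2$-cells in $\B_T$, with no residual choices. I therefore expect the substance of the argument to consist entirely of the pasting diagrams relating $\eta_g$, $\rho$ and $T\eta_\eta$ to the pseudomonad axioms of Definition \ref{definition pseudomonad}, which is why they are deferred to \cite{Miranda PhD}.
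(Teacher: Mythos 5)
Your proposal is correct and takes essentially the same route as the paper: the thunking of $J(g)$ is defined to be $T\eta_{g}$, the unit/coassociativity axioms for this thunking are deferred as coherence verifications, and thunkability of $J(\beta)$ is deduced from pseudonaturality of $T\eta$ on $\beta$, with the factorisation $FJ=F_T$ holding by construction. The only discrepancies are minor: the paper also records the preliminary check that $T\eta_{g}$ is a valid $2$-cell of pseudoalgebras (justified by pseudonaturality of $\mu$ on $\eta_{g}$), a step you fold into the pseudocoalgebra-morphism condition without naming it, and the paper attributes the two thunking axioms to the modification coherences for $\rho$ and $\eta_{\eta}$ at $g$ rather than to the pseudomonad coherence axioms (1)--(5).
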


\begin{proof}
	The morphism $J(f: X \rightarrow Y)$ has thunking given by the $2$-cell $T\eta_{f} \in {\B}_{T}$. That this is well-defined as a $2$-cell of pseudoalgebras follows from pseudonaturality of $\mu$ on $\eta_{f}$. That $T\eta_{f}$ does indeed equip $\left(Tf, \mu_{f}\right)$ with a well-defined thunking follows from the modification coherences for $\rho$ and $\eta_{\eta}$ on $f$. Finally, pseudonaturality of $T\eta$ on $\phi: f \Rightarrow g$ ensures that $J\phi$ is well-defined as a $2$-cell in $\left({\B}_{T}\right)_{\theta}$.
\end{proof}

\section{The isobidescent condition}\label{Section the isobidescent condition}

\noindent We will show in Theorem \ref{J a unit for 2 abstract kleisli structures} that $J$ is a suitable unit exhibiting abstract Kleisli structures on $2$-categories as objects of a full reflective sub-$\mathbf{Gray}$-category of $\mathbf{KLExt}\left(\mathbf{Gray}\right)$. Before doing this we describe in Theorem \ref{Two dimensional Abstract Kleisli equivalent characterisations} certain properties pseudomonads might have which are equivalent to $J$ being a biequivalence. These conditions can be seen as categorifications of those in Theorem \ref{Conditions characterising monads whose EM adjunctions are of codescent type} from the setting of monads to the setting of pseudomonads. In the monads setting, one of these conditions is that $\eta$ is the equaliser of $T\eta$ and $\eta_{T}$. As we will show, in the context of pseudomonads we also have a limit condition characterising those pseudomonads which correspond to abstract Kleisli structures on $2$-categories. However, this equaliser condition is now replaced with the requirement that $\left(1_{T}, \eta, \eta_{\eta}\right)$ exhibit $1_\B$ as an isobidescent object. We recall this notion in the definition below.  

\begin{definition}\label{unit bidescent def}
	Given a pseudomonad $\left(\mathcal{B}, T\right)$ and $X, Y \in \mathcal{B}$, define the category of \emph{descent cones} from $X$ to $Y$ to have objects consisting of data of the form $\left(g, \bar{g}\right)$ where \begin{itemize}
		\item $g: X \rightarrow TY$ is a $1$-cell.
		\item $\bar{g}: \eta_{TY}.g \Rightarrow T\eta_{Y}.g$ is an invertible $2$-cell.
		\item (Unit condition) The following pasting an identity.
		
		\begin{tikzcd}[font=\fontsize{9}{6}, row sep = 12]
			&&TY
			\arrow[rrdd, "\eta_{TY}"]
			\arrow[dddd, Rightarrow, "\bar{g}", shorten = 20]
			\arrow[rrrrdd, "1_{TY}", bend left]
			&&{}\arrow[dd, Rightarrow, "\lambda_{Y}^{-1}", shorten = 8]
			\\
			\\
			X\arrow[rruu, "g"]\arrow[rrdd, "g"']
			&&&&T^2Y
			\arrow[rr, "\mu_{Y}"]
			\arrow[dd, Rightarrow, "\rho_{Y}^{-1}", shorten = 10]
			&&TY
			\\
			\\
			&&TY
			\arrow[rruu, "T\eta_{Y}"']
			\arrow[rrrruu, "1_{TY}"', bend right]
			&&{}
		\end{tikzcd}
		\item (Cocycle condition) The following equation holds.
		\\
		
		\begin{tikzcd}[column sep = 18, font=\fontsize{9}{6}]
			&&TY
			\arrow[rrdd, "\eta_{TY}"]
			\arrow[dddd, Rightarrow, "\bar{g}", shorten = 30]
			\arrow[rr, "T\eta_{Y}"]
			&&T^2 Y
			\arrow[dd, Rightarrow, "T\eta_{\eta_{Y}}", shorten = 10]
			\arrow[rrdd, "T^2 \eta_{Y}"]
			\\
			\\
			X\arrow[rruu, "g"]\arrow[rrdd, "g"']
			&&&&T^2Y
			\arrow[rr, "T\eta_{TY}"]
			\arrow[dd, Rightarrow, "\eta_{\eta_{TY}}", shorten = 10]
			&&T^3Y&=
			\\
			\\
			&&TY
			\arrow[rruu, "T\eta_{Y}"']
			\arrow[rr, "\eta_{TY}"']
			&&T^2Y\arrow[rruu, "\eta_{T^2Y}"']
		\end{tikzcd}\begin{tikzcd}[column sep = 18, font=\fontsize{9}{6}]
			&&TY
			\arrow[dd, Rightarrow, "\bar{g}^{-1}", shorten = 10]
			\arrow[rr, "T\eta_{Y}"]
			&&T^2 Y
			\arrow[dddd, Rightarrow, "\eta_{T\eta_{Y}}", shorten = 10]
			\arrow[rrdd, "T^2 \eta_{Y}"]
			\\
			\\
			X
			\arrow[rruu, "g"]
			\arrow[rrdd, "g"']
			\arrow[rr, "g"]
			&&TY
			\arrow[dd, Rightarrow, "\bar{g}^{-1}", shorten = 10]
			\arrow[rruu, "\eta_{TY}"]
			\arrow[rrdd, "T\eta_{Y}"]
			&&
			&&T^3Y
			\\
			\\
			&&TY
			\arrow[rr, "\eta_{TY}"']
			&&T^2Y\arrow[rruu, "\eta_{T^2Y}"']
		\end{tikzcd}
	\end{itemize}
	Morphisms $\phi: \left(g, \bar{g}\right) \rightarrow \left(h, \bar{h}\right)$ given by $2$-cells $\phi: g \Rightarrow h$ in $A$ satisfying the equation depicted below, with composition given by vertical composition in $\mathcal{B}$.
	
	$$\begin{tikzcd}[font=\fontsize{9}{6}, row sep = 15]
		&&
		TY
		\arrow[rrdd,"{\eta}_{TY}"]
		\arrow[dddd,shorten=25,Rightarrow,"\bar{h}"]
		\\
		\\
		X
		\arrow[rruu,bend left,"g" name=A]
		\arrow[rruu,bend right,"h"' name=B]
		\arrow[rrdd,bend right,"h"']
		&&&&
		{T^2}Y&=
		\\
		\\
		&&
		TY
		\arrow[rruu,"T{\eta}_{Y}"']
		\arrow[from=A,to=B,"\phi",Rightarrow,shorten=8]
	\end{tikzcd}\begin{tikzcd}[font=\fontsize{9}{6}, row sep = 15]
		&&
		TY
		\arrow[rrdd,"{\eta}_{TY}"]
		\arrow[dddd,shorten=25,Rightarrow,"\bar{g}"]
		\\
		\\
		X
		\arrow[rruu,bend left,"g"]
		\arrow[rrdd,bend left,"g" name=A]
		\arrow[rrdd,bend right,"h"' name=B]
		&&&&
		{T^2}Y
		\\
		\\
		&&
		TY
		\arrow[rruu,"T{\eta}_{Y}"']
		\arrow[from=A,to=B,"\phi",Rightarrow,shorten=8]
	\end{tikzcd}$$
	
	\noindent This category will be denoted as $\mathbf{Cone}_{T}\left(X, Y\right)$. We will say that $\left(\mathcal{B}, T\right)$ satisfies \emph{isobidescent} if the canonical functor $\left(\eta \circ -, \eta_\eta \circ - \right): \mathcal{B}\left(X, Y\right) \rightarrow \mathbf{Cone}_{T}\left(X, Y\right)$ which sends $g$ to $\left(\eta_{Y}.g, \eta_{\eta_{Y}}.g\right)$ is an equivalence of categories.
\end{definition}

\noindent Note that $\left(\mathcal{B}, T\right)$ satisfies the isobidescent condition precisely if for every object $Y$ the data $\left(Y, \eta_{Y}, \eta_{\eta_{Y}}\right)$ present $Y$ as a bicategorical version of a descent object. Recall that bilimits have universal properties which hold up to pseudonatural biequivalence. This particular bilimit is dual to the one featuring in the monadicity theorem for pseudomonads \cite{Beck's Theorem For Pseudomonads}.

\section{Some intermediate results}\label{Section some intermediate calculations}

\noindent We wish to show that the $2$-functor $J$ of Proposition \ref{base to theta pseudonat} is a biequivalence if and only if $\left(\mathcal{B}, T\right)$ satisfies isobidescent. Our route towards this will be as follows.

\begin{itemize}
	\item In Proposition \ref{cones to theta pseudonat functor} we will describe functors displayed below.
	$$\underline{J}_{X, Y}: \mathbf{Cone}_{T}\left(X, Y\right) \rightarrow \left({\B}_{T}\right)_{\theta}\big(\left(TX, \mu_{X}\right), \left(TY, \mu_{Y}\right)\big)$$
	\item In Proposition \ref{cones to free theta pseudonat an equivalence} we will prove that these functors are equivalences.
	\item In Proposition \ref{G isomorphic to restriction along eta} we will show that there are natural isomorphisms $\underline{J}_{X, Y}.\left(\eta \circ -, \eta_{\eta}\circ -\right) \cong J_{X, Y}$.
\end{itemize}

\noindent The result will then follow in Theorem \ref{Two dimensional Abstract Kleisli equivalent characterisations} by the two-out-of-three property for equivalences of categories. We begin by describing the thunking $2$-cell of $\underline{J}_{X, Y}\left(g, \bar{g}\right)$ and proving that it is a $2$-cell of free pseudoalgebras.

\begin{lemma}\label{theta of Gg}
	(Appendix \ref{proof Lemma theta of Gg}) Let $\left(\mathcal{B}, T\right)$ be a pseudomonad and $\left(g, \bar{g}\right) \in \mathbf{Cone}_{T}\left(X, Y\right)$. Then the pasting in the $2$-category $\mathcal{B}$ depicted below left is a $2$-cell of free $T$-pseudoalgebras as depicted below right.
	
	$$\begin{tikzcd}[column sep = 15, font=\fontsize{9}{6}]
		&&{T^2}Y
		\arrow[rr,"{\mu}_Y"]
		\arrow[rrdd,"{T^2}{\eta}_Y" description]
		\arrow[dd,Leftarrow,shorten=10,"T{\bar{g}}"']
		&&
		TY
		\arrow[rrdd,"T{\eta}_Y"]
		\arrow[dd,Leftarrow,shorten=10,"{\mu}_{{\eta}_Y}"]
		&
		{}
		\\
		\\
		TX
		\arrow[rruu,"Tg"]
		\arrow[rr,"Tg" description]
		\arrow[rrdd,"T{\eta}_X"']
		&&
		{T^2}Y
		\arrow[rr,"T{\eta}_{TY}" description]
		\arrow[rrrr,bend right = 42,"{1}_{{T^2}Y}" description]
		\arrow[rrdd,"T{\eta}_{TY}"description]
		\arrow[dd,Leftarrow,shorten=10,"T{\eta}_g"']
		&&
		{T^3}Y
		\arrow[rr,"{\mu}_{TY}" description]
		\arrow[d,Leftarrow,"{\rho}_{TY}",shorten=3]
		&&
		{T^2}Y&{}
		\\
		&&&&
		{}
		\arrow[d,Leftarrow,"T{\lambda}_Y",shorten=3]
		\\
		&&
		{T^2}X
		\arrow[rr,"{T^2}g"']
		&&
		{T^3}Y
		\arrow[rruu,"T{\mu}_Y"']
	\end{tikzcd}\begin{tikzcd}[column sep = 15, font=\fontsize{9}{6}]
		\left(TX{,}{\mu}_X\right)
		\arrow[rr,"\left(Tg{,}{\mu}_g\right)"]
		\arrow[dd,"\left(T{\eta}_X{,}{\mu}_{{\eta}_X}\right)"description]
		&&
		\left({T^2}Y{,}{\mu}_{TY}\right)
		\arrow[rr,"\left({\mu}_Y{,}{\alpha}_Y\right)"]
		\arrow[dd,shorten=10,Leftarrow]
		&&
		\left(TY{,}{\mu}_Y\right)
		\arrow[dd,"\left(T{\eta}_Y{,}{\mu}_{{\eta}_Y}\right)" description]
		\\
		\\
		\left({T^2}X{,}{\mu}_{TX}\right)
		\arrow[rr,"\left({T^2}g{,}{\mu}_{Tg}\right)"']
		&&
		\left({T^3}Y{,}{\mu}_{{T^2}Y}\right)
		\arrow[rr,"\left(T{\mu}_Y{,}{\mu}_{{\mu}_Y}\right)"']
		&&
		\left({T^2}Y{,}{\mu}_{TY}\right)
	\end{tikzcd}$$
\end{lemma}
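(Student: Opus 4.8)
The plan is to verify that the $2$-cell $\Phi$ presented by the left-hand pasting satisfies the single coherence condition required of a $2$-cell between pseudomorphisms of $T$-pseudoalgebras, taken with respect to the source action $\mu_X$ on $\left(TX, \mu_X\right)$ and the target action $\mu_{TY}$ on $\left(T^2Y, \mu_{TY}\right)$. First I would record the two boundary pseudomorphisms of the right-hand square, namely the top-then-right composite $\left(T\eta_Y, \mu_{\eta_Y}\right)\circ\left(\mu_Y, \alpha_Y\right)\circ\left(Tg, \mu_g\right)$ and the left-then-bottom composite $\left(T\mu_Y, \mu_{\mu_Y}\right)\circ\left(T^2 g, \mu_{Tg}\right)\circ\left(T\eta_X, \mu_{\eta_X}\right)$, observing that their underlying $1$-cells $T\eta_Y\cdot\mu_Y\cdot Tg$ and $T\mu_Y\cdot T^2 g\cdot T\eta_X$ are precisely the source and target of $\Phi$ in $\mathcal{B}$. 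Expanding the two composite structure $2$-cells as pastings of the pseudonaturality cells of $\mu$ named in the labels, together with $\alpha_Y$, turns the claim into a single pasting equation in $\mathcal{B}$.

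Rather than attack that equation head-on, I would exploit the way $\Phi$ is assembled from exactly five cells. Three of them, $T\bar{g}$, $T\eta_g$ and $T\lambda_Y$, are images under $T$ of $2$-cells of $\mathcal{B}$, and for any $\gamma\colon k\Rightarrow k'$ the cell $T\gamma$ is automatically a $2$-cell of free pseudoalgebras between the free pseudomorphisms $\left(Tk, \mu_k\right)$ and $\left(Tk', \mu_{k'}\right)$, by pseudonaturality of $\mu$. The remaining two are the pseudonaturality cell $\mu_{\eta_Y}$ of $\mu$ and the component $\rho_{TY}$ of the modification $\rho$. The key sublemma is that these are $2$-cells of free pseudoalgebras as well: for $\mu_{\eta_Y}$ this is the compatibility of the pseudonaturality of $\mu$ with the associativity modification $\alpha$, and for $\rho_{TY}$ it is one of the pseudomonad coherences of Definition \ref{definition pseudomonad}. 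Granting this, $\Phi$ is a pasting of $2$-cells of the $2$-category $\mathcal{B}_T$ of free pseudoalgebras and pseudomorphisms (Example \ref{abstract kleisli structure on free pseudoalgebras}, Proposition \ref{theta pseudonat 2-category from abstract kleisli structure}), hence again a $2$-cell of free pseudoalgebras; it then only remains to confirm that its total source and target, together with their composite structure cells, coincide with the two boundary pseudomorphisms identified above.

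The main obstacle is this sublemma on $\mu_{\eta_Y}$ and $\rho_{TY}$ and, more delicately, the reconciliation of structure cells along the shared boundaries of the decomposition: whiskering $T\bar{g}$ against the action produces a configuration of iterated $\eta$-naturality cells acting on $\bar{g}$, and this is exactly the shape governed by the unit and cocycle conditions satisfied by $\left(g, \bar{g}\right)$ as an object of $\mathbf{Cone}_T\left(X, Y\right)$ in Definition \ref{unit bidescent def}. I therefore expect the cocycle condition to be the one nontrivial input, applied once, with everything else being formal manipulation via the interchange law and the pseudomonad coherences. Should the decomposition prove awkward to align, the fallback is the direct route: normalise both sides of the coherence equation using pseudonaturality of $\eta$ and $\mu$ until the dependence on $\bar{g}$ is isolated, and then invoke the cone conditions to finish.
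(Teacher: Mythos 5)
Your decomposition strategy is sound and, modulo organisation, amounts to the same computation as the paper's proof, which is a direct pasting chase verifying the single pseudoalgebra-compatibility equation in five moves: the modification coherence for $\alpha$ at $\eta_{Y}$, pseudonaturality of $\mu$ on $\bar{g}$, coherence $5$ for the pseudomonad, and pseudonaturality of $\mu$ on $\lambda_{Y}$ and on $\eta_{g}$. These five moves correspond one-to-one to your five generators: the three free cells $T\bar{g}$, $T\eta_{g}$, $T\lambda_{Y}$ are handled by the $2$-cell axiom of the pseudonatural transformation $\mu$ (your correct observation that $T\gamma$ is always a $2$-cell of free pseudoalgebras), your sublemma for $\mu_{\eta_{Y}}$ is precisely the modification axiom for $\alpha$ at $\eta_{Y}$, and your sublemma for $\rho_{TY}$ is what the paper invokes as coherence $5$. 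Your modular packaging --- each generator lifts to a $2$-cell of free pseudoalgebras, and such $2$-cells are closed under pasting --- is a legitimate and arguably cleaner way to run the argument. The reconciliation of structure cells along internal edges that you flag as delicate is in fact formal: for pseudonatural transformations between $2$-functors the constraint at a composite $1$-cell is strictly equal to the pasting of the constraints, so all intermediate pseudomorphism structures match and the whole pasting lifts.

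Where you go wrong is in expecting the cocycle (or unit) condition on $\left(g, \bar{g}\right)$ to be ``the one nontrivial input.'' Neither condition is needed anywhere in this lemma: $\bar{g}$ enters only through the $2$-cell axiom of $\mu$, which holds for an arbitrary invertible $2$-cell $\eta_{TY}.g \Rightarrow T\eta_{Y}.g$, and the paper's proof confirms this --- the lemma is true for any such $\bar{g}$, cone or not. The cone conditions are consumed one step later, in Proposition \ref{cones to theta pseudonat functor}, where one checks that the $2$-cell produced here is a well-defined thunking: the unit condition of the thunking uses the unit condition of the cone, and the associativity condition uses the cocycle condition. If you hunt inside this lemma for a place to apply the cocycle condition you will not find one, and your fallback plan (``normalise \ldots then invoke the cone conditions to finish'') is off-target for the same reason: the direct normalisation closes up using only the pseudomonad coherences and the pseudonaturality of $\mu$.
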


\begin{proposition}\label{cones to theta pseudonat functor}
	(Appendix \ref{proof Proposition cones to theta pseudonat functor}) Let $\left(\mathcal{B}, T\right)$ be a pseudomonad. There is a functor $\underline{J}: \mathbf{Cone}_{T}\left(X, Y\right) \rightarrow \left({\B}_{T}\right)_{\theta}\big(\left(TX, \mu_{X}\right), \left(TY, \mu_{Y}\right)\big)$ which sends $\left(g, \bar{g}\right)$ to the $1$-cell whose underlying pseudomorphism is given by $\left(\mu_{Y}, \alpha_{y}\right)\circ\left(Tg, \mu_{g}\right)$ and whose thunking $2$-cell is described in Lemma \ref{theta of Gg}.
\end{proposition}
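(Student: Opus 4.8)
The plan is to check, in turn, that the object assignment lands in the stated hom-category, that the morphism assignment does too, and that both respect identities and vertical composition. The underlying $1$-cell $\left(\mu_{Y}, \alpha_{Y}\right)\circ\left(Tg, \mu_{g}\right)$ is a pseudomorphism of free pseudoalgebras $\left(TX, \mu_{X}\right) \to \left(TY, \mu_{Y}\right)$ simply because it is a composite of two such, and Lemma \ref{theta of Gg} already guarantees that the prescribed thunking is a legitimate $2$-cell of free pseudoalgebras. What remains for the object assignment is therefore to verify that this thunking satisfies the two coherence axioms that make the pair into a pseudomorphism of pseudocoalgebras for the induced pseudocomonad $Q$ on $\mathcal{B}_{T}$, where the coalgebra structures on source and target are those supplied by Example \ref{abstract kleisli structure on free pseudoalgebras}, namely $\left(T\eta_{X}, \mu_{\eta_{X}}\right)$ and $\left(T\eta_{Y}, \mu_{\eta_{Y}}\right)$.

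These two axioms are exactly where the defining conditions on a descent cone get consumed. I would show that the counit axiom for the pseudocoalgebra morphism collapses, after whiskering with $\varepsilon$ and cancelling the unitor modifications $\lambda$ and $\rho$, to precisely the unit condition imposed on $\left(g, \bar{g}\right)$ in Definition \ref{unit bidescent def}; and that the comultiplication axiom collapses, after expanding $\delta$ and the coassociators of type $T\eta_{\eta}$, to the cocycle condition. In both cases the reduction proceeds by pasting $T\bar{g}$ (the only place $\bar{g}$ enters the thunking of Lemma \ref{theta of Gg}) against the pseudonaturality constraints of $\eta$ and $\mu$ together with $\lambda, \alpha$ and $\rho$, and then invoking their modification coherences to rewrite the diagram as the corresponding side of the cone condition.

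For a morphism $\phi\colon \left(g, \bar{g}\right) \to \left(h, \bar{h}\right)$, which is a $2$-cell $\phi\colon g \Rightarrow h$ in $\mathcal{B}$, the functor sends $\phi$ to the whiskering of $T\phi$ by $\left(\mu_{Y}, \alpha_{Y}\right)$, whose underlying datum is $\mu_{Y}.T\phi$. This is a $2$-cell of free pseudoalgebras because $T\phi$ respects the structure constraints $\mu_{g}, \mu_{h}$, which is part of pseudonaturality of $\mu$; and I would then check that it is moreover thunkable, that is, compatible with the two thunkings produced above, by showing that this compatibility is literally the morphism equation that $\phi$ is required to satisfy in $\mathbf{Cone}_{T}\left(X, Y\right)$. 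Preservation of identities and of vertical composites is immediate from $2$-functoriality of $T$ and the interchange law, since the operation $\mu_{Y}.T\left(-\right)$ carries identity $2$-cells to identities and vertical composites to vertical composites.

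I expect the main obstacle to be the comultiplication coherence in the object assignment, as it involves the largest pasting and the most delicate bookkeeping: one must simultaneously track $T\bar{g}$, the naturality constraints $T\eta_{g}$ and $\mu_{\eta_{Y}}$, and the modifications $\rho_{TY}$ and $T\lambda_{Y}$ appearing in the thunking of Lemma \ref{theta of Gg}, and manoeuvre them so that the two resulting pastings coincide with the two sides of the cocycle condition. The counit axiom and the thunkability of $\underline{J}\phi$ are entirely analogous but shorter, and the functoriality clauses are routine.
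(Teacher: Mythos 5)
Your proposal is correct and follows essentially the same route as the paper: the paper likewise takes the underlying pseudomorphism and the $2$-cell of Lemma \ref{theta of Gg} as given, verifies the unit (counit) axiom of the thunking by a pasting chase that consumes the cone's unit condition together with pseudomonad coherences and the modification coherence for $\rho$, handles the associativity axiom (which consumes the cocycle condition) by a similar but longer chase deferred to the thesis, and gets thunkability of $\underline{J}\phi$ from the morphism condition in $\mathbf{Cone}_{T}\left(X,Y\right)$, with functoriality being routine. The only small correction is that thunkability of $\underline{J}\phi$ is not \emph{literally} the cone-morphism equation: as the paper notes, one also needs pseudonaturality of $T\eta$ on $\phi$ to complete that step.
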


\begin{lemma}\label{cones to free theta pseudonat essentially surjective}
	(Appendix \ref{proof Lemma cones to free theta pseudonat essentially surjective}) Let $\left(\left(p, \bar{p}\right), \theta_{p}\right): \left(TX, \mu_{X}\right) \rightarrow \left(TY, \mu_{Y}\right)$ be a morphism in $\left({\B}_{T}\right)_{\theta}$. Then $p.\eta_{X}$ equipped with the following $2$-cell defines a isobidescent cone from $X$ to $Y$.
	
	$$\begin{tikzcd}[font=\fontsize{9}{6}, row sep = 15]
		X
		\arrow[rr,"{\eta}_X"]
		\arrow[dd,"{\eta}_X"']
		&
		{}
		\arrow[dd,shorten=10,Rightarrow,"{\eta}_{{\eta}_X}"]
		&
		TX
		\arrow[rr,"p"]
		\arrow[dd,"T{\eta}_X"]
		&
		{}
		\arrow[dd,Rightarrow,shorten=10,"{\theta}_p"]
		&
		TY
		\arrow[dd,"T{\eta}_Y"]
		\\
		\\
		TX
		\arrow[rr,"{\eta}_{TX}"']
		\arrow[rrdd,bend right,"p"']
		&
		{}
		&
		{T^2}X
		\arrow[rr,"Tp"']
		\arrow[dd,Rightarrow,shorten=10,"{\eta}_{p}"]
		&
		{}
		&
		{T^2}Y
		\\
		\\
		&&
		TY
		\arrow[rruu,bend right,"{\eta}_{TY}"']
	\end{tikzcd}$$
\end{lemma}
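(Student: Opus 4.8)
The plan is to recognise the asserted data as an object of $\mathbf{Cone}_T(X,Y)$ by unpacking the three defining conditions of Definition \ref{unit bidescent def} one at a time and matching each against a piece of structure carried by the morphism $\left(\left(p,\bar p\right),\theta_p\right)$ of $\left(\B_T\right)_\theta$. First I would record exactly what the hypothesis supplies. By Proposition \ref{theta pseudonat 2-category from abstract kleisli structure} a morphism of $\left(\B_T\right)_\theta$ is precisely a pseudocoalgebra morphism between the free pseudocoalgebras of Example \ref{abstract kleisli structure on free pseudoalgebras}; concretely the thunking is an invertible $2$-cell $\theta_p: T\eta_Y.p \Rightarrow Tp.T\eta_X$ in $\B$ subject to two coherence axioms, namely a counit axiom built from the coalgebra counitor $\rho$ and a comultiplication axiom built from the coassociator $T\eta_{\eta}$. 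The candidate cone is $g:=p.\eta_X$ together with $\bar g$ the displayed pasting, which is the composite of $\eta_p$ whiskered on the right by $\eta_X$, then $\eta_{\eta_X}$ whiskered on the left by $Tp$, then $\theta_p^{-1}$ whiskered on the right by $\eta_X$. Its invertibility is immediate, since $\eta_p$, $\eta_{\eta_X}$ and $\theta_p$ are all invertible ($\eta$ is pseudonatural and the thunking is a pseudocoalgebra structure cell).

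Next I would verify the unit condition. After substituting the definition of $\bar g$, the unit pasting of Definition \ref{unit bidescent def} reduces to a composite involving $\mu_Y$, $\lambda_Y^{-1}$ and $\rho_Y^{-1}$ precomposed with $\eta_X$. I would collapse this to an identity by invoking the counit axiom for $\theta_p$, i.e. its compatibility with the counitor $\rho$ of the free coalgebra on $\left(TY,\mu_Y\right)$, after absorbing the pseudomonad triangle coherences relating $\lambda$ and $\rho$ to $\mu.\eta_T$ and $\mu.T\eta$, together with pseudonaturality of $\eta$ at $\eta_X$.

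Finally, the cocycle condition is the main obstacle and the only genuinely substantial part. I would expand both sides of the cocycle equation by replacing each occurrence of $\bar g$ with its threefold composite and then normalise using the middle-four interchange law. The two resulting pastings differ only in how $\theta_p$ and $\eta_p$ are nested against the pseudonaturality $2$-cells $\eta_{\eta_{TY}}$, $\eta_{T\eta_Y}$ and $T\eta_{\eta_Y}$ that appear in Definition \ref{unit bidescent def}. The conceptual input forcing agreement is exactly the comultiplication axiom of the pseudocoalgebra morphism $\theta_p$, namely its compatibility with the coassociator $T\eta_{\eta}$ of Example \ref{abstract kleisli structure on free pseudoalgebras}, supplemented by the pseudonaturality coherence of $\eta$ on the composite $p$ and one modification coherence of the pseudomonad. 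Everything else is whiskering and interchange bookkeeping, which is where essentially all of the length lives and which I would carry out in full in the appendix. Together these three verifications show that $\left(p.\eta_X,\bar g\right)$ is a well-defined object of $\mathbf{Cone}_T(X,Y)$, as claimed.
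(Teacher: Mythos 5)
Your proposal is correct and follows essentially the same route as the paper: the paper also verifies the cone axioms directly, proving the cocycle condition by expanding $\bar{g}$ into its three constituent cells and applying the associativity (comultiplication) axiom of the thunking $\theta_{p}$ together with pseudonaturality coherences of $\eta$ (at $\theta_{p}$, at $\eta_{X}$ and at $p$), while deferring the unit condition to the analogous calculation in the author's thesis using the counit axiom of $\theta_{p}$. The only minor imprecision is your claim that the cocycle chase needs ``one modification coherence of the pseudomonad'' --- the paper's reduction uses only pseudonaturality of $\eta$ (including its naturality at the $2$-cell $\theta_{p}$, which you implicitly fold into bookkeeping) --- but this does not affect the correctness of the approach.
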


\begin{proposition}\label{cones to free theta pseudonat an equivalence}
	(Appendix \ref{proof cones to free theta pseudonat an equivalence}) The functor $\underline{J}_{X, Y}: \mathbf{Cone}_{T}\left(X, Y\right) \rightarrow \left({\B}_{T}\right)_{\theta}\big(\left(TX, \mu_{X}\right), \left(TY, \mu_{Y}\right)\big)$ is an equivalence of categories.
\end{proposition}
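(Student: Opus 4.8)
The plan is to prove that $\underline{J}_{X,Y}$ is an equivalence by verifying the two standard criteria: essential surjectivity and full faithfulness. Essential surjectivity is already almost in hand. Lemma \ref{cones to free theta pseudonat essentially surjective} constructs, from any morphism $\left(\left(p, \bar{p}\right), \theta_{p}\right)$ of $\left({\B}_{T}\right)_{\theta}$, a descent cone $\left(p.\eta_{X}, -\right)$ from $X$ to $Y$. So first I would show that $\underline{J}_{X,Y}$ applied to this cone recovers the original morphism up to isomorphism in $\left({\B}_{T}\right)_{\theta}$. Concretely, I must exhibit an invertible $2$-cell between the underlying pseudomorphism $\left(\mu_{Y}, \alpha_{Y}\right)\circ\left(T\left(p.\eta_{X}\right), \mu_{p.\eta_{X}}\right)$ and the original $\left(p, \bar{p}\right)$, and check that this iso is compatible with the respective thunkings. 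The natural candidate comes from the pseudonaturality $2$-cell $p_{\eta_{X}}$ of $p$ together with the unit coherence $\lambda$: pasting $\mu_{Y}.Tp.T\eta_{X} \cong \mu_{Y}.\eta_{TY}.p \cong p$, where the first iso is pseudonaturality of $\mu$-style data and the second is $\lambda_{Y}$-precomposed. The nontrivial verification is that this isomorphism lives in $\left({\B}_{T}\right)_{\theta}$, i.e.\ that it respects thunkings; this is where the cocycle and unit conditions defining $\mathbf{Cone}_{T}\left(X,Y\right)$, and the thunking axioms for $\theta_{p}$, must be combined.

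For full faithfulness, I would argue as follows. A morphism $\phi$ in $\mathbf{Cone}_{T}\left(X, Y\right)$ is a $2$-cell $\phi: g \Rightarrow h$ satisfying the compatibility equation with $\bar{g}, \bar{h}$, while a $2$-cell in $\left({\B}_{T}\right)_{\theta}$ between the images is a thunkable $2$-cell between the corresponding pseudomorphisms. The functor $\underline{J}_{X,Y}$ sends $\phi$ to (essentially) $\mu_{Y}.T\phi$, or more precisely the whiskering of $\phi$ through the construction. Faithfulness should reduce to the observation that $\mu_{Y}.T(-).(\text{unit coherence})$ is injective on $2$-cells, which follows because $\eta$ and the pseudomonad coherences let one reconstruct $\phi$ from $\mu_{Y}.T\phi$ by precomposing with $T\eta_{X}$ and applying $\lambda$; that is, the assignment has a retraction at the level of $2$-cells. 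Fullness is the more delicate direction: given an arbitrary thunkable $2$-cell $\Psi$ between $\underline{J}_{X,Y}(g,\bar g)$ and $\underline{J}_{X,Y}(h,\bar h)$, I must produce a $2$-cell $\phi: g \Rightarrow h$ in $\mathcal{B}$ that satisfies the $\mathbf{Cone}_{T}$ morphism equation and whose image is $\Psi$. The candidate $\phi$ is obtained by restricting $\Psi$ along $\eta_{X}$ (and using $\lambda$ to land in $\mathcal{B}(X, TY)$); the content is then checking that the cone-morphism equation for $\phi$ is forced by the thunkability of $\Psi$ together with the cocycle conditions on $\bar g$ and $\bar h$.

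I expect the main obstacle to be fullness, specifically verifying that the $2$-cell extracted by restriction along $\eta$ genuinely satisfies the defining equation of a morphism in $\mathbf{Cone}_{T}\left(X,Y\right)$. This requires a sizeable pasting-diagram manipulation in which the thunkability condition on $\Psi$ (compatibility with the thunkings $\theta_{g}, \theta_{h}$ computed in Lemma \ref{theta of Gg}) is translated, via the unit and cocycle conditions and the modification coherences $\alpha$, $\lambda$, $\rho$ of the pseudomonad, into the required compatibility of $\phi$ with $\bar{g}$ and $\bar{h}$. Since this is precisely the kind of large pasting computation the paper defers to \cite{Miranda PhD}, I would state the key intermediate identities explicitly and relegate the full diagram chase to the appendix, where presumably the matching verifications for essential surjectivity (Lemma \ref{cones to free theta pseudonat essentially surjective}) already reside. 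With essential surjectivity, faithfulness, and fullness established, the conclusion that $\underline{J}_{X,Y}$ is an equivalence of categories follows immediately.
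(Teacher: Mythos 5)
Your overall strategy coincides with the paper's: essential surjectivity by applying $\underline{J}_{X,Y}$ to the cone constructed in Lemma \ref{cones to free theta pseudonat essentially surjective} and exhibiting an invertible thunkable $2$-cell back to $\left(\left(p,\bar{p}\right),\theta_{p}\right)$, and full faithfulness via the correspondence between $2$-cells $g \Rightarrow h$ in $\mathcal{B}\left(X,TY\right)$ and $2$-cells of free pseudoalgebras, checking that under this correspondence the cone-morphism equation matches thunkability. One point of comparison: the paper obtains fullness with far less work than you anticipate. It cites the biequivalence between the Kleisli bicategory of $\left(\mathcal{B},T\right)$ and the $2$-category of free pseudoalgebras to get the bijection on underlying $2$-cells outright, and then the fact that the restriction of a thunkable $2$-cell along $\eta_{X}$ satisfies the cone-morphism equation follows from pseudonaturality of $\eta$ on that $2$-cell alone --- no translation through the unit and cocycle conditions or the coherences $\alpha$, $\lambda$, $\rho$ is required, so the ``sizeable pasting-diagram manipulation'' you budget for does not arise at this step.

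However, the explicit isomorphism you propose for essential surjectivity fails as written. There is no pseudonaturality $2$-cell ``$p_{\eta_{X}}$'' --- $p$ is a $1$-cell of $\mathcal{B}$, not a transformation --- and no constraint yields $Tp.T\eta_{X} \cong \eta_{TY}.p$ directly: pseudonaturality of $\eta$ at $p$ gives $\eta_{TY}.p \cong Tp.\eta_{TX}$, and $\eta_{TX}$ is not isomorphic to $T\eta_{X}$, so your intermediate composite $\mu_{Y}.\eta_{TY}.p$ is not reachable by a single such cell, and $\lambda_{Y}$ never enters. The correct $2$-cell, which is the one the paper pastes, is built from the pseudomorphism constraint $\bar{p}\colon p.\mu_{X} \Rightarrow \mu_{Y}.Tp$ together with the unitor $\rho_{X}\colon 1_{TX} \Rightarrow \mu_{X}.T\eta_{X}$, namely $p \cong p.\mu_{X}.T\eta_{X} \cong \mu_{Y}.Tp.T\eta_{X}$; the intermediate object is $p.\mu_{X}.T\eta_{X}$, not $\mu_{Y}.\eta_{TY}.p$, and the unit coherence used is $\rho_{X}$, not $\lambda_{Y}$. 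The genuinely nontrivial part, as you correctly anticipate, is thunkability of this composite, which the paper defers to a diagram chase in Appendix 11.4 of \cite{Miranda PhD}. A cognate slip appears in your faithfulness argument: the retraction recovering $\phi$ from its image is restriction along $\eta_{X}$, not $T\eta_{X}$, followed by conjugation with the constraints $\eta_{g}$, $\eta_{h}$ and with $\lambda_{Y}$. These are local, repairable errors rather than a wrong approach, but as stated those two pastings would not compile into the required isomorphisms.
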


\begin{proposition}\label{G isomorphic to restriction along eta}
	(Appendix \ref{Proof of Proposition G isomorphic to restriction along eta}) Let $g: X \rightarrow Y$ be a morphism in $\B$. Then
	
	\begin{enumerate}
		\item  $\rho_{Y}: \underline{J}\left(\eta_{Y}g, \eta_{\eta_{Y}}\right) \rightarrow J\left(g\right)$ is a thunkable $2$-cell of free pseudoalgebras.
		\item $\rho_{Y}$ is the component at $g$ of a natural isomorphism
		
		$$\begin{tikzcd}[row sep = 15,font=\fontsize{9}{6}]
			\mathcal{B}\left(X{,}Y\right)
			\arrow[rr,"{\left({\eta}_Y{,}{\eta}_{{\eta}_Y}\right)}\circ{-}"]
			\arrow[rdd,"J_{X{,}Y}"']
			&&
			{\mathbf{Cone}_T}\left(X{,}Y\right)
			\arrow[ldd,"{\underline{J}}_{X{,}Y}"]
			\\
			&
			\cong
			\\
			&
			{{\left({\B}_T\right)}_{\theta}}{\left(X{,}Y\right)}
		\end{tikzcd}$$
	\end{enumerate}
\end{proposition}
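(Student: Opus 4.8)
The plan is to compare the two functors $\underline{J}_{X,Y}\circ\big((\eta_Y,\eta_{\eta_Y})\circ-\big)$ and $J_{X,Y}$ by first making their underlying data explicit. On a $1$-cell $g:X\to Y$, the functor $J_{X,Y}$ produces the free pseudomorphism $(Tg,\mu_g):(TX,\mu_X)\to(TY,\mu_Y)$ equipped with the thunking $T\eta_g$ (Proposition \ref{base to theta pseudonat}), while $\underline{J}_{X,Y}(\eta_Y g,\eta_{\eta_Y}g)$ has underlying pseudomorphism $(\mu_Y,\alpha_Y)\circ(T(\eta_Y g),\mu_{\eta_Y g})$, whose underlying $1$-cell is $\mu_Y.T\eta_Y.Tg$, together with the thunking assembled in Lemma \ref{theta of Gg} (Proposition \ref{cones to theta pseudonat functor}). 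The invertible unit coherence $\rho$ supplies an isomorphism $\mu_Y.T\eta_Y\cong 1_{TY}$, and whiskering its component $\rho_Y$ by $Tg$ gives the candidate comparison relating the two underlying $1$-cells. All checks below are carried out at the level of $\mathcal{B}$, which suffices because the relevant forgetful functors out of ${\left(\mathcal{B}_T\right)}_\theta$ are faithful on $2$-cells (Proposition \ref{theta pseudonat 2-category from abstract kleisli structure} together with faithfulness on $2$-cells of the forgetful functor from free pseudoalgebras).

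For part (1), I would first verify that $\rho_Y$ (whiskered by $Tg$) is a $2$-cell of \emph{pseudoalgebras}, that is, that it intertwines the composite structure $2$-cell of $(\mu_Y,\alpha_Y)\circ(T(\eta_Y g),\mu_{\eta_Y g})$ with the structure $2$-cell $\mu_g$ of $(Tg,\mu_g)$. This is a coherence calculation using the defining relations among $\alpha$, $\lambda$ and $\rho$ and the pseudonaturality $2$-cells of $\mu$ and $\eta$. I would then verify that $\rho_Y$ is \emph{thunkable}, namely that it intertwines the thunking of $\underline{J}(\eta_Y g,\eta_{\eta_Y}g)$ from Lemma \ref{theta of Gg} with the thunking $T\eta_g$ of $J(g)$. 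Invertibility of $\rho$ as a modification then upgrades $\rho_Y$ to an isomorphism in ${\left(\mathcal{B}_T\right)}_\theta\!\left(X,Y\right)$.

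For part (2), naturality in $g$ is essentially formal. The comparison $2$-cell at $g$ is the fixed $2$-cell $\rho_Y$ whiskered by $Tg$; recording that $J_{X,Y}(\phi)$ has underlying $2$-cell $T\phi$ and that $\underline{J}_{X,Y}$ of the induced cone morphism $\eta_Y\cdot\phi$ has underlying $2$-cell $\mu_Y.T\eta_Y\cdot T\phi$, naturality with respect to $\phi:g\Rightarrow h$ reduces to the middle-four interchange law between the fixed $\rho_Y$ and $T\phi$. Faithfulness on $2$-cells again reduces the check to $\mathcal{B}$, where interchange makes it automatic.

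The main obstacle will be the thunkability verification in part (1): the thunking produced in Lemma \ref{theta of Gg} is a large pasting assembled from $T\bar g$, $\mu_{\eta_Y}$, $T\eta_g$, $\rho_{TY}$ and $T\lambda_Y$, and one must show that conjugating it by $\rho_Y$ collapses precisely to $T\eta_g$. This requires repeated application of the pseudomonad coherences together with the specialisation $\bar g=\eta_{\eta_Y}g$, and is the one place where a careful pasting-diagram manipulation is unavoidable; by contrast the pseudoalgebra-compatibility check is comparatively routine and the naturality of part (2) is purely formal.
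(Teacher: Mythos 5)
Your proposal is correct and follows essentially the same route as the paper: the comparison $2$-cell is exactly $\rho_Y$ whiskered by $Tg$, part (1) is a pasting chase exploiting the specialisation $\bar{g}=\eta_{\eta_Y}g$ (the paper needs only the modification coherence for $\rho$ at $\eta_Y$ followed by coherence 3 for the pseudomonad), and part (2) is, as you say, middle-four interchange. The only divergence is that you flag an explicit pseudoalgebra-compatibility check for $\rho_Y$ which the paper treats as routine and omits; this does not change the structure of the argument.
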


\section{Main Results}\label{Section Main Results}

\noindent We now have all the ingredients to state and prove the $2$-categorical analogue of Theorem \ref{Conditions characterising monads whose EM adjunctions are of codescent type}.

\begin{theorem}\label{Two dimensional Abstract Kleisli equivalent characterisations}
	Let $\left(\mathcal{B}, T\right)$ be a pseudomonad, $\mathcal{B}^T$ its Eilenberg-Moore object and ${\left(\mathcal{B}^T\right)}^{\bar{T}}$ the coEilenberg-Moore object of the induced pseudocomonad on $\mathcal{B}^T$. Similarly, let ${\left({\B}_{T}\right)}^{\underline{T}}$ denote the coEilenberg-Moore object of the pseudocomonad induced on ${\B}_{T}$. Then the following are equivalent.
	
	\begin{enumerate}
		\item $J: \mathcal{B} \rightarrow \left({\B}_{T}\right)_{\theta}$ is a biequivalence.
		\item $\left(\mathcal{B}, T\right)$ satisfies isobidescent.
		\item The left pseudoadjoint $F_{T}: \mathcal{B} \rightarrow {\B}_{T}$ is faithful on $2$-cells, full on thunkable $2$-cells and surjective on $1$-cells which admit a thunking.
		\item The canonical comparison $2$-functor $\underline{K}: \mathcal{B} \rightarrow {\left({\B}_{T}\right)}^{\underline{T}}$ is bi-fully-faithful.
		\item The canonical comparison $2$-functor $\bar{K}: \mathcal{B} \rightarrow {\left(\mathcal{B}^T\right)}^{\bar{T}}$ is bi-fully-faithful. 
	\end{enumerate}
\end{theorem}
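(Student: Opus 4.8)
The plan is to treat condition (2), isobidescent, as the hub, with the equivalence $(1) \iff (2)$ serving as the technical backbone assembled from the intermediate results of Section~\ref{Section some intermediate calculations}, and then to connect the comparison conditions (4) and (5) back to it.

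First I would dispatch $(1) \iff (2)$. Since $J: \mathcal{B} \to \left(\mathcal{B}_T\right)_{\theta}$ factorises the bijective-on-objects left pseudoadjoint $F_T$ through the bijective-on-objects $F_{\theta}$ of Proposition~\ref{theta pseudonat 2-category from abstract kleisli structure}, it is itself bijective on objects, hence biessentially surjective; thus $J$ is a biequivalence exactly when each hom-functor $J_{X,Y}: \mathcal{B}(X,Y) \to \left(\mathcal{B}_T\right)_{\theta}\big(\left(TX,\mu_X\right),\left(TY,\mu_Y\right)\big)$ is an equivalence. By Proposition~\ref{G isomorphic to restriction along eta} there is a natural isomorphism $J_{X,Y} \cong \underline{J}_{X,Y}\circ\left(\eta\circ-,\eta_\eta\circ-\right)$, and by Proposition~\ref{cones to free theta pseudonat an equivalence} the functor $\underline{J}_{X,Y}$ is always an equivalence. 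The two-out-of-three property for equivalences of categories then gives that $J_{X,Y}$ is an equivalence precisely when $\left(\eta\circ-,\eta_\eta\circ-\right): \mathcal{B}(X,Y) \to \mathbf{Cone}_{T}(X,Y)$ is, and this holding for all $X,Y$ is exactly the isobidescent condition of Definition~\ref{unit bidescent def}.

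Next I would handle $(1) \iff (3)$ by unwinding local equivalence of $J$ and transporting it along $F_{\theta}$, which is bijective on objects and faithful on $2$-cells and satisfies $F_T = F_{\theta}\circ J$. Decomposing ``$J_{X,Y}$ is an equivalence'' into faithfulness, fullness and essential surjectivity of the hom-functor, these translate respectively into faithfulness of $F_T$ on $2$-cells, fullness of $F_T$ on thunkable $2$-cells, and surjectivity of $F_T$ on $1$-cells admitting a thunking, which is exactly clause (3). The only subtlety is that a $1$-cell may a priori carry several thunkings; here one uses faithfulness of $F_{\theta}$ on $2$-cells together with the construction of $\left(\mathcal{B}_T\right)_{\theta}$ to see that essential surjectivity of $J_{X,Y}$ is faithfully captured by the surjectivity clause. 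For the comparison conditions I would argue $(2) \iff (4) \iff (5)$: the objects and morphisms of $\mathbf{Cone}_{T}(X,Y)$ are precisely a $2$-cell description of the hom-category $\left(\mathcal{B}_T\right)^{\underline{T}}\big(\underline{K}X,\underline{K}Y\big)$, since the unit and cocycle conditions on $\left(g,\bar{g}\right)$ reproduce the $\underline{T}$-pseudocoalgebra-morphism coherences, this being the bicategorical dual of the codescent-type situation of \cite{Beck's Theorem For Pseudomonads}. Under this identification $\left(\eta\circ-,\eta_\eta\circ-\right)$ is the local action of $\underline{K}$, so $\underline{K}$ is bi-fully-faithful exactly when isobidescent holds, giving $(2)\iff(4)$. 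For $(4)\iff(5)$ I would run the two-dimensional analogue of the argument in Theorem~\ref{Conditions characterising monads whose EM adjunctions are of codescent type}: the image of each $Y$ under $\bar{K}$ is a $\bar{T}$-pseudocoalgebra whose underlying pseudoalgebra is free, hence equally a $\underline{T}$-pseudocoalgebra, so $\bar{K}$ and $\underline{K}$ have equivalent hom-categories and are bi-fully-faithful together.

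The main obstacle I expect is the identification underlying $(2)\iff(4)$: matching $\mathbf{Cone}_{T}(X,Y)$ with the hom-category of the coEilenberg--Moore object requires pinning down the universal property of the bicategorical descent object and checking that the unit and cocycle conditions reproduce the pseudocoalgebra coherences exactly, with all the invertible modifications ($\lambda$, $\rho$ and $\eta_\eta$) inserted in the correct places. The $(1)\iff(2)$ backbone is comparatively formal once Section~\ref{Section some intermediate calculations} is in hand, so the genuine work lies in this coherence bookkeeping for the descent-object comparison, together with the careful handling of essential surjectivity in $(1)\iff(3)$ where a given thunked $1$-cell must be recognised as coming from $\mathcal{B}$ up to a canonical thunkable isomorphism.
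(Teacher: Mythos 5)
Your proposal is correct and follows essentially the same route as the paper: $(1) \iff (2)$ via Propositions \ref{cones to free theta pseudonat an equivalence} and \ref{G isomorphic to restriction along eta} together with two-out-of-three, $(1) \iff (3)$ by transporting faithfulness, fullness and surjectivity along $F_{T} = F_{\theta}.J$, and $(4) \iff (5)$ by observing that every object in the image of $\bar{K}$ has a free underlying pseudoalgebra, so $\bar{K}$ factors through $\underline{K}$ via a $2$-fully-faithful $2$-functor. The only organisational difference is the link to $(4)$: the paper identifies $\left({\B}_{T}\right)_{\theta}$ with the image of $\underline{K}$, so that bi-fully-faithfulness of $\underline{K}$ reduces to $J$ being a biequivalence, whereas you identify $\mathbf{Cone}_{T}\left(X, Y\right)$ with the coEilenberg--Moore hom-categories directly; this is the same content, already supplied by Proposition \ref{cones to free theta pseudonat an equivalence}, so your flagged ``main obstacle'' requires no work beyond what that proposition provides.
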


\begin{proof}

	$(1) \iff (2)$ follows from Proposition \ref{G isomorphic to restriction along eta} by the two-out-of-three property for equivalences of categories given that by Proposition \ref{cones to free theta pseudonat an equivalence} $\bar{G}_{X, Y}$ is an equivalence and $G$ is bijective on objects. For $(1) \iff (3)$ observe $F_T = F_{\theta}.J$ and $F_{\theta}$ is faithful on $2$-cells, so this holds for $F_{T}$ if and only if it holds for $J$. Meanwhile, $F_{T}$ being full on thunkable $2$-cells (resp. essentially surjective on $1$-cells which admit a thunking) is clearly equivalent to $J$ being full on $2$-cells (resp. essentially surjective on $1$-cells which admit a thunking), since these properties characterise the $1$ and $2$-cells which are in ${\left({\B}_{T}\right)}_{\theta}$. The equivalence $\left(2\right) \iff \left(5\right)$ is true since by Proposition \ref{theta pseudonat 2-category from abstract kleisli structure} part (7), the $2$-category ${\left({\B}_{T}\right)}_{\theta}$ is precisely the image of the canonical comparison $\underline{K}: \mathcal{B} \rightarrow {\left({\B}_{T}\right)}^{\underline{T}}$. Finally, for $(4) \iff (5)$ one observes that the image under $\bar{K}: \mathcal{B} \rightarrow {\left(\mathcal{B}^T\right)}^{\bar{T}}$ of every $X \in \mathcal{B}$ will have an underlying $T$-pseudoalgebra which is free on $X$. Hence $\bar{K}$ will factorise through $\underline{K}$ via a $2$-fully-faithful $2$-functor, since ${\B}_{T} \rightarrow \mathcal{B}^T$ is $2$-fully-faithful. 
\end{proof}

\begin{definition}\label{trikleisli extensions}
	We describe a $\mathbf{Gray}$-category that can be seen as categorifying $\mathbf{KL}\left(\mathbf{Cat}\right)$, and whose description is dual of the description of $\mathbf{EM}\left(\mathcal{K}\right)$ given in section 2.2 of \cite{FTM2}. It will be denoted as ${\mathbf{KLExt}\left(\mathbf{Gray}\right)}_{\lambda}$, and has data as described below. There is also the $\mathbf{Gray}$-category $\mathbf{KLExt}\left(\mathbf{Gray}\right)_{\tau}$, described following Corollary 4.6 of \cite{Formal Theory of Pseudomonads}, and an identity on objects and arrows $\mathbf{Gray}$-functor
	\\ $\mathbf{KLExt}\left(\mathbf{Gray}\right)_{\tau} \rightarrow \mathbf{KLExt}\left(\mathbf{Gray}\right)_{\lambda}$.
	\begin{itemize}
		\item Objects given by pseudomonads $\left(\A, S\right)$.
		\item Morphisms $\left(G, \bar{G}\right): \left(\A, S\right) \rightarrow \left(\B, T\right)$ given by pairs of $2$-functors $G: \A \rightarrow \B$ and $\bar{G}: \A_S \rightarrow \B_T$ satisfying $\bar{G}.F_{S} = F_{T}.G$, where $F_{S}$ and $F_{T}$ are the left pseudoadjoints to the $2$-categories of free pseudoalgebras.
		\item $2$-cells $\phi: \left(G, \bar{G}\right) \Rightarrow \left(H, \bar{H}\right)$ given by arbitrary pseudonatural transformations $\phi: \bar{G} \Rightarrow \bar{H}$.
		\item $3$-cells $\Omega: \phi \Rrightarrow \psi$ are given by arbitrary modifications with source $\phi$ and target $\psi$.
	\end{itemize}
\end{definition}

\noindent We now turn to showing that $J$ defines the unit of reflections from $\mathbf{Gray}$-categories $\mathbf{KLExt}\left(\mathbf{Gray}\right)_{\kappa}$, to $\mathbf{Gray}$-categories of $2$-abstract Kleisli structures.

\begin{definition}\label{Gray-cat of Abs KL of 2-cats}
	Let $2$-$\mathbf{AbsKL}_{0}$ be the class of $2$-abstract Kleisli structures. The $\mathbf{Gray}$-category $2$-$\mathbf{AbsKL}_\tau$ will be defined as the intermediate $\mathbf{Gray}$-category appearing in the bijective on objects/fully-faithful factorisation of the assignment $2$-$\mathbf{AbsKL}_{0} \rightarrow \mathbf{KLExt}\left(\mathbf{Gray}\right)_\tau$ which sends the abstract Kleisli structure $\left(\mathcal{B}, T, \pi\right)$ to the pseudomonad $\left(\mathcal{B}_\pi, U_\pi.F_\pi\right)$ induced by the pseudoadjunction described in Proposition \ref{theta pseudonat 2-category from abstract kleisli structure} part 7. The $\mathbf{Gray}$-category $2$-$\mathbf{AbsKL}_\lambda$ will similarly be defined as the bijective on objects/fully-faithful factorisation of the same assignment this time viewed as $2$-$\mathbf{AbsKL}_{0} \rightarrow \mathbf{KLExt}\left(\mathbf{Gray}\right)_\lambda$.
\end{definition}

\noindent We will need Lemmas \ref{Bar G p bar p thunkable in B pi} and \ref{pseudonaturality constraint thunkable in B pi} to prove the desired universal property of $\left(1_{{\A}_{S}}, J\right)$. We fix the following notation \begin{itemize}
	\item $\left(A, S, \eta, \mu, \lambda, \alpha, \rho\right)$ is a pseudomonad. 
	\item $\left(\left(p, \bar{p}\right), \theta_{p}\right): X \rightarrow Y$ is a $1$-cell in ${\left({\A}_{S}\right)}_{\theta}$.
	\item $\left(\mathcal{B}, T, \pi\right)$ is an abstract Kleisli structure on a $2$-category $\mathcal{B}$.
	\item  $\left(G, \bar{G}\right)$ and $\left(H, \bar{H}\right)$ are morphisms of $2$-abstract Kleisli structures from $ {\left({\A}_{S}, \underline{S}, \theta\right)}$ to $ \left(\mathcal{B}, T, \pi\right)$.
	\item $\left(\phi, \bar{\phi}\right): \left(G, \bar{G}\right) \Rightarrow \left(H, \bar{H}\right)$ and $\left(\psi, \bar{\psi}\right): \left(G, \bar{G}\right) \Rightarrow \left(H, \bar{H}\right)$ are tight $2$-cells of $2$-abstract Kleisli structures.
	\item $\Omega: \left(\phi, \bar{\phi}\right) \Rrightarrow \left(\psi, \bar{\psi}\right)$ is a tight $3$-cell of $2$-abstract Kleisli structures.
\end{itemize}

\begin{lemma}\label{Bar G p bar p thunkable in B pi}
	The morphism $\bar{G}\left(p, \bar{p}\right)$ in $\mathcal{B}$ has a thunking given by the following pasting in $\mathcal{B}$. 
	
	$$\begin{tikzcd}[font=\fontsize{9}{6}, row sep = 15]
		&&
		GSX
		\arrow[rrrr,"{\bar{G}}{\left(p{,}\bar{p}\right)}"{name=D}]
		\arrow[dd,"G\eta_X"]
		\arrow[llddd,bend right=20,"{\pi}_{GSX}"'{name=A}]
		&&&&
		GSY
		\arrow[rrddd,bend left=20,"{\pi}_{GSY}"{name=H}]
		\arrow[dd,"G\eta_Y"']
		\\
		\\
		&&
		G{S^2}X
		\arrow[rrrr,"Gp"{name=E}]
		\arrow[dd, "\pi_{GS^2 X}"]
		\arrow[dll, Rightarrow, shorten = 10, "\pi_{G\eta_{X}}"]
		&&&&
		G{S^2}Y
		\arrow[dd, "\pi_{GS^2 X}"']
		\\
		TGSX
		\arrow[rrd, "TG\eta_{X}"{name=B}]
		\arrow[rrddd, bend right=20, "1_{TGX}"'{name=C}]
		&&&&&&&&
		TGSY
		\arrow[lldd, Rightarrow, shorten = 35, "T\bar{G}\rho_{Y}"]
		\arrow[lld, "TG\eta_{Y}"'{name=I}]
		\arrow[llddd,bend left=20, "1_{TGSY}"{name=J}]
		\\
		&&
		TG{S^2}X
		\arrow[rrrr, "TGSp"{name=F}]
		\arrow[dd, "T\bar{G}{\left(\mu_X{,}\alpha_X\right)}_{X}" description]
		&&&&
		TG{S^2}Y
		\arrow[dd, "T\bar{G}{\left(\mu_Y{,}\alpha_Y\right)}_{Y}"' description]
		\\
		&&&&&&{}
		\\
		&&
		TGSX
		\arrow[rrrr, "T\bar{G}\left(p{,}\bar{p}\right)"'{name=G}]
		&&&&
		TGSY
		\arrow[from=B,to=C,Rightarrow,shorten=15,"{T}{\bar{G}}{{\rho}_X}^{-1}"]
		\arrow[from=D,to=E,Rightarrow,shorten=10,"{\bar{G}}{{\theta}_p}"]
		\arrow[from=E,to=F,Rightarrow,shorten=15,"{\pi}_{Gp}"]
		\arrow[from=F,to=G,Rightarrow,shorten=15,"{T}{\bar{G}}{\bar{p}}"]
		\arrow[from=H,to=I,Rightarrow,shorten=15,"{\pi}_{G{\eta}_Y}"']
	\end{tikzcd}$$
\end{lemma}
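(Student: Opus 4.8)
Write $f := \bar{G}\left(p, \bar{p}\right)\colon GSX \to GSY$ and let $\Theta$ denote the displayed pasting. The plan is to verify that $\Theta$ is a thunking of $f$ in the sense of Proposition~\ref{theta pseudonat 2-category from abstract kleisli structure}, i.e. that it is a $2$-cell
\[
\Theta\colon \pi_{GSY}.f \Longrightarrow Tf.\pi_{GSX}
\]
exhibiting $\left(f, \Theta\right)$ as a morphism of $\mathcal{B}_\pi$, equivalently a pseudocoalgebra morphism $\left(GSX, \pi_{GSX}\right) \to \left(GSY, \pi_{GSY}\right)$ for the pseudocomonad of the abstract Kleisli structure $\left(\mathcal{B}, T, \pi\right)$. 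I would therefore organise the argument into three stages: (a) check that $\Theta$ is well defined with the stated boundary, (b) verify the counit axiom, and (c) verify the coassociativity axiom.

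For stage (a) I would first account for each constituent $2$-cell. The cell $\bar{G}\theta_p$ is the image, under the base $2$-functor of $\left(G, \bar{G}\right)$, of the thunking $\theta_p$ that $\left(p, \bar{p}\right)$ carries as a morphism of $\left(\mathcal{A}_S\right)_\theta$; recall from Example~\ref{abstract kleisli structure on free pseudoalgebras} and Lemma~\ref{theta of Gg} that the relevant coalgebra structure maps on the free side are the $S\eta$'s. Consequently the morphisms written $G\eta_X$, $G\eta_Y$ and $Gp$ are images under $G$ of thunkable morphisms, and since $\left(G, \bar{G}\right)$ is a morphism of $2$-abstract Kleisli structures its accompanying $2$-functor on thunkable morphisms carries thunkable morphisms to thunkable morphisms (the two-dimensional counterpart of Proposition~\ref{Proposition simplification of morphisms and 2-cells of abstract Kleisli structures}(1)); this is what produces the thunkings $\pi_{G\eta_X}$, $\pi_{G\eta_Y}$ and $\pi_{Gp}$. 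The remaining cells are $T\bar{G}$ applied to the pseudomorphism constraint $\bar{p}$ and to components of the unitor $\rho$ of the pseudomonad $\left(\mathcal{A}, S\right)$. Reading off sources and targets then shows that the top-and-right boundary of the pasting is $\pi_{GSY}.f$ and the bottom-and-left boundary is $Tf.\pi_{GSX}$, as required; the two $1$-cells labelled $1_{TGX}$ and $1_{TGSY}$ are exactly where a component of $\rho$ is used to contract $T\bar{G}$ of a free structure map to an identity.

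For stage (b) I would whisker $\Theta$ with the counit $\varepsilon$ of $T$ and reduce, invoking the counit axioms of the three thunkings $\pi_{G\eta_X}, \pi_{G\eta_Y}, \pi_{Gp}$, the counit condition satisfied by $\theta_p$ and pushed forward by $\bar{G}$, and finally the unit coherences of $\left(\mathcal{A}, S\right)$ carried by $\lambda$ and $\rho$ together with pseudonaturality of $\eta$. The collapse of the two identity $1$-cells is precisely the bookkeeping that makes these contributions cancel against the counitors $u_{GSX}$, $u_{GSY}$ of the coalgebras, yielding the counit axiom for $\left(f, \Theta\right)$.

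Stage (c) is where I expect the real work to be. Here one compares the two whiskerings of $\Theta$ by $\delta$ and by $T\Theta$, mediated by the coassociators $m_{GSX}$ and $m_{GSY}$ supplied by the abstract Kleisli structure on $\mathcal{B}$. The verification proceeds by interleaving the coassociativity axioms of the constituent thunkings $\pi_{G\eta_X}, \pi_{G\eta_Y}, \pi_{Gp}$ and of $\bar{G}\theta_p$ with pseudonaturality of $\mu$ evaluated on $p$ and on the $\eta$'s, and with the pseudomonad coherences $(1)$–$(5)$ of Definition~\ref{definition pseudomonad} applied to $\rho$ and $\alpha$; the cells $T\bar{G}\left(\mu_X, \alpha_X\right)$ and $T\bar{G}\left(\mu_Y, \alpha_Y\right)$ are the points at which the associativity datum $\alpha$ enters. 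Because this is the largest of the pastings and requires coordinating three independent comultiplication axioms with the associativity coherence of $T$, it is the main obstacle, and it is a calculation of the size that \cite{Miranda PhD} defers to its appendices. Once both axioms are confirmed, $\left(f, \Theta\right)$ is a morphism of $\mathcal{B}_\pi$ and $\Theta$ is the claimed thunking of $\bar{G}\left(p, \bar{p}\right)$.
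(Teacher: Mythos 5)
Your proposal is correct and takes essentially the same approach as the paper: the paper's own proof is itself only a deferral (``similar techniques to those in other proofs'') to Appendix 11.5 and Lemma 8.2.14 of \cite{Miranda PhD}, and your decomposition into boundary check, counit axiom and cocycle axiom, verified via the counit/coassociativity axioms of the constituent thunkings $\pi_{G\eta_X}$, $\pi_{Gp}$, $\pi_{G\eta_Y}$ and $\bar{G}\theta_{p}$ together with pseudonaturality and the pseudomonad coherences, is exactly the technique used in the paper's other appendix proofs (Appendices \ref{proof Lemma theta of Gg}--\ref{proof cones to free theta pseudonat essentially surjective}). Like the paper, you stop short of executing the large pasting chase, so your outline matches the paper's level of detail while making the proof obligations more explicit.
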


\begin{proof}
	This follows via similar techniques to those in other proofs. We omit details as they are significantly more tedious, but refer the interested reader to Appendix 11.5 and the proof of Lemma 8.2.14 of \cite{Miranda PhD}.
\end{proof}

\begin{corollary}\label{G' unique 2-functor}
	\hspace{1mm}
	\begin{enumerate}
		\item The assignment which sends the free pseudoalgebra $\left(SX, \mu_{X}\right)$ to $GX$, the thunked pseudomorphism $\left(\left(p, \bar{p}\right), \theta_{p}\right): \left(SX, \mu_{X}\right) \rightarrow \left(SY, \mu_{Y}\right)$ to $\bar{G}\left(p, \bar{p}\right)$ equipped with the thunking given by the $2$-cell described in Lemma \ref{Bar G p bar p thunkable in B pi}, and the thunkable $2$-cell $\chi: \left(\left(p, \bar{p}\right), \theta_{p}\right) \Rightarrow \left(\left(q, \bar{q}\right), \theta_{q}\right)$ to $\bar{G}\chi$ extends to a $2$-functor $G': {\left({\B}_{T}\right)}_{\theta} \rightarrow \mathcal{B}_\pi$.
		\item $G'$ is the unique $2$-functor satisfying \begin{enumerate}
			\item $G'J = G$, and
			\item $F_\pi .G' = \bar{G}.F_\theta$.
		\end{enumerate}
	\end{enumerate}
	
\end{corollary}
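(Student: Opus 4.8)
The plan is to build $G'$ cell-by-cell from the stated assignment and then read off both equations and the uniqueness from its construction. I will use throughout that the objects of ${\left({\A}_{S}\right)}_{\theta}$ are the free pseudoalgebras $\left(SX, \mu_{X}\right)$, that its $1$-cells are thunked pseudomorphisms $\left(\left(p, \bar{p}\right), \theta_{p}\right)$ and its $2$-cells the thunkable $2$-cells of ${\A}_{S}$; that $J$ is bijective on objects and $F_{\theta}J = F_{S}$ by Proposition \ref{base to theta pseudonat}; that $F_{\pi}: \mathcal{B}_{\pi} \to \mathcal{B}$ is bijective on objects and faithful on $2$-cells by Proposition \ref{theta pseudonat 2-category from abstract kleisli structure}; and that the morphism $\left(G, \bar{G}\right)$ carries the compatibility $\bar{G}F_{S} = F_{\pi}G$, so in particular $\bar{G}\left(SX, \mu_{X}\right) = GX$ on objects.

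For part (1), the assignment $\left(SX, \mu_{X}\right) \mapsto GX$ lands in $\mathcal{B}_{\pi}$ because $GX$ already carries the pseudocoalgebra structure supplied by $G: \A \to \mathcal{B}_{\pi}$. On $1$-cells the only point is that $\bar{G}\left(p, \bar{p}\right)$ really admits the displayed thunking, which is exactly the content of Lemma \ref{Bar G p bar p thunkable in B pi}. On $2$-cells I would paste the defining thunkability equation of $\chi$ with $\bar{G}$ and re-express the result through the thunking of Lemma \ref{Bar G p bar p thunkable in B pi} to conclude that $\bar{G}\chi$ is thunkable; since $\bar{G}$ is a $2$-functor this is a routine chase. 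Functoriality on underlying data is immediate from $2$-functoriality of $\bar{G}$, so the substantive check is that the thunking assignment respects identities and composition of $1$-cells. I expect this to be the main obstacle: one must verify that the pasting of Lemma \ref{Bar G p bar p thunkable in B pi} for a composite $\left(\left(q, \bar{q}\right), \theta_{q}\right)\circ\left(\left(p, \bar{p}\right), \theta_{p}\right)$ agrees with the composite of the two individual thunkings computed in $\mathcal{B}_{\pi}$. This is a large pasting computation in $\mathcal{B}$, which I would organise by cancelling with pseudonaturality of $\mu$ and the pseudocomonad coherences for $\pi$, deferring the full diagram to \cite{Miranda PhD}.

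For part (2), the equation $F_{\pi}.G' = \bar{G}.F_{\theta}$ holds by construction, since $G'$ forgets precisely to $\bar{G}$ applied to the underlying pseudomorphism on $1$-cells and to $\bar{G}\chi$ on $2$-cells, while agreeing with $\bar{G}$ on objects as above. For $G'J = G$, equality on objects is immediate; on a $1$-cell $f$ I would substitute the explicit thunking $S\eta_{f}$ of $J\left(f\right)$ from Proposition \ref{base to theta pseudonat} into the pasting of Lemma \ref{Bar G p bar p thunkable in B pi} and simplify, using the modification coherences for $\rho$ and $\eta_{\eta}$, to recover the thunking of $Gf$; the case of $2$-cells then follows because $F_{\pi}$ is faithful on $2$-cells.

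For uniqueness, let $G''$ also satisfy (a) and (b). Since $J$ is bijective on objects and $G''J = G = G'J$, the two agree on objects; since $F_{\pi}G'' = \bar{G}F_{\theta} = F_{\pi}G'$, they agree on the underlying morphism of every $1$-cell and, $F_{\pi}$ being faithful on $2$-cells, on every $2$-cell once they agree on $1$-cells. The remaining and most delicate point is that $G''$ and $G'$ assign the same thunking to each $1$-cell. Here I would exploit that the pseudocoalgebra structure map of each free object $\left(SX, \mu_{X}\right)$ is itself the $1$-cell $J\left(\eta_{X}\right)$ of ${\left({\A}_{S}\right)}_{\theta}$, with thunking its coassociator, so that the coalgebra coherences satisfied by the thunking $\theta_{p}$ of a general $1$-cell are phrased entirely in terms of $1$- and $2$-cells whose $G''$-images are already fixed by (a) and (b); feeding these coherences through $2$-functoriality then forces the thunking of $G''\left(\left(p, \bar{p}\right), \theta_{p}\right)$ to coincide with that of $G'$. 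Making this precise requires the same style of pasting as in part (1), which I would again defer to \cite{Miranda PhD}, and I regard it together with the composition check as the crux of the argument.
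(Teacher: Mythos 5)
Your construction of $G'$ and your verification of conditions (a) and (b) coincide with the paper's proof: thunkability of $\bar{G}\chi$ from the thunkability condition for $\chi$ and pseudonaturality of $\pi$, the compatibility of the thunking of Lemma \ref{Bar G p bar p thunkable in B pi} with composition as the main deferred pasting, condition (b) by construction, and condition (a) by specialising that thunking to $\left(\left(Sf,\mu_{f}\right), S\eta_{f}\right)$ and simplifying to the thunking of $Gf$. The genuine divergence is in uniqueness. The paper's argument is a one-liner: condition (b) forces $G''\chi = \bar{G}\chi$ on $2$-cells, ``and since $G''$ agrees with $G'$ on $2$-cells it must equal $G'$''. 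As you correctly flag, this elides the crux: thunkings are structure rather than property and $F_{\pi}$ forgets them, so agreement of underlying $1$-cells and $2$-cells does not by itself determine the thunking that $G''$ assigns to a $1$-cell. Your repair --- observe that the coassociativity coherence for $\theta_{p}$ says precisely that $\theta_{p}$ is a thunkable $2$-cell from $J\left(\eta_{Y}\right)\circ\left(\left(p,\bar{p}\right),\theta_{p}\right)$ to $J\left(p\right)\circ J\left(\eta_{X}\right)$, apply $G''$, and note that every other cell appearing in the resulting thunkability equation is already fixed by (a) and (b) --- is sound, and is presumably what the paper's terse sentence defers to the thesis. One step you should make explicit when carrying this out: the unknown thunking $t$ enters that equation only inside the whisker $T\bar{G}\left(S\eta_{Y},\mu_{\eta_{Y}}\right)\ast t$, so concluding that $t$ is determined requires cancelling this whiskering; that is legitimate because $\left(S\eta_{Y},\mu_{\eta_{Y}}\right)$ admits the retraction $\left(\mu_{Y},\alpha_{Y}\right)$ up to the invertible $2$-cell $\rho_{Y}$, so whiskering by its image is injective on $2$-cells. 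With that supplied, your uniqueness argument is complete and is in fact more rigorous than the one printed in the paper.
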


\begin{proof}
	\noindent For part 1, first observe that if $\chi: \left(\left(p, \bar{p}\right), \theta_{p}\right) \Rightarrow \left(\left(q, \bar{q}\right), \theta_{q}\right)$ is a thunkable $2$-cell in $\mathbf{FreePsAlg}_{S}$ then $\bar{G}\chi$ is also a thunkable $2$-cell in $\mathcal{B}$. This follows from the thunkability condition for $\chi$, pseudonaturality of $\pi$ on $G\chi$, and the coherence for $\chi$ as a $2$-cell of pseudoalgebras. Then functoriality of $G'$ between hom categories is clear, while $2$-functoriality of $G'$ follows from that of $\bar{G}$, pseudonaturality of $\pi$, and by cancelling components of $T\bar{G}\rho$ and $\pi_{G\eta}$ with their inverses.
	\\
	\\
	For part $2$, $G'$ satisfies condition $(b)$ by construction. To see that it also satisfies condition $(a)$, it suffices to consider the thunking described in Lemma \ref{Bar G p bar p thunkable in B pi} in the case where $\left(\left(p, \bar{p}\right), \theta_{p}\right) = \left(\left(Sf, \mu_{f}\right), S\eta_{f}\right)$ and observe that this simplifies to $\pi_{Gf}$. This uses pseudonaturality of $\pi$ on $G\eta_{f}$ and the modification coherence for $\rho$ on $f$. Finally for uniqueness, observe that $\bar{G}.F_{\theta} = F_\pi .G''$ implies that $G''\chi = \bar{G}\chi$ for any $2$-functor $G''$, and that since $G''$ agrees with $G'$ on $2$-cells it must equal $G'$.
\end{proof}

\begin{lemma}\label{pseudonaturality constraint thunkable in B pi}
	Let $\left(\phi, \bar{\phi}\right): \left(G, \bar{G}\right) \Rightarrow \left(H, \bar{H}\right)$ be a tight $2$-cell of $2$-abstract Kleisli structures. Then the pseudonaturality component $\bar{\phi}_{\left(p, \bar{p}\right)}$ is a thunkable $2$-cell in $\mathcal{B}$.
\end{lemma}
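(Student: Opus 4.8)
The plan is to unwind what it means for $\bar{\phi}_{(p,\bar{p})}$ to be a \emph{thunkable} $2$-cell in $\mathcal{B}$, namely that it satisfies the naturality-type condition with respect to the thunkings recorded in $\mathcal{B}_\pi$ (equivalently, that $\bar{\phi}_{(p,\bar{p})}$ is a $2$-cell of the pseudocomonad coalgebras underlying $\mathcal{B}_\pi$). Concretely, thunkability of $\bar{\phi}_{(p,\bar{p})}$ asserts that pasting the thunking $\pi$-data of $\bar{G}(p,\bar{p})$ (computed in Lemma \ref{Bar G p bar p thunkable in B pi}) against the component $\bar\phi$ agrees with pasting $\bar\phi$ against the corresponding thunking data of $\bar{H}(p,\bar{p})$. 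So the first step is to write out both sides of this required equation as pastings in $\mathcal{B}$, using the explicit thunking $2$-cell from Lemma \ref{Bar G p bar p thunkable in B pi} for each of $G$ and $H$.

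Next I would exploit the hypothesis that $(\phi,\bar\phi)$ is a \emph{tight} $2$-cell of $2$-abstract Kleisli structures. By Proposition \ref{Proposition simplification of morphisms and 2-cells of abstract Kleisli structures} part (3) (and its $\mathbf{Gray}$-categorical analogue implicit in the setup around Definition \ref{Gray-cat of Abs KL of 2-cats}), tightness means precisely that the components $\bar\phi_X$ are \emph{thunkable} and that $\phi$ and $\bar\phi$ are compatible with the left pseudoadjoints, i.e.\ $F_\pi.\phi$ and $\bar\phi.F_S$ agree in the appropriate sense. The key structural facts I would feed in are: pseudonaturality of $\pi$ evaluated on the pseudonaturality square of $\bar\phi$ (this is what lets the two $\pi_{GSX}$- and $\pi_{HSX}$-style legs be exchanged past $\bar\phi$), the thunkability of the individual components $\bar\phi_X$, and pseudonaturality of $\bar\phi$ itself on the morphism $(p,\bar{p})$. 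The proof is then a diagram-chase: transport $\bar\phi_{(p,\bar{p})}$ across the interleaved pastings by repeatedly applying pseudonaturality of $\pi$, cancelling the $T\bar{G}\rho$/$T\bar{H}\rho$ and $\pi_{G\eta}$/$\pi_{H\eta}$ coherence $2$-cells against their inverses exactly as in the proof of Corollary \ref{G' unique 2-functor}.

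The main obstacle is bookkeeping rather than conceptual: the thunking $2$-cell of Lemma \ref{Bar G p bar p thunkable in B pi} is an eight-region pasting, so verifying the required equation amounts to comparing two large composite pastings and matching them region-by-region. The delicate point is ensuring that the exchange of $\bar\phi$ past the $\pi_{Gp}$ / $\pi_{Hp}$ regions uses \emph{pseudonaturality of $\pi$ applied to the $2$-cell $\bar\phi_{p}$} (not merely to $1$-cells), and that the thunkability hypothesis on $\bar\phi_X$ is invoked at precisely the outer $\pi_{GSX}$-legs; getting these interchange laws applied in the correct order is where an error would most plausibly hide. As with the surrounding results, I would carry out the full region-by-region verification in the style of Appendix 11.5 of \cite{Miranda PhD} and merely indicate the governing principles here.
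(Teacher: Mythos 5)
Your overall shape (unwind the thunkability equation against the explicit thunkings of Lemma \ref{Bar G p bar p thunkable in B pi}, then do a region-by-region pasting chase) matches the paper's, but the step you yourself flag as the delicate point is, as stated, circular. In the two-dimensional setting, ``pseudonaturality of $\pi$ with respect to a $2$-cell'' is not an ambient fact you may freely apply: by the construction in Appendix \ref{proof Proposition theta pseudonat 2-category from abstract kleisli structure}, $\pi$ is pseudonatural only as a transformation $1_{\B_\pi} \Rightarrow U_\pi F_\pi$ on $\B_\pi$, and the equation expressing compatibility of the $\pi$-constraints with a $2$-cell $\chi$ of $\B$ \emph{is} the definition of $\chi$ being thunkable. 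Hence ``pseudonaturality of $\pi$ evaluated on the pseudonaturality square of $\bar{\phi}$'' at $(p,\bar{p})$ is precisely the statement of the lemma, and ``pseudonaturality of $\pi$ applied to the $2$-cell $\bar{\phi}_{p}$'' is likewise only available once the $2$-cell in question has already been shown to be thunkable. Feeding these in as structural facts assumes the conclusion.

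The non-circular ingredient --- and the one the paper actually cites --- is that tightness gives strictly more than thunkings on the components $\bar{\phi}_X$: it says $\phi$ is a pseudonatural transformation between $2$-functors valued in $\B_\pi$ satisfying $F_\pi.\phi = \bar{\phi}.F_S$, so every pseudonaturality \emph{constraint} of $\phi$ at a morphism of $\A$ is a $2$-cell of $\B_\pi$, i.e.\ thunkable; concretely, $\bar{\phi}_{F_S f} = F_\pi\left(\phi_f\right)$ is thunkable for every $f$ in $\A$. It is these constraint $2$-cells --- the paper uses $\phi_{\eta_X}$ and $\phi_{\eta_Y}$ --- that legitimately exchange the $\pi_{G\eta_X}$-, $\pi_{Gp}$- and $\pi_{G\eta_Y}$-type regions of the Lemma \ref{Bar G p bar p thunkable in B pi} thunkings past $\bar{\phi}$. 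Note this covers only morphisms in the image of $F_S$; the constraint $\bar{\phi}_{(p,\bar{p})}$ at an arbitrary thunked morphism is not of that form (the $1$-cell $(p,\bar{p})$ merely \emph{admits} a thunking), which is exactly why the lemma has content. Your one-dimensional model, Proposition \ref{Proposition simplification of morphisms and 2-cells of abstract Kleisli structures} part (3), captures only the component-level consequence of tightness and misses this constraint-level one; once you substitute it for the circular invocation, the rest of your chase (pseudonaturality of $\bar{\phi}$ on $\theta_p$ and $\bar{p}$, cancellation of the $T\bar{G}\rho$ and $\pi_{G\eta}$ cells) proceeds as you sketch and as in the paper's Appendix 11.6 reference.
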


\begin{proof}
	This is proved using thunkability of $\phi_{\eta_{X}}$ and $\phi_{\eta_{Y}}$, and pseudonaturality of $\overline{\phi}$, as detailed in Appendix 11.6 of \cite{Miranda PhD}.
\end{proof}

\begin{corollary}\label{phi' unique factorisation through J}
	Let $G'$ be as defined in \ref{G' unique 2-functor} and let $H'$ be defined analogously from $\left(H, \bar{H}\right)$. Then
	
	\begin{enumerate}
		\item There is a pseudonatural transformation $\phi': G' \Rightarrow H'$ with component at $X$ given by $\phi_{X}$ and component at $\left(\left(p, \bar{p}\right), \theta_{p}\right)$ given by $\bar{\phi}_{\left(p, \bar{p}\right)}$.
		\item $\phi'$ is the unique pseudonatural transformation satisfying \begin{enumerate}
			\item $\phi'J=\phi$, and
			\item $\bar{\phi}.F_\theta = F_\pi . \phi'$.
		\end{enumerate}
	\end{enumerate} 
\end{corollary}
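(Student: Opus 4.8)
The plan is to build $\phi'$ componentwise exactly as in Corollary \ref{G' unique 2-functor}, but one dimension up, and then force both well-definedness and uniqueness through the single fact that $F_\pi$ is faithful on $2$-cells (Proposition \ref{theta pseudonat 2-category from abstract kleisli structure}). First I record the data. The $1$-cell component of $\phi'$ at the object $X$ (i.e.\ at the free pseudoalgebra $\left(SX, \mu_X\right)$) is taken to be $\phi_X$; since $\left(\phi, \bar\phi\right)$ is a \emph{tight} $2$-cell of $2$-abstract Kleisli structures its components are thunkable, so $\phi_X$ genuinely lives in $\mathcal{B}_\pi$. The pseudonaturality $2$-cell of $\phi'$ at a morphism $\left(\left(p, \bar p\right), \theta_p\right)$ is taken to be $\bar\phi_{\left(p, \bar p\right)}$, which is a legitimate $2$-cell of $\mathcal{B}_\pi$ precisely because Lemma \ref{pseudonaturality constraint thunkable in B pi} shows it to be thunkable; its invertibility in $\mathcal{B}_\pi$ is inherited from that of the pseudonaturality constraints of $\bar\phi$, the inverse being again thunkable as the corresponding constraint of the inverse family.

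For part (1) it remains to check the three pseudonaturality axioms for $\phi'$: naturality with respect to the thunkable $2$-cells of $\left({\A}_S\right)_\theta$, and the two coherence axioms for composites and for identity $1$-cells. Each axiom is an equation of $2$-cells of $\mathcal{B}_\pi$, so by faithfulness of $F_\pi$ on $2$-cells it suffices to verify it after applying $F_\pi$. Invoking $F_\pi G' = \bar G. F_\theta$ and $F_\pi H' = \bar H. F_\theta$ from Corollary \ref{G' unique 2-functor}(2)(b), together with $F_\pi\left(\phi'_X\right) = \phi_X$ and $F_\pi\left(\phi'_{\left(p, \bar p\right)}\right) = \bar\phi_{\left(p, \bar p\right)}$, the $F_\pi$-image of each axiom for $\phi'$ becomes verbatim the corresponding pseudonaturality axiom for $\bar\phi$ precomposed with $F_\theta$, which holds because $\bar\phi$ is pseudonatural. (One also checks here that $F_\pi$ matches the $1$-cell boundaries, so that the equations really are between parallel $2$-cells.) This proves part (1). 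Condition (b) then holds by the very choice of components, namely $F_\pi \phi' = \bar\phi. F_\theta$, while condition (a) follows on objects from $\phi'_{J(X)} = \phi_X$ and on morphisms by identifying $\bar\phi_{\left(Sf, \mu_f\right)}$ with the constraint $\phi_f$ via tightness of $\left(\phi, \bar\phi\right)$.

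For part (2), uniqueness is forced by the separating properties of $J$ and $F_\pi$. Any competitor $\phi''$ satisfying (a) has, at each object $\left(SX, \mu_X\right) = J(X)$, the same $1$-cell component $\phi_X$ as $\phi'$, because $J$ is bijective on objects (Proposition \ref{base to theta pseudonat}) so condition (a) pins every $1$-cell component down. Any $\phi''$ satisfying (b) has $F_\pi\left(\phi''_{\left(p, \bar p\right)}\right) = \left(\bar\phi. F_\theta\right)_{\left(p, \bar p\right)} = \bar\phi_{\left(p, \bar p\right)} = F_\pi\left(\phi'_{\left(p, \bar p\right)}\right)$ at every morphism, whence $\phi''_{\left(p, \bar p\right)} = \phi'_{\left(p, \bar p\right)}$ by faithfulness of $F_\pi$ on $2$-cells. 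Thus $\phi'' = \phi'$. I expect the only genuine work to lie in the axiom-chasing of the second paragraph; although faithfulness of $F_\pi$ reduces it to the already-known axioms for $\bar\phi$, some care is needed to see that the thunkings carried by the $1$-cell components do not interfere, and it is exactly there that the compatibility $F_\pi \phi' = \bar\phi. F_\theta$ and the thunkability supplied by Lemma \ref{pseudonaturality constraint thunkable in B pi} do the work.
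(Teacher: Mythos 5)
Your proposal is correct and follows essentially the same route as the paper's own (much terser) proof: the components are exactly those in the statement, well-definedness of the constraint $2$-cells comes from Lemma \ref{pseudonaturality constraint thunkable in B pi}, the pseudonaturality axioms are inherited from those of $\bar{\phi}$, and uniqueness is forced by condition (a) (with $J$ bijective on objects) for the $1$-cell components together with condition (b) and faithfulness of $F_\pi$ on $2$-cells for the constraints. Your version simply spells out the details the paper leaves implicit, so there is nothing further to add.
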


\begin{proof}
	For part (1), the conditions for pseudonaturality of $\phi'$ follow directly from the analogous conditions for $\bar{\phi}$. For part (2) it is clear from the definition of $\phi'$ and the fact that $F_\pi.\phi = \bar{\phi}.F_\theta$ that $\phi'$ uniquely satisfies conditions $(a)$ and $(b)$.
\end{proof}

\begin{theorem}\label{J a unit for 2 abstract kleisli structures}
	The inclusion $I: 2$-$\mathbf{AbsKL} \rightarrow \mathbf{KLExt}\left(\mathbf{Gray}\right)$ has a left $\mathbf{Gray}$-adjoint which sends $\left(\A, S\right)$ to the $2$-abstract Kleisli structure $\left({\A}_{S}, \underline{S}, F_{S}\eta \right)$, and the unit of this adjunction at $\left(\A, S\right)$ is given by $\left(J, 1_{{\A}_{S}}\right): \left(\A, S\right) \rightarrow \left({\left({\A}_{S}\right)}_{\theta}, S'\right)$.
\end{theorem}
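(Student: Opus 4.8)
The plan is to show, for $\kappa \in \{\tau, \lambda\}$, that the unit $\left(J, 1_{\A_S}\right)$ satisfies the defining universal property of a $\Gr$-adjunction. Since $I$ is fully faithful by construction (Definition \ref{Gray-cat of Abs KL of 2-cats}, where $2$-$\mathbf{AbsKL}_\kappa$ is the image of a bijective-on-objects/fully-faithful factorisation), this reduces to showing that precomposition with $\left(J, 1_{\A_S}\right)$ induces an isomorphism of hom-$2$-categories
$$\mathbf{KLExt}\left(\Gr\right)_\kappa\left(IL\left(\A, S\right), I\left(\B, T, \pi\right)\right) \xrightarrow{\ \cong\ } \mathbf{KLExt}\left(\Gr\right)_\kappa\left(\left(\A, S\right), I\left(\B, T, \pi\right)\right)$$
for every $2$-abstract Kleisli structure $\left(\B, T, \pi\right)$, $\Gr$-naturally in it. As the hom-objects are $2$-categories, this amounts to a bijection on objects (the morphisms), together with an isomorphism of each hom-category, that is, bijections on $2$-cells and on $3$-cells.

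First I would record that the object level and the $2$-cell level are precisely Corollaries \ref{G' unique 2-functor} and \ref{phi' unique factorisation through J}. The former produces from each morphism $\left(G, \bar{G}\right)$ a unique morphism $G'$ of $2$-abstract Kleisli structures with $G'J = G$ and $F_\pi G' = \bar{G} F_\theta$, giving the inverse to precomposition on objects; the latter produces from each tight $2$-cell $\left(\phi, \bar{\phi}\right)$ the unique $\phi'$ with $\phi' J = \phi$ and $\bar{\phi} F_\theta = F_\pi \phi'$, giving the bijection on $2$-cells. Functoriality of these assignments with respect to vertical composition and the whiskerings built into the $\Gr$-tensor is immediate, since $G'$ and $\phi'$ are defined componentwise from $\bar{G}$, $\bar{\phi}$ on the Kleisli side and from $G$, $\phi$ on objects.

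Then I would construct the $3$-cell level. A tight $3$-cell $\Omega\colon\left(\phi,\bar\phi\right)\Rrightarrow\left(\psi,\bar\psi\right)$ carries in particular a modification $\bar\Omega\colon\bar\phi\Rrightarrow\bar\psi$ between its Kleisli-side components. I would define $\Omega'\colon\phi'\Rrightarrow\psi'$ by declaring its component at each object to be the corresponding base component of $\Omega$; since $J$ is bijective on objects (it factors the Kleisli inclusion through the bijective-on-objects $F_\theta$ of Proposition \ref{theta pseudonat 2-category from abstract kleisli structure}, as in Proposition \ref{base to theta pseudonat}), this is both forced by $\Omega'J=\Omega$ and demanded by uniqueness. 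The modification axiom for $\Omega'$ at a thunked morphism $\left(\left(p,\bar p\right),\theta_p\right)$ is an equation of $2$-cells in $\B_\pi$; whiskering with the left pseudoadjoint $F_\pi$, which is faithful on $2$-cells by Proposition \ref{theta pseudonat 2-category from abstract kleisli structure}, reduces it to the modification axiom for $\bar\Omega$ together with the defining equations of $\phi'$ and $\psi'$. This supplies the remaining bijection and completes the isomorphism of hom-$2$-categories.

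I expect the main obstacle to lie not in the individual bijections but in verifying that the resulting isomorphism is genuinely $\Gr$-natural, that is, strictly compatible with the cubical interchange encoded by the $\Gr$-tensor of $\mathbf{KLExt}\left(\Gr\right)_\kappa$, rather than merely bicategorically natural. Concretely one must check that $G'$, $\phi'$ and $\Omega'$ commute on the nose with the whiskering $2$-functors and the interchange isomorphisms; this holds because every construction is defined by strict composition with the pseudoadjunction data of Proposition \ref{theta pseudonat 2-category from abstract kleisli structure}, whose $F$ is a strict $2$-functor that is bijective on objects and faithful on $2$-cells, so no coherence slack is introduced. Finally I would observe that the loose case $\kappa = \lambda$ is strictly easier: there the $2$-cells and $3$-cells are arbitrary pseudonatural transformations and modifications on the Kleisli categories, carrying no factorisation constraint, so precomposition with $1_{\A_S}$ is already the identity on these, and compatibility with the identity-on-objects-and-arrows $\Gr$-functor of Definition \ref{trikleisli extensions} then propagates the tight statement to the loose one.
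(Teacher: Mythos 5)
Your proposal is correct and takes essentially the same route as the paper's proof: reduce to showing that precomposition with $\left(J, 1_{{\A}_{S}}\right)$ is an isomorphism of hom-$2$-categories, cite Corollaries \ref{G' unique 2-functor} and \ref{phi' unique factorisation through J} for the object and $2$-cell levels, and define the inverse on $3$-cells componentwise, which is forced and well-defined because $F_\pi$, $F_\theta$ and $J$ are bijective on objects (with the modification axiom for $\Omega'$ following from that for $\bar{\Omega}$). Your extra remarks on strict $\mathbf{Gray}$-naturality and on the loose case $\kappa = \lambda$ go slightly beyond what the paper records explicitly, but they do not alter the argument.
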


\begin{proof}
	It suffices to show that the $2$-functor depicted below, which is induced by precomposition along $\left(J, 1_{\mathbf{FreePsAlg_{S}}}\right)$, is an isomorphism of $2$-categories.
	\\
	$$\mathbf{KLExt}\left(\mathbf{Gray}\right)\left(\left(\left({\A}_{S}\right)_{\theta}{,}\underline{S}'\right){,}\left(\mathcal{B}_{\pi}{,}T'\right)\right) \rightarrow \mathbf{KLExt}\left(\mathbf{Gray}\right)\left(\left(\A{,} S\right){,}\left(\mathcal{B}_{\pi}{,}T'\right)\right)$$
	\\
	In Corollaries \ref{G' unique 2-functor} and \ref{phi' unique factorisation through J} we have already seen that the actions of this $2$-functor on objects and on morphisms are bijections. Let $\left(\Omega, \bar{\Omega}\right): \left(\phi, \bar{\phi}\right) \Rrightarrow \left(\psi, \bar{\psi}\right)$ be a $3$-cell of $2$-abstract Kleisli structures. Then observe that $\bar{\Omega}_{X} = \Omega_{X}$ for every $X \in \A$, and hence the modification coherence for $\bar{\Omega}$ implies that $X \mapsto \Omega_{X}$ also extends to a modification $\Omega': \phi' \Rrightarrow \psi'$. Finally, observe that $\Omega'$ is indeed the unique modification $\phi' \Rrightarrow \psi'$ satisfying $\Omega'J = \Omega$ and $F_\pi . \Omega = \bar{\Omega}.F_\theta$. All of these observations are straightforward since the $2$-functors $F_\pi$, $F_\theta$ and $J$ are all bijective on objects. This completes the proof.
\end{proof}

\section{Concluding remarks}\label{conclusion}

\noindent Führmann showed that abstract Kleisli structures form a full reflective sub-category of the category of monads and strict morphisms, whose essential image consists of those monads whose unit $\eta: 1_{\A} \Rightarrow S$ is the equaliser of $S\eta$ and $\eta_{S}$. We have further abstracted these structures, and shown that Führmann's adjunction underlies a reflective $2$-adjunction. We then defined abstract Kleisli structures on $2$-categories, and proven two-dimensional analogues of both Führmann's results and our extensions of those results. Specifically, we have shown that abstract Kleisli structures on $2$-categories can be seen as the objects of full sub-$\mathbf{Gray}$-categories of either ${\mathbf{KLExt}\left(\mathbf{Gray}\right)}_{\tau}$ or $\mathbf{KLExt}\left(\mathbf{Gray}\right)_{\lambda}$, with these structures being described in following Corollary 4.6 in \cite{Formal Theory of Pseudomonads} and in Definition \ref{trikleisli extensions}, respectively. In both instances, there is a reflection to the inclusion given by passing to the $2$-category whose morphisms are equipped with thunkings, and whose $2$-cells are thunkable. If the data $\left(\eta, \eta_{\eta}\right)$ extracted from a given pseudomonad is a certain isobidescent cone, then that pseudomonad is biequivalent to the canonical pseudomonad formed by an abstract Kleisli structure on a $2$-category. The base of this pseudomonad consists of morphisms equipped with thunkings, and the thunkable $2$-cells. Also equivalent to this bicategorical limit condition are certain criteria on the left pseudoajoint, or bi-fully faithfulness of comparisons from the base $2$-category to $2$-categories of descent data.

\section{Appendices of proofs}\label{Appendix}

\begin{notation}
	We use colour to draw the reader's attention to new data appearing in each step of a proof involving a pasting diagram chase. In particular, we use \textcolor{blue}{blue} for new objects and morphisms and \textcolor{red}{red} for new $2$-cells. To avoid clutter, we omit denoting inverses of $2$-cells with $(-)^{-1}$. The reader should be able to infer from the source and target of an invertible $2$-cell denoted $\gamma$ if it is actually the inverse $\gamma^{-1}$.
\end{notation}

\subsection{Proof of Proposition \ref{theta pseudonat 2-category from abstract kleisli structure}}\label{proof Proposition theta pseudonat 2-category from abstract kleisli structure}

 We first observe that the $2$-category $\mathcal{B}_{\theta}$ has
 
 \begin{itemize}
 	\item Objects the same as $\mathcal{B}$.
 	\item Arrows $\left(f, \theta_{f}\right): X \rightarrow Y$ consisting of an arrow $f: X \rightarrow Y$ in $\mathcal{B}$ and an invertible $2$-cell $\theta_{f}: \theta_{Y}.f \Rightarrow Qf.\theta_{X}$ satisfying the following equations.
 	
 	\begin{itemize}
 		\item (Unit condition)
 		\\
 		\begin{tikzcd}[font=\fontsize{9}{6}]
 			X
 			\arrow[rr,"f"]
 			\arrow[dd,"{\theta}_X"']
 			&
 			{}
 			\arrow[dd,Rightarrow,shorten=15,"{\theta}_f"]
 			&
 			Y
 			\arrow[dd,"{\theta}_Y"]
 			\arrow[rrdd,bend left,"1_Y"]
 			&
 			{}
 			\arrow[dd,Rightarrow,shorten=15,"u_Y"]
 			\\
 			\\
 			QX
 			\arrow[rr,"Qf"']
 			&
 			{}
 			&
 			QY
 			\arrow[rr,"{\varepsilon}_Y"']
 			&
 			{}
 			&
 			Y
 		\end{tikzcd} = \begin{tikzcd}[font=\fontsize{9}{6}]
 			X
 			\arrow[rr,"1_X"]
 			\arrow[dd,"{\theta}_X"']
 			&
 			{} \arrow[d,Rightarrow,"u_X"',shorten=4,shift right=3]
 			&
 			X
 			\arrow[dd,Rightarrow,shorten=12,"{\varepsilon}_f"]
 			\arrow[rrdd,bend left,"f"]
 			\\
 			&
 			{}
 			\\
 			QX
 			\arrow[rr,"Qf"']
 			\arrow[rruu,"{\varepsilon}_X"']
 			&
 			{}
 			&
 			QY
 			\arrow[rr,"{\varepsilon}_Y"']
 			&&
 			Y
 		\end{tikzcd}
 		\item (Associativity condition)
 		
 	\end{itemize}
 	\noindent\begin{tikzcd}[column sep = 15, font=\fontsize{9}{6}]
 		&&X
 		\arrow[lldd, "\theta_{X}"']
 		\arrow[rrdd, "\theta_{X}"]
 		\arrow[dddd, Rightarrow, "m_{X}", shorten = 30]
 		\arrow[rr, "f"]
 		&&Y
 		\arrow[dd, Rightarrow, "\theta_{f}", shorten = 10]
 		\arrow[rrdd, "\theta_{Y}"]
 		\\
 		\\
 		QX\arrow[rrdd, "Q\theta_{X}"']
 		&&&&QX
 		\arrow[lldd, "\delta_{X}"]
 		\arrow[rr, "Qf"]
 		\arrow[dd, Rightarrow, "\delta_{f}", shorten = 10]
 		&&QY\arrow[lldd, "\delta_{Y}"]&=
 		\\
 		\\
 		&&Q^2 X
 		\arrow[rr, "Q^2 f"']
 		&&Q^2Y
 	\end{tikzcd}\begin{tikzcd}[column sep = 15, font=\fontsize{9}{6}]
 		&&X
 		\arrow[lldd, "\theta_{X}"']
 		\arrow[dd, Rightarrow, "\theta_{f}", shorten = 10]
 		\arrow[rr, "f"]
 		&&Y
 		\arrow[lldd, "\theta_{Y}"]
 		\arrow[dddd, Rightarrow, "m_{Y}", shorten = 30]
 		\arrow[rrdd, "\theta_{Y}"]
 		\\
 		\\
 		QX
 		\arrow[rrdd, "Q\theta_{X}"']
 		\arrow[rr, "Qf"]
 		&&QY
 		\arrow[dd, Rightarrow, "Q\theta_{f}", shorten = 10]
 		\arrow[rrdd, "Q\theta_{Y}"]
 		&&
 		&&QY
 		\arrow[lldd, "\delta_{Y}"]
 		\\
 		\\
 		&&Q^2 X
 		\arrow[rr, "Q^2 f"']
 		&&Q^2Y
 	\end{tikzcd}
 	
 	\item $2$-cells $\phi: \left(f, \theta_{f}\right) \rightarrow \left(g, \theta_{g}\right)$ given by $2$-cells $\phi: f \Rightarrow g$ in $\mathcal{B}$ satisfying the following equation.
 \end{itemize}
 $$\begin{tikzcd}[font=\fontsize{9}{6}]
 	X
 	\arrow[rr,bend left,"f" name=A]
 	\arrow[rr,bend right,"g"' name=B]
 	\arrow[dd,"{\theta}_X"']
 	&&
 	Y
 	\arrow[dd,"{\theta}_Y"]
 	\\
 	\\
 	QX \arrow[rr,bend right,"Qg"'name=C]
 	&&
 	QY
 	\arrow[from=A,to=B,Rightarrow,"\phi",shorten=8]
 	\arrow[from=B,to=C,Rightarrow,"{\theta}_g",shorten=15]
 \end{tikzcd} = \begin{tikzcd}[font=\fontsize{9}{6}]
 	X
 	\arrow[rr,bend left,"f" name=A]
 	\arrow[dd,"{\theta}_X"']
 	&&
 	Y
 	\arrow[dd,"{\theta}_Y"]
 	\\
 	\\
 	QX
 	\arrow[rr,bend left,"Qf" name=B]
 	\arrow[rr,bend right,"Qg"'name=C]
 	&&
 	QY
 	\arrow[from=A,to=B,Rightarrow,"{\theta}_f",shorten=13]
 	\arrow[from=B,to=C,Rightarrow,"Q{\phi}",shorten=8]
 \end{tikzcd}$$

\noindent There is a $2$-functor $F: \mathcal{B}_\theta \rightarrow \mathcal{B}$ which forgets data of the form $\theta_{f}$, and is hence clearly bijective on objects and faithful on $2$-cells. Moreover, there is a $2$-functor $U: \mathcal{B} \rightarrow \mathcal{B}_{\theta}$ which sends $1$-cells $f$ to $\left(Qf, \delta_{f}\right)$ and takes the image under $Q$ on objects and $2$-cells. It follows from the equations displayed above that the assignment $X \mapsto \theta_{X}$ extends to a pseudonatural transformation $\theta: 1_{\mathcal{B}_{\theta}} \Rightarrow UF$ with component at $\left(f, \theta_{f}\right)$ given by $\theta_{f}$, and the assignment $X \mapsto u_{X}$ extends to an invertible modification. Finally, the right tetrahedral identity follows from the unit law for the pseudocoalgebra $\left(X, \theta_{X}, u_{X}, m_{X}\right)$, while the left tetrahedral identity is coherence 3 for the pseudocomonad $\left(\mathcal{B}, Q\right)$.

\subsection{Proof of Lemma \ref{theta of Gg}}\label{proof Lemma theta of Gg}

\begin{proof}
	Beginning with the diagram depicted below left, apply the modification coherence for $\alpha$ on $\eta_{Y}$ to get below right.
	\\
	\\
	\noindent \begin{tikzcd}[column sep = 15, ,font=\fontsize{9}{6}]
		&&
		{T^3}Y
		\arrow[dddd,"{\mu}_{TY}"]
		\arrow[rr,"T{\mu}_Y"]
		&&
		{T^2}Y
		\arrow[dddd,"{\mu}_Y"]
		\arrow[rrdd,"{T^2}{\eta}_Y"]
		\\
		&&&
		{}
		\arrow[dd,Rightarrow,"{\alpha}_Y"]
		\\
		{T^2}X
		\arrow[rruu,"{T^2}g"]
		\arrow[dddd,"{\mu}_X"']
		&
		{}
		\arrow[dd,Rightarrow,"{\mu}_g"]
		&&&&
		{}
		\arrow[dd,Rightarrow,"{\mu}_{{\eta}_Y}"]
		&
		{T^3}Y
		\arrow[dddd,"{\mu}_{TY}"]
		\\
		&&&
		{}
		\\
		&
		{}
		&
		{T^2}Y
		\arrow[rr,"{\mu}_Y"]
		\arrow[rrdd,"{T^2}{\eta}_Y" description]
		\arrow[dd,Rightarrow,shorten=10,"T{\bar{g}}"']
		&&
		TY
		\arrow[rrdd,"T{\eta}_Y"]
		\arrow[dd,Rightarrow,shorten=10,"{\mu}_{{\eta}_Y}"]
		&
		{}
		\\
		\\
		TX
		\arrow[rruu,"Tg"]
		\arrow[rr,"Tg" description]
		\arrow[rrdd,"T{\eta}_X"']
		&&
		{T^2}Y
		\arrow[rr,"T{\eta}_{TY}" description]
		\arrow[rrrr,bend right,"{1}_{{T^2}Y}"']
		\arrow[rrdd,"T{\eta}_{TY}"']
		\arrow[dd,Rightarrow,shorten=10,"T{\eta}_g"']
		&&
		{T^3}Y
		\arrow[rr,"{\mu}_{TY}" description]
		\arrow[d,Rightarrow,"{\rho}_{TY}",shorten=3]
		&&
		{T^2}Y&{}
		\\
		&&&&
		{}
		\arrow[d,Rightarrow,"T{\lambda}_Y",shorten=3]
		\\
		&&
		{T^2}X
		\arrow[rr,"{T^2}g"']
		&&
		{T^3}Y
		\arrow[rruu,"T{\mu}_Y"', bend right = 20]
	\end{tikzcd}\begin{tikzcd}[column sep = 15, ,font=\fontsize{9}{6}]
		&&
		{T^3}Y
		\arrow[dddd,"{\mu}_{TY}"]
		\arrow[rr,"T{\mu}_Y"]
		\arrow[rrdd,"{T^4}{\eta}_Y" description,blue]
		&&
		{T^2}Y
		\arrow[rrdd,"{T^2}{\eta}_Y"]
		\arrow[dd,Rightarrow,shorten=10,red,"T{\mu}_{{\eta}_Y}"]
		\\
		&&&
		\\
		{T^2}X
		\arrow[rruu,"{T^2}g"]
		\arrow[dddd,"{\mu}_X"']
		&
		{}
		\arrow[dd,Rightarrow,"{\mu}_g"]
		&&
		{}
		\arrow[dd,Rightarrow,red,"{\mu}_{T{\eta}_Y}"]
		&
		\color{blue}{T^4}Y
		\arrow[rr,blue,"T{\mu}_{TY}"]
		\arrow[dddd,blue,"{\mu}_{{T^2}Y}"]
		&&
		{T^3}Y
		\arrow[dddd,"{\mu}_{TY}"]
		\\
		&&&&&
		{}
		\arrow[dd,Rightarrow,red,"{\alpha}_{TY}"]
		\\
		&
		{}
		&
		{T^2}Y
		\arrow[rrdd,"{T^2}{\eta}_Y" description]
		\arrow[dd,Rightarrow,shorten=10,"T{\bar{g}}"']
		&
		{}
		\\
		&&&&&
		{}
		\\
		TX
		\arrow[rruu,"Tg"]
		\arrow[rr,"Tg" description]
		\arrow[rrdd,"T{\eta}_X"']
		&&
		{T^2}Y
		\arrow[rr,"T{\eta}_{TY}" description]
		\arrow[rrrr,bend right,"{1}_{{T^2}Y}"']
		\arrow[rrdd,"T{\eta}_{TY}"']
		\arrow[dd,Rightarrow,shorten=10,"T{\eta}_g"']
		&&
		{T^3}Y
		\arrow[rr,"{\mu}_{TY}" description]
		\arrow[d,Rightarrow,"{\rho}_{TY}",shorten=3]
		&&
		{T^2}Y
		\\
		&&&&
		{}
		\arrow[d,Rightarrow,"T{\lambda}_Y",shorten=3]
		\\
		&&
		{T^2}X
		\arrow[rr,"{T^2}g"']
		&&
		{T^3}Y
		\arrow[rruu,"T{\mu}_Y"', bend right = 20]
	\end{tikzcd}
	\\
	\\
	\noindent Then apply pseudonaturality of $\mu$ on $\bar{g}$ to reduce above right to below left, followed by coherence $5$ for the pseudomonad $\left(A, T\right)$ to reduce below left to below right.
	\\
	\\
	\noindent \begin{tikzcd}[column sep = 15, ,font=\fontsize{9}{6}]
		&&
		{T^3}Y
		\arrow[rr,"T{\mu}_Y"]
		\arrow[rrdd,"{T^4}{\eta}_Y" description]
		\arrow[dd,shorten=10,Rightarrow,"{T^2}{\bar{g}}"',red]
		&&
		{T^2}Y
		\arrow[rrdd,"{T^2}{\eta}_Y"]
		\arrow[dd,Rightarrow,shorten=10,"T{\mu}_{{\eta}_Y}"]
		\\
		&&&
		\\
		{T^2}X
		\arrow[rruu,"{T^2}g"]
		\arrow[dddd,"{\mu}_X"']
		\arrow[rr,blue,"{T^2}g"']
		&&
		\color{blue}{T^3}Y
		\arrow[rr,blue,"{T^2}{\eta}_{TY}"']
		\arrow[dddd,blue,"{\mu}_{TY}"]
		&&
		{T^4}Y
		\arrow[rr,"T{\mu}_{TY}"]
		\arrow[dddd,"{\mu}_{{T^2}Y}"]
		&&
		{T^3}Y
		\arrow[dddd,"{\mu}_{TY}"]
		\\
		&
		{}
		\arrow[dd,Rightarrow,"{\mu}_g",red]
		&&
		{}
		\arrow[dd,Rightarrow,"{\mu}_{{\eta}_{TY}}",red]
		&&
		{}
		\arrow[dd,Rightarrow,"{\alpha}_{TY}"]
		\\
		\\
		&
		{}
		&&
		{}
		&&
		{}
		\\
		TX
		\arrow[rr,"Tg" description]
		\arrow[rrdd,"T{\eta}_X"']
		&&
		{T^2}Y
		\arrow[rr,"T{\eta}_{TY}" description]
		\arrow[rrrr,bend right,"{1}_{{T^2}Y}"']
		\arrow[rrdd,"T{\eta}_{TY}"']
		\arrow[dd,Rightarrow,shorten=10,"T{\eta}_g"']
		&&
		{T^3}Y
		\arrow[rr,"{\mu}_{TY}" description]
		\arrow[d,Rightarrow,"{\rho}_{TY}",shorten=3]
		&&
		{T^2}Y&{}
		\\
		&&&&
		{}
		\arrow[d,Rightarrow,"T{\lambda}_Y",shorten=3]
		\\
		&&
		{T^2}X
		\arrow[rr,"{T^2}g"']
		&&
		{T^3}Y
		\arrow[rruu,"T{\mu}_Y"', bend right =20]
	\end{tikzcd}\begin{tikzcd}[column sep = 15, ,font=\fontsize{9}{6}]
		&&
		{T^3}Y
		\arrow[rr,"T{\mu}_Y"]
		\arrow[rrdd,"{T^4}{\eta}_Y" description]
		\arrow[dd,shorten=10,Rightarrow,"{T^2}{\bar{g}}"']
		&&
		{T^2}Y
		\arrow[rrdd,"{T^2}{\eta}_Y"]
		\arrow[dd,Rightarrow,shorten=10,"T{\mu}_{{\eta}_Y}"]
		\\
		&&&
		\\
		{T^2}X
		\arrow[rruu,"{T^2}g"]
		\arrow[dddd,"{\mu}_X"']
		\arrow[rr,"{T^2}g"']
		&&
		{T^3}Y
		\arrow[rr,"{T^2}{\eta}_{TY}"']
		\arrow[dddd,"{\mu}_{TY}"]
		\arrow[rrrr,"{1}_{{T^3}Y}"',blue,bend right = 40]
		&&
		{T^4}Y
		\arrow[rr,"T{\mu}_{TY}"]
		\arrow[d,Rightarrow,shorten=3,red,"{T\rho}_{TY}"]
		&&
		{T^3}Y
		\arrow[dddd,"{\mu}_{TY}"]
		\\
		&
		{}
		\arrow[dd,Rightarrow,"{\mu}_g"]
		&&&
		{}
		\\
		\\
		&
		{}
		&&&
		\color{red}=
		\\
		TX
		\arrow[rr,"Tg" description]
		\arrow[rrdd,"T{\eta}_X"']
		&&
		{T^2}Y
		\arrow[rrrr,bend right,"{1}_{{T^2}Y}"]
		\arrow[rrdd,"T{\eta}_{TY}"']
		\arrow[dd,Rightarrow,shorten=10,"T{\eta}_g"']
		&&&&
		{T^2}Y
		\\
		&&&&
		{}
		\arrow[d,Rightarrow,"T{\lambda}_Y",shorten=3]
		\\
		&&
		{T^2}X
		\arrow[rr,"{T^2}g"']
		&&
		{T^3}Y
		\arrow[rruu,"T{\mu}_Y"', bend right = 20]
	\end{tikzcd}
	\\
	\\
	Apply pseudonaturality of $\mu$ on $\lambda_{Y}$ to reduce above right to below left. Finally, apply pseudonaturality of $\mu$ on $\eta_{g}$ to reduce below left to below right and complete the proof.
	\\
	\\ 
	\noindent \begin{tikzcd}[column sep = 15, ,font=\fontsize{9}{6}]
		&&
		{T^3}Y
		\arrow[rr,"T{\mu}_Y"]
		\arrow[rrdd,"{T^4}{\eta}_Y" description]
		\arrow[dd,shorten=10,Rightarrow,"{T^2}{\bar{g}}"']
		&&
		{T^2}Y
		\arrow[rrdd,"{T^2}{\eta}_Y"]
		\arrow[dd,Rightarrow,shorten=10,"T{\mu}_{{\eta}_Y}"]
		\\
		&&&
		\\
		{T^2}X
		\arrow[rruu,"{T^2}g"]
		\arrow[dddd,"{\mu}_X"']
		\arrow[rr,"{T^2}g"']
		&&
		{T^3}Y
		\arrow[rr,"{T^2}{\eta}_{TY}"']
		\arrow[dddd,"{\mu}_{TY}"]
		\arrow[rrrr,"{1}_{{T^3}Y}"',bend right]
		\arrow[rrddd,blue,"{T^2}{\eta}_{TY}"' description]
		&&
		{T^4}Y
		\arrow[rr,"T{\mu}_{TY}"]
		\arrow[d,Rightarrow,shorten=3,"{T\rho}_{TY}"]
		&&
		{T^3}Y
		\arrow[dddd,"{\mu}_{TY}"]
		\\
		&
		{}
		\arrow[dd,Rightarrow,"{\mu}_g"]
		&&&
		{}
		\arrow[dd,Rightarrow,shorten=10,red,"{T^2}{\lambda}_Y"]
		\\
		&&&
		{}
		\arrow[dd,Rightarrow,red,"{\mu}_{{\eta}_{TY}}"]
		&&
		{}
		\arrow[dd,Rightarrow,red,"{\mu}_{{\mu}_Y}"]
		\\
		&
		{}
		&&&
		\color{blue}{T^4}Y
		\arrow[rruuu,blue,"{T^2}{\mu}_Y" description]
		\arrow[ddd,blue,"{\mu}_{{T^2}Y}"]
		\\
		TX
		\arrow[rr,"Tg" description]
		\arrow[rrdd,"T{\eta}_X"']
		&&
		{T^2}Y
		\arrow[rrdd,"T{\eta}_{TY}"']
		\arrow[dd,Rightarrow,shorten=10,"T{\eta}_g"']
		&
		{}
		&&
		{}
		&
		{T^2}Y&{}
		\\
		\\
		&&
		{T^2}X
		\arrow[rr,"{T^2}g"']
		&&
		{T^3}Y
		\arrow[rruu,"T{\mu}_Y"']
	\end{tikzcd}\begin{tikzcd}[column sep = 15, ,font=\fontsize{9}{6}]
		&&
		{T^3}Y
		\arrow[rr,"T{\mu}_Y"]
		\arrow[rrdd,"{T^4}{\eta}_Y" description]
		\arrow[dd,shorten=10,Rightarrow,"{T^2}{\bar{g}}"']
		&&
		{T^2}Y
		\arrow[rrdd,"{T^2}{\eta}_Y"]
		\arrow[dd,Rightarrow,shorten=10,"T{\mu}_{{\eta}_Y}"]
		\\
		&&&
		\\
		{T^2}X
		\arrow[rruu,"{T^2}g"]
		\arrow[dddd,"{\mu}_X"']
		\arrow[rr,"{T^2}g"']
		\arrow[rrddd,blue,"{T^2}{\eta}_X" description]
		&&
		{T^3}Y
		\arrow[rr,"{T^2}{\eta}_{TY}"']
		\arrow[rrrr,"{1}_{{T^3}Y}"',bend right]
		\arrow[rrddd,"{T^2}{\eta}_{TY}"' description]
		\arrow[ddd,Rightarrow,shorten=15,red,"{T^2}{\eta}_g"']
		&&
		{T^4}Y
		\arrow[rr,"T{\mu}_{TY}"]
		\arrow[d,Rightarrow,shorten=3,"{T\rho}_{TY}"]
		&&
		{T^3}Y
		\arrow[dddd,"{\mu}_{TY}"]
		\\
		&&&&
		{}
		\arrow[dd,Rightarrow,shorten=10,"{T^2}{\lambda}_Y"]
		\\
		&
		{}
		\arrow[dd,Rightarrow,red,"{\mu}_{{\eta}_X}"]
		&&&&
		{}
		\arrow[dd,Rightarrow,"{\mu}_{{\mu}_Y}"]
		\\
		&&
		\color{blue}{T^3}Y
		\arrow[rr,blue,"{T^3}g"]
		\arrow[ddd,blue,"{\mu}_{TX}"]
		&
		{}
		\arrow[ddd,Rightarrow,shorten=10,red,"{\mu}_{Tg}"]
		&
		{T^4}Y
		\arrow[rruuu,"{T^2}{\mu}_Y" description]
		\arrow[ddd,"{\mu}_{{T^2}Y}"]
		\\
		TX
		\arrow[rrdd,"T{\eta}_X"']
		&
		{}
		&&&&
		{}
		&
		{T^2}Y
		\\
		\\
		&&
		{T^2}X
		\arrow[rr,"{T^2}g"']
		&
		{}
		&
		{T^3}Y
		\arrow[rruu,"T{\mu}_Y"']
	\end{tikzcd}
\end{proof}

\subsection{Proof of Proposition \ref{cones to theta pseudonat functor}}\label{proof Proposition cones to theta pseudonat functor}

\begin{proof}
	\noindent We need to verify that $\underline{J}\left(g, \bar{g}\right)$ is well-defined as a morphism equipped with a thunking. For the unit condition, begin with the pasting below.
	
	$$\begin{tikzcd}[font=\fontsize{9}{6}, row sep = 15]
		TX
		\arrow[dddd,"T{\eta}_X"']
		\arrow[rr,"Tg"]
		\arrow[rdd,"Tg"]
		&
		{} \arrow[dd,Rightarrow,shorten=10,shift left=10,"T\bar{g}"]
		&
		{T^2}Y
		\arrow[rdd,"{T^2}{\eta}_Y"]
		\arrow[rr,"{\mu}_Y"]
		&
		{} \arrow[dd,Rightarrow,shorten=10,shift left=5,"{\mu}_{{\eta}_Y}"]
		&
		TY
		\arrow[dddd,"T{\eta}_Y"]
		\arrow[rrdddd,bend left,"1_{TY}"]
		\\
		&&&&& {} \arrow[dd,Rightarrow,shorten=10,shift right=4,"{\rho}_Y"]
		\\
		&
		{T^2}Y
		\arrow[rr,"T{\eta}_{TY}"]
		\arrow[rrrdd,"1_{{T^2}Y}" description]
		\arrow[rdd,"T{\eta}_{TY}"']
		\arrow[dd,Rightarrow,shorten=10,shift right=4,"T{\eta}_g"']
		&&
		{T^3}Y
		\arrow[rdd,"{\mu}_{TY}"]
		\arrow[d,Rightarrow,shift right=4,shorten=3,"{\rho}_{TY}"']
		\\
		&&
		{} \arrow[d,Rightarrow,shorten=3,shift left=4,"T{\lambda}_Y"]
		&
		{}
		&& {}
		\\
		{T^2}X
		\arrow[rr,"{T^2}g"']
		&
		{}
		&
		{T^3}Y
		\arrow[rr,"T{\mu}_Y"']
		&&
		{T^2}Y
		\arrow[rr,"{\mu}_Y"']
		&&
		TY
	\end{tikzcd}$$
	\noindent Apply coherence $5$ for $\left(\mathcal{B}, T\right)$ to reduce the pasting above to the pasting below.
	
	$$\begin{tikzcd}[font=\fontsize{9}{6}, row sep = 15]
		TX
		\arrow[dddd,"T{\eta}_X"']
		\arrow[rr,"Tg"]
		\arrow[rdd,"Tg"]
		&
		{} \arrow[dd,Rightarrow,shorten=10,shift left=10,"T\bar{g}"]
		&
		{T^2}Y
		\arrow[rdd,"{T^2}{\eta}_Y"]
		\arrow[rrrr,"1_{{T^2}Y}"]
		&&
		{} \arrow[dd,Rightarrow,shorten=10,"T{\rho}_Y",red]
		&&
		{T^2}Y
		\arrow[dddd,"{\mu}_Y"]
		\\
		\\
		&
		{T^2}Y
		\arrow[rr,"T{\eta}_{TY}"]
		\arrow[rrrdd,"1_{{T^2}Y}" description]
		\arrow[rdd,"T{\eta}_{TY}"']
		\arrow[dd,Rightarrow,shorten=10,shift right=4,"T{\eta}_g"']
		&&
		{T^3}Y
		\arrow[rdd,"{\mu}_{TY}"]
		\arrow[d,Rightarrow,shift right=4,shorten=3,"{\rho}_{TY}"']
		\arrow[rrruu,"T{\mu}_Y"',blue,bend right]
		&
		{}
		&
		{}
		\arrow[dd,Rightarrow,shorten=13,red,"{\alpha}_Y"']
		\\
		&&
		{} \arrow[d,Rightarrow,shorten=3,shift left=4,"T{\lambda}_Y"]
		&
		{}
		&& {}
		\\
		{T^2}X
		\arrow[rr,"{T^2}g"']
		&
		{}
		&
		{T^3}Y
		\arrow[rr,"T{\mu}_Y"']
		&&
		{T^2}Y
		\arrow[rr,"{\mu}_Y"']
		&
		{}
		&
		TY
	\end{tikzcd}$$
	
	\noindent Apply the unit coherence for $\left(g, \bar{g}\right)$ to reduce the pasting above to the pasting below.
	
	$$\begin{tikzcd}[font=\fontsize{9}{6}, row sep = 15]
		TX
		\arrow[dddd,"T{\eta}_X"']
		\arrow[rdd,"Tg"]
		&&&&
		{} \arrow[dd,Rightarrow,shorten=10,"T{\lambda}_Y",red]
		&&
		{T^2}Y
		\arrow[dddd,"{\mu}_Y"]
		\\
		\\
		&
		{T^2}Y
		\arrow[rr,"T{\eta}_{TY}"]
		\arrow[rrrdd,"1_{{T^2}Y}" description]
		\arrow[rdd,"T{\eta}_{TY}"']
		\arrow[dd,Rightarrow,shorten=10,shift right=4,"T{\eta}_g"']
		\arrow[rrrrruu,bend left=15,"1_{{T^2}Y}" near start]
		&&
		{T^3}Y
		\arrow[rdd,"{\mu}_{TY}"]
		\arrow[d,Rightarrow,shift right=4,shorten=3,"{\rho}_{TY}"']
		\arrow[rrruu,"T{\mu}_Y"',bend right]
		&
		{}
		&
		{}
		\arrow[dd,Rightarrow,shorten=13,"{\alpha}_Y"']
		\\
		&&
		{} \arrow[d,Rightarrow,shorten=3,shift left=4,"T{\lambda}_Y"]
		&
		{}
		&& {}
		\\
		{T^2}X
		\arrow[rr,"{T^2}g"']
		&
		{}
		&
		{T^3}Y
		\arrow[rr,"T{\mu}_Y"']
		&&
		{T^2}Y
		\arrow[rr,"{\mu}_Y"']
		&
		{}
		&
		TY
	\end{tikzcd}$$
	
	\noindent Apply coherence $2$ for the pseudomonad $\left(\mathcal{B}, T\right)$ to reduce the pasting above to the pasting below.
	
	$$\begin{tikzcd}[font=\fontsize{9}{6}, row sep = 15]
		TX
		\arrow[dd,"T{\eta}_X"']
		\arrow[r,"Tg"]
		&
		{T^2}Y
		\arrow[rrrdd,"1_{{T^2}Y}" description]
		\arrow[rdd,"T{\eta}_{TY}"']
		\arrow[dd,Rightarrow,shorten=10,shift right=4,"T{\eta}_g"']
		\\
		&&
		{} \arrow[d,Rightarrow,shorten=3,shift left=4,"T{\lambda}_Y"]
		&
		{}
		&& {}
		\\
		{T^2}X
		\arrow[rr,"{T^2}g"']
		&
		{}
		&
		{T^3}Y
		\arrow[rr,"T{\mu}_Y"']
		&&
		{T^2}Y
		\arrow[rr,"{\mu}_Y"']
		&
		{}
		&
		TY
	\end{tikzcd}$$
	
	\noindent Apply coherence $2$ once again to reduce the pasting above to the pasting below.
	
	$$\begin{tikzcd}[font=\fontsize{9}{6}, row sep = 15]
		TX
		\arrow[dd,"T{\eta}_X"']
		\arrow[r,"Tg"]
		&
		{T^2}Y
		\arrow[rdd,"T{\eta}_{TY}"']
		\arrow[dd,Rightarrow,shorten=10,shift right=4,"T{\eta}_g"']
		\arrow[rrr,"1_{{T^2}Y}"]
		&
		{} \arrow[dd,Rightarrow,shorten=10,red,"{\rho}_{TY}"]
		&&
		{T^2}Y
		\arrow[rrdd,"{\mu}_Y"]
		\arrow[dd,Rightarrow,shorten=10,red,"{\alpha}_Y"]
		\\
		&&&
		{}
		&& {}
		\\
		{T^2}X
		\arrow[rr,"{T^2}g"']
		&
		{}
		&
		{T^3}Y
		\arrow[rr,"T{\mu}_Y"']
		\arrow[rruu,blue,"{\mu}_{TY}" description]
		&&
		{T^2}Y
		\arrow[rr,"{\mu}_Y"']
		&
		{}
		&
		TY
	\end{tikzcd}$$
	
	\noindent Finally, apply the modification coherence for $\rho$ on $g$ to reduce the pasting above to the pasting below.
	
	$$\begin{tikzcd}[font=\fontsize{9}{6}, row sep = 15]
		TX
		\arrow[rr,"T{\eta}_X"]
		\arrow[rrdd,"1_{TX}"']
		&
		{}
		\arrow[d,Rightarrow,shorten=3,shift left=3,red,"{\rho}_X"]
		&
		{T^2}X
		\arrow[rr,"{T^2}g"]
		\arrow[dd,"{\mu}_X",blue]
		&
		{} \arrow[dd,shorten=15,Rightarrow,red,"{\mu}_g"]
		&
		{T^3}Y
		\arrow[rr,"T{\mu}_Y"]
		\arrow[dd,"{\mu}_{TY}"]
		&
		{}
		\arrow[dd,shorten=15,Rightarrow,"{\alpha}_Y"]
		&
		{T^2}Y
		\arrow[dd,"{\mu}_Y"]
		\\
		& {}
		\\
		&&
		TX
		\arrow[rr,"Tg"']
		&
		{}
		&
		{T^2}Y
		\arrow[rr,"{\mu}_Y"']
		&
		{}
		&
		TY
	\end{tikzcd}$$
	
	\noindent We refer the reader to Appendix 11.2 of \cite{Miranda PhD} for the proof of the associativity condition, which is similar but longer than the proof of the unit condition. Given $\phi: \left(g, \bar{g}\right) \rightarrow \left(h, \bar{h}\right)$ in $\mathbf{Cone}_{T}\left(X, Y\right)$, the fact that $\underline{J}\left(\phi\right)$ is thunkable follows from the condition on morphisms in $\mathbf{Cone}_{T}\left(X, Y\right)$ and pseudonaturality of $T\eta$ on $\phi$. Functoriality of $\underline{J}$ is clear so the proof of Proposition \ref{cones to theta pseudonat functor} is complete.
\end{proof}

\subsection{Proof of Lemma \ref{cones to free theta pseudonat essentially surjective}}\label{proof Lemma cones to free theta pseudonat essentially surjective}

\begin{proof}
	\noindent We give details only for the cocycle condition, referring the reader to Appendix 11.3 of \cite{Miranda PhD} for the proof of the unit condition. Begin with the pasting depicted below left. Apply pseudonaturality of $\eta$ on $\theta_{p}$ to arrive at the pasting below right.
	
	$$\begin{tikzcd}[column sep = 13, ,font=\fontsize{9}{6}, row sep = 15]
		&&&
		TY
		\arrow[rrdd,"T{\eta}_Y", bend left=20]
		\arrow[dd,shorten=8,"{\theta}_p",Rightarrow]
		\\
		\\
		&
		TX
		\arrow[rruu,bend left=20,"p"]
		\arrow[rr,"T{\eta}_X" description]
		&
		{}
		\arrow[dd,Rightarrow,shorten=15,shift right=10,"{\eta}_{{\eta}_X}"]
		&
		{T^2}X
		\arrow[rr,"Tp" description]
		&
		{}
		\arrow[dd,Rightarrow,shorten=15,shift right=10,"{\eta}_{p}"]
		&
		{T^2}Y
		\arrow[rdd,"{T^2}{\eta}_Y"]
		\arrow[dddd,shorten=30,Rightarrow,"{\eta}_{T{\eta}_Y}"]
		\\
		&&&&
		{}
		\\
		X
		\arrow[ruu,"{\eta}_X"]
		\arrow[rdd,"{\eta}_X"']
		\arrow[rr,"{\eta}_X" description]
		&&
		TX
		\arrow[ruu,"{\eta}_{TX}" description]
		\arrow[rdd,"T{\eta}_X" description]
		\arrow[dd,Rightarrow,shorten=15,shift right=10,"{\eta}_{{\eta}_X}"]
		\arrow[rr,"p" description]
		&
		{}
		&
		TY
		\arrow[ruu,"{\eta}_{TY}" description]
		\arrow[rdd,"T{\eta}_Y" description]
		\arrow[dd,Rightarrow,shorten=15,shift right=10,"{\theta}_{p}"]
		&&
		{T^3}Y
		\\
		\\
		&
		TX
		\arrow[rr,"{\eta}_{TX}" description]
		\arrow[rrdd,"p"',bend right=20]
		&
		{}
		&
		{T^2}X
		\arrow[rr,"Tp" description]
		\arrow[dd,shorten=8,Rightarrow,"{\eta}_p"]
		&
		{}
		&
		{T^2}Y
		\arrow[ruu,"{\eta}_{{T^2}Y}"']
		\\
		\\
		&&&
		TY
		\arrow[rruu,"{\eta}_{TY}"',bend right=20]
	\end{tikzcd}\begin{tikzcd}[column sep = 13, ,font=\fontsize{9}{6}, row sep = 15]
		&&&
		TY
		\arrow[rrdd,"T{\eta}_Y", bend left=20]
		\arrow[dd,shorten=8,"{\theta}_p",Rightarrow]
		\\
		\\
		&
		TX
		\arrow[rruu,bend left=20,"p"]
		\arrow[rr,"T{\eta}_X" description]
		&
		{}
		\arrow[dd,Rightarrow,shorten=15,shift right=10,"{\eta}_{{\eta}_X}"]
		&
		{T^2}X
		\arrow[rr,"Tp" description]
		\arrow[rdd,"{T^2}{\eta}_X" description,blue]
		\arrow[dddd,shorten=30,Rightarrow,red,"{\eta}_{{T\eta}_X}"]
		&
		{}
		\arrow[dd,Rightarrow,shorten=15,shift left=10,"T{\theta}_{p}",red]
		&
		{T^2}Y
		\arrow[rdd,"{T^2}{\eta}_Y"]
		\\
		&&&&
		{}
		\\
		X
		\arrow[ruu,"{\eta}_X"]
		\arrow[rdd,"{\eta}_X"']
		\arrow[rr,"{\eta}_X" description]
		&&
		TX
		\arrow[ruu,"{\eta}_{TX}" description]
		\arrow[rdd,"T{\eta}_X" description]
		\arrow[dd,Rightarrow,shorten=15,shift right=10,"{\eta}_{{\eta}_X}"]
		&
		{}
		&
		\color{blue}{T^3}X
		\arrow[rr,"{T^2}p" description,blue]
		\arrow[dd,Rightarrow,shorten=15,shift left=10,"{\eta}_{Tp}",red]
		&&
		{T^3}Y
		\\
		\\
		&
		TX
		\arrow[rr,"{\eta}_{TX}" description]
		\arrow[rrdd,"p"',bend right=20]
		&
		{}
		&
		{T^2}X
		\arrow[rr,"Tp" description]
		\arrow[ruu,"{\eta}_{{T^2}X}" description,blue]
		\arrow[dd,shorten=8,Rightarrow,"{\eta}_p"]
		&
		{}
		&
		{T^2}Y
		\arrow[ruu,"{\eta}_{{T^2}Y}"']
		\\
		\\
		&&&
		TY
		\arrow[rruu,"{\eta}_{TY}"',bend right=20]
	\end{tikzcd}$$

	\noindent Apply pseudonaturality of $\eta$ on $\eta_{X}$ to reduce the pasting above right to the pasting below left. Apply the associativity coherence for $\left(\left(p, \bar{p}\right), \theta_{p}\right)$ as a  $1$-cell in $\left({\B}_{T}\right)_{\theta}$ to reduce the pasting below left to the pasting below right.
	
	$$\begin{tikzcd}[column sep = 13, ,font=\fontsize{9}{6}, row sep = 15]
		&&&
		TY
		\arrow[rrdd,"T{\eta}_Y", bend left=20]
		\arrow[dd,shorten=8,"{\theta}_p",Rightarrow]
		\\
		\\
		&
		TX
		\arrow[rruu,bend left=20,"p"]
		\arrow[rdd,"T{\eta}_X" description,blue]
		\arrow[rr,"T{\eta}_X" description]
		\arrow[dddd,shorten=30,Rightarrow,"{\eta}_{{\eta}_X}",red]
		&
		{}
		\arrow[dd,Rightarrow,shorten=15,shift left=10,"T{\eta}_{{\eta}_{X}}",red]
		&
		{T^2}X
		\arrow[rr,"Tp" description]
		\arrow[rdd,"{T^2}{\eta}_X" description]
		&
		{}
		\arrow[dd,Rightarrow,shorten=15,shift left=10,"T{\theta}_{p}"]
		&
		{T^2}Y
		\arrow[rdd,"{T^2}{\eta}_Y"]
		\\
		&&&&
		{}
		\\
		X
		\arrow[ruu,"{\eta}_X"]
		\arrow[rdd,"{\eta}_X"']
		&&
		\color{blue}{T^2}X
		\arrow[rr,"T{\eta}_{TX}" description,blue]
		\arrow[dd,Rightarrow,shorten=15,shift left=10,"{\eta}_{{\eta}_{TX}}",red]
		&
		{}
		&
		{T^3}X
		\arrow[rr,"{T^2}p" description]
		\arrow[dd,Rightarrow,shorten=15,shift left=10,"{\eta}_{Tp}"]
		&&
		{T^3}Y
		\\
		\\
		&
		TX
		\arrow[ruu,"{\eta}_{TX}" description,blue]
		\arrow[rr,"{\eta}_{TX}" description]
		\arrow[rrdd,"p"',bend right=20]
		&
		{}
		&
		{T^2}X
		\arrow[rr,"Tp" description]
		\arrow[ruu,"{\eta}_{{T^2}X}" description]
		\arrow[dd,shorten=8,Rightarrow,"{\eta}_p"]
		&
		{}
		&
		{T^2}Y
		\arrow[ruu,"{\eta}_{{T^2}Y}"']
		\\
		\\
		&&&
		TY
		\arrow[rruu,"{\eta}_{TY}"',bend right=20]
	\end{tikzcd}\begin{tikzcd}[column sep = 13, ,font=\fontsize{9}{6}, row sep = 15]
		&&
		{}
		\arrow[dddd,shorten=30,Rightarrow,red,"{\theta}_p"]
		&
		TY
		\arrow[dd,"T{\eta}_Y" description, blue]
		\arrow[rrdd,"T{\eta}_Y", bend left=20]
		&
		{}
		\arrow[ddd,shorten=22,Rightarrow,red,"T{\eta}_{{\eta}_Y}"]
		\\
		\\
		&
		TX
		\arrow[rruu,bend left=20,"p"]
		\arrow[rdd,"T{\eta}_X" description]
		\arrow[dddd,shorten=30,Rightarrow,"{\eta}_{{\eta}_X}"]
		&&
		\color{blue}{T^2}Y
		\arrow[rrrdd,"T{\eta}_{TY}" description,blue]
		\arrow[dd,shorten=15,Rightarrow,red,"T{\eta}_p"]
		&&
		{T^2}Y
		\arrow[rdd,"{T^2}{\eta}_Y"]
		\\
		&&&&
		{}
		\\
		X
		\arrow[ruu,"{\eta}_X"]
		\arrow[rdd,"{\eta}_X"']
		&&
		{T^2}X
		\arrow[rr,"T{\eta}_{TX}" description]
		\arrow[ruu,blue,"Tp" description]
		\arrow[dd,Rightarrow,shorten=15,shift left=10,"{\eta}_{{\eta}_{TX}}"]
		&
		{}
		&
		{T^3}X
		\arrow[rr,"{T^2}p" description]
		\arrow[dd,Rightarrow,shorten=15,shift left=10,"{\eta}_{Tp}"]
		&&
		{T^3}Y
		\\
		\\
		&
		TX
		\arrow[ruu,"{\eta}_{TX}" description]
		\arrow[rr,"{\eta}_{TX}" description]
		\arrow[rrdd,"p"',bend right=20]
		&
		{}
		&
		{T^2}X
		\arrow[rr,"Tp" description]
		\arrow[ruu,"{\eta}_{{T^2}X}" description]
		\arrow[dd,shorten=8,Rightarrow,"{\eta}_p"]
		&
		{}
		&
		{T^2}Y
		\arrow[ruu,"{\eta}_{{T^2}Y}"']
		\\
		\\
		&&&
		TY
		\arrow[rruu,"{\eta}_{TY}"',bend right=20]
	\end{tikzcd}$$
	
	\noindent Finally, apply pseudonaturality of $\eta$ on $p$ to reduce the pasting above right to the pasting below, and observe that this completes the proof.
	
	$$\begin{tikzcd}[font=\fontsize{9}{6}, row sep = 15]
		X
		\arrow[rr,"{\eta}_X"]
		\arrow[dd,"{\eta}_X"']
		&
		{}
		\arrow[dd,shorten=12,Rightarrow,"{\eta}_{{\eta}_X}"]
		&
		TX
		\arrow[rr,"p"]
		\arrow[dd,"T{\eta}_X" description]
		&
		{}
		\arrow[dd,shorten=12,Rightarrow,"{\theta}_{p}"]
		&
		TY
		\arrow[rr,"T{\eta}_Y"]
		\arrow[dd,"T{\eta}_Y" description]
		&
		{}
		\arrow[dd,shorten=12,Rightarrow,"T{\eta}_{{\eta}_Y}"]
		&
		{T^2}Y
		\arrow[dd,"{T^2}{\eta}_{Y}"]
		\\
		\\
		TX
		\arrow[rr,"{\eta}_{TX}" description]
		\arrow[rrdd,bend right=20,"p"']
		&
		{}
		&
		{T^2}X
		\arrow[rr,"Tp" description]
		\arrow[dd,shorten=12,Rightarrow,"{\eta}_{p}",red]
		&
		{}
		&
		{T^2}Y
		\arrow[rr,"T{\eta}_{TY}" description]
		\arrow[dd,shorten=12,Rightarrow,"{\eta}_{{\eta}_{TY}}",red]
		&
		{}
		&
		{T^3}Y
		\\
		\\
		&&
		TY
		\arrow[rr,"{\eta}_{TY}"']
		\arrow[rruu,"{\eta}_{TY}" description,blue]
		&&
		{T^2}Y
		\arrow[rruu,bend right=20]
	\end{tikzcd}$$
\end{proof}

\subsection{Proof of Proposition \ref{cones to free theta pseudonat an equivalence}}\label{proof cones to free theta pseudonat an equivalence}

For essential surjectivity on objects, we claim that the image under $\underline{J}_{X, Y}$ of the cone from $X$ to $Y$ defined in Lemma \ref{cones to free theta pseudonat essentially surjective} is isomorphic to $\left(\left(p, \bar{p}\right), \theta_{p}\right)$, via the $2$-cell of pseudoalgebras depicted below.

$$\begin{tikzcd}[row sep = 15, font=\fontsize{9}{6}]
	&&
	{}
	\arrow[dd,Rightarrow,shorten=8,"{\rho}_X"']
	&
	TX
	\arrow[rrrdd,bend left=20,"p"]
	&
	{}
	\arrow[dd,Rightarrow,shorten=8,"\bar{p}"']
	\\
	\\
	TX
	\arrow[rrruu,bend left=20,"1_{TX}"]
	\arrow[rr,"T{\eta}_X"']
	&&
	{T^2}X
	\arrow[ruu,"{\mu}_X"']
	\arrow[rr,"Tp"']
	&
	{}
	&
	{T^2}Y
	\arrow[rr,"{\mu}_Y"']
	&&
	TY
\end{tikzcd}$$

\noindent Thunkability of this $2$-cell follows from a diagram chase given in Appendix 11.4 of \cite{Miranda PhD}. For fully faithfulness, we already know that arbitrary $2$-cells $\phi$ from $g: X \rightarrow TY$ to $h: X \rightarrow TY$ are in bijection with $2$-cells of free pseudoalgebras $\left(\mu_{Y}, \alpha_{Y}\right).\left(Tg, \mu_{g}\right) \Rightarrow \left(\mu_{Y}, \alpha_{Y}\right).\left(Th, \mu_{h}\right)$. This is part of the biequivalence between the Kleisli bicategory of $\left(A, T\right)$ and ${\B}_{T}$. It therefore suffices to observe that if $\phi: \left(\left(p,\bar{p}\right), \theta_{p}\right) \Rightarrow \left(\left(q, \bar{q}\right), \theta_{q}\right)$ is thunkable then $\psi.\eta_{X}$ is a morphism in $\mathbf{Cone}_{T}\left(X, Y\right)$. This follows from pseudonaturality of $\eta$ on $\phi$.

\subsection{Proof of Proposition \ref{G isomorphic to restriction along eta}}\label{Proof of Proposition G isomorphic to restriction along eta}

For part (1), begin with the pasting depicted below top left and apply the modification coherence for $\rho$ on $\eta_{Y}$ to reduce to the pasting depicted below top right. Finally, apply coherence $3$ for $\left(\mathcal{B} ,T\right)$ to arrive at the pasting depicted below and observe that this completes the proof. For part $(2)$, naturality follows by middle-four interchange.

$$\begin{tikzcd}[column sep = 15, ,font=\fontsize{9}{6}]
	TX
	\arrow[rr,"Tg"]
	\arrow[dddd,"T{\eta}_X"']
	&&
	TY
	\arrow[rr,"T{\eta}_Y" description]
	\arrow[rrrr,bend left,"1_{TY}" name=A]
	\arrow[dd,"T{\eta}_Y"]
	\arrow[dddd,bend right,"T{\eta}_Y" description]
	&
	{}
	\arrow[dd,Rightarrow,shorten=15,"T{\eta}_{{\eta}_Y}"]
	&
	{T^2}Y
	\arrow[from=A,Rightarrow,"{\rho}_Y",shorten=3]
	\arrow[rr,"{\mu}_X"']
	\arrow[dd,"{T^2}{\eta}_Y"]
	&
	{} \arrow[dd,Rightarrow,shorten=15,"{\mu}_{{\eta}_Y}"]
	&
	TY
	\arrow[dddd,"T{\eta}_Y"]
	\\
	&
	{}
	\arrow[dd,Rightarrow,shorten=10,"T{\eta}_g"',shift right=5]
	\\
	&&
	{T^2}Y
	\arrow[rr,"T{\eta}_{TY}"]
	\arrow[rrdd,"T{\eta}_{TY}" description,near end]
	\arrow[rrrrdd,"1_{{T^2}Y}" description,near start]
	\arrow[dd,Rightarrow,shorten=10,"T{\eta}_{{\eta}_Y}"]
	&
	{}
	&
	{T^3}Y
	\arrow[rrdd,"{\mu}_{TY}"]
	\arrow[d,Rightarrow,shorten=3,"{\rho}_{TY}"']
	&
	{}
	\\
	&
	{}
	&&&
	{}
	\arrow[d,Rightarrow,shorten=3,"T{\lambda}_{Y}"]
	\\
	{T^2}X
	\arrow[rr,"{T^2}g"']
	&&
	{T^2}Y
	\arrow[rr,"{T^2}{\eta}_Y"']
	&&
	{T^3}Y
	\arrow[rr,"T{\mu}_Y"']
	&&
	{T^2}Y&{}
\end{tikzcd}\begin{tikzcd}[font=\fontsize{9}{6}]
	TX
	\arrow[rr,"Tg"]
	\arrow[dddd,"T{\eta}_X"']
	&&
	TY
	\arrow[dd,"T{\eta}_Y"]
	\arrow[ddddl,bend right,"T{\eta}_Y"']
	\\
	&
	{}
	\arrow[dd,Rightarrow,shorten=10,"T{\eta}_g"', shift right = 10]
	\\
	&&
	{T^2}Y
	\arrow[dd,"T{\eta}_{TY}" description,near end]
	\arrow[rdd, bend left = 30, "1_{{T^2}Y}",near start]
	\arrow[ddl,Rightarrow,shorten=10,"T{\eta}_{{\eta}_Y}"']
	&
	{}
	&&
	{}
	\\
	&
	{}
	&
	{}
	\arrow[d,Rightarrow,shorten=3,"T{\lambda}_{Y}", shift left = 8]
	\\
	{T^2}X
	\arrow[r,"{T^2}g"']
	&
	{T^2}Y
	\arrow[r,"{T^2}{\eta}_Y"']
	&
	{T^3}Y
	\arrow[r,"T{\mu}_Y"']
	&
	{T^2}Y
\end{tikzcd}$$

$$\begin{tikzcd}[font=\fontsize{9}{6}]
	TX
	\arrow[rr,"Tg"]
	\arrow[dd,"T{\eta}_X"']
	&
	{}
	\arrow[dd,Rightarrow,shorten=10,"T{\eta}_g"]
	&
	TY
	\arrow[dd,"T{\eta}_Y"]
	\\
	\\
	{T^2}X
	\arrow[rr,"{T^2}g"']
	&
	{}
	&
	{T^2}Y
	\arrow[rr,"{T^2}{\eta}_Y"]
	\arrow[rrrr,bend right,"1_{{T^2}Y}"' name=A]
	&&
	{T^3}Y
	\arrow[rr,"T{\mu}_Y"]
	\arrow[to=A,Rightarrow,shorten=3,"T{\rho}_Y",red]
	&&
	{T^2}Y
\end{tikzcd}$$

\nocite{*}

\begin{thebibliography}{10}
\providecommand{\bibitemdeclare}[2]{}
\providecommand{\surnamestart}{}
\providecommand{\surnameend}{}
\providecommand{\urlprefix}{Available at }
\providecommand{\url}[1]{\texttt{#1}}
\providecommand{\href}[2]{\texttt{#2}}
\providecommand{\urlalt}[2]{\href{#1}{#2}}
\providecommand{\doi}[1]{doi:\urlalt{https://doi.org/#1}{#1}}
\providecommand{\eprint}[1]{arXiv:\urlalt{https://arxiv.org/abs/#1}{#1}}
\providecommand{\bibinfo}[2]{#2}

\bibitemdeclare{misc}{multipleauthors}
\bibitem{multipleauthors}
\bibinfo{author}{Stack~Exchange: \surnamestart gernot~answering
  effeffe\surnameend} (\bibinfo{year}{2016}): \emph{\bibinfo{title}{Centering
  issues with multiple authors with the same affiliation, {EPTCS} format}}.
\newblock \urlprefix\url{http://tex.stackexchange.com/q/344794/63917}.

\bibitemdeclare{misc}{bibliographystylewebpage}
\bibitem{bibliographystylewebpage}
\bibinfo{author}{R.J. \surnamestart van Glabbeek\surnameend}
  (\bibinfo{year}{2010}): \emph{\bibinfo{title}{The EPTCS bibliography style}}.
\newblock \urlprefix\url{http://biblio.eptcs.org}.

\bibitemdeclare{book}{bookE}
\bibitem{bookE}
\bibinfo{editor}{E.D.I. \surnamestart Thor\surnameend} \&
  \bibinfo{editor}{E.~\surnamestart di~Thor\surnameend}, editors
  (\bibinfo{year}{1987}): \emph{\bibinfo{title}{Title B1: A Book with
  editors}}, \bibinfo{edition}{second} edition.
\newblock {\slshape \bibinfo{series}{Series}}~\bibinfo{volume}{21},
  \bibinfo{publisher}{Publisher}, \bibinfo{address}{Address},
  \doi{10.4204/EPTCS}.
\newblock \eprint{2042.12345}.
\newblock \bibinfo{note}{Note}.

\bibitemdeclare{inbook}{inbookE}
\bibitem{inbookE}
\bibinfo{editor}{E.D.I. \surnamestart Thor\surnameend} \&
  \bibinfo{editor}{E.~\surnamestart di~Thor\surnameend}, editors
  (\bibinfo{year}{1987}): \emph{\bibinfo{title}{Title B2: An Inbook with
  editors}}, \bibinfo{edition}{second} edition, \bibinfo{type}{type (of
  chapter)}~\bibinfo{chapter}{II}, pp. \bibinfo{pages}{1--999}.
\newblock {\slshape \bibinfo{series}{Series}} \bibinfo{volume}{443},
  \bibinfo{publisher}{Publisher}, \bibinfo{address}{Address},
  \doi{10.4204/EPTCS}.
\newblock \eprint{2042.12345}.
\newblock \bibinfo{note}{Note}.

\bibitemdeclare{proceedings}{proceedings}
\bibitem{proceedings}
\bibinfo{editor}{E.D.I. \surnamestart Thor\surnameend} \&
  \bibinfo{editor}{E.~\surnamestart di~Thor\surnameend}, editors
  (\bibinfo{year}{1987}): \emph{\bibinfo{title}{Title B3: A Proceedings}}.
  {\slshape \bibinfo{series}{Series}} \bibinfo{volume}{443},
  \bibinfo{organization}{Organization}, \bibinfo{publisher}{Publisher},
  \bibinfo{address}{Address}, \doi{10.4204/EPTCS}.
\newblock \urlprefix\url{https://ntrs.nasa.gov/citations/20205011579}.
\newblock \bibinfo{note}{Note}.

\bibitemdeclare{article}{article}
\bibitem{article}
\bibinfo{author}{A.~\surnamestart Ut\surnameend},
  \bibinfo{author}{H.~\surnamestart Or\surnameend} \&
  \bibinfo{author}{Co~\surnamestart Author\surnameend} (\bibinfo{year}{1987}):
  \emph{\bibinfo{title}{Title A: An Article}}.
\newblock {\slshape \bibinfo{journal}{Jounal}}
  \bibinfo{volume}{443}(\bibinfo{number}{21}):\bibinfo{eid}{4},
  \doi{10.4204/EPTCS}.
\newblock \eprint{2042.12345}.
\newblock \bibinfo{note}{Note}.

\bibitemdeclare{book}{bookA}
\bibitem{bookA}
\bibinfo{author}{A.~\surnamestart Ut\surnameend},
  \bibinfo{author}{H.~\surnamestart Or\surnameend} \&
  \bibinfo{author}{Co~\surnamestart Author\surnameend} (\bibinfo{year}{1987}):
  \emph{\bibinfo{title}{Title B1: A Book with authors}},
  \bibinfo{edition}{second} edition.
\newblock {\slshape \bibinfo{series}{Series}} \bibinfo{volume}{443},
  \bibinfo{publisher}{Publisher}, \bibinfo{address}{Address},
  \doi{10.4204/EPTCS}.
\newblock \urlprefix\url{https://ntrs.nasa.gov/}.
\newblock \bibinfo{note}{Note}.

\bibitemdeclare{inbook}{inbookA}
\bibitem{inbookA}
\bibinfo{author}{A.~\surnamestart Ut\surnameend},
  \bibinfo{author}{H.~\surnamestart Or\surnameend} \&
  \bibinfo{author}{Co~\surnamestart Author\surnameend} (\bibinfo{year}{1987}):
  \emph{\bibinfo{title}{Title B2: An Inbook with authors}},
  \bibinfo{edition}{second} edition, \bibinfo{type}{type (of
  chapter)}~\bibinfo{chapter}{II}, pp. \bibinfo{pages}{1--999}.
\newblock {\slshape \bibinfo{series}{Series}} \bibinfo{volume}{443},
  \bibinfo{publisher}{Publisher}, \bibinfo{address}{Address},
  \doi{10.4204/EPTCS}.
\newblock \urlprefix\url{https://ntrs.nasa.gov/citations/20205011579}.
\newblock \bibinfo{note}{Note}.

\bibitemdeclare{inproceedings}{inproceedings}
\bibitem{inproceedings}
\bibinfo{author}{A.~\surnamestart Ut\surnameend},
  \bibinfo{author}{H.~\surnamestart Or\surnameend} \&
  \bibinfo{author}{Co~\surnamestart Author\surnameend} (\bibinfo{year}{1987}):
  \emph{\bibinfo{title}{Title C1: An Inproceedings}}.
\newblock In \bibinfo{editor}{E.D.I. \surnamestart Thor\surnameend} \&
  \bibinfo{editor}{E.~\surnamestart di~Thor\surnameend}, editors: {\slshape
  \bibinfo{booktitle}{Booktitle}}, {\slshape \bibinfo{series}{Series}}
  \bibinfo{volume}{443}, \bibinfo{organization}{Organization},
  \bibinfo{publisher}{Publisher}, \bibinfo{address}{Address}, pp.
  \bibinfo{pages}{1--999}, \doi{10.4204/EPTCS}.
\newblock \eprint{2042.12345}.
\newblock \bibinfo{note}{Note}.

\bibitemdeclare{incollection}{incollection}
\bibitem{incollection}
\bibinfo{author}{A.~\surnamestart Ut\surnameend},
  \bibinfo{author}{H.~\surnamestart Or\surnameend} \&
  \bibinfo{author}{Co~\surnamestart Author\surnameend} (\bibinfo{year}{1987}):
  \emph{\bibinfo{title}{Title C2: An Incollection}}.
\newblock In \bibinfo{editor}{E.D.I. \surnamestart Thor\surnameend} \&
  \bibinfo{editor}{E.~\surnamestart di~Thor\surnameend}, editors: {\slshape
  \bibinfo{booktitle}{Booktitle}}, \bibinfo{edition}{second} edition,
  \bibinfo{type}{type}~\bibinfo{chapter}{II}, {\slshape
  \bibinfo{series}{Series}} \bibinfo{volume}{443},
  \bibinfo{publisher}{Publisher}, \bibinfo{address}{Address}, pp.
  \bibinfo{pages}{1--999}, \doi{10.4204/EPTCS}.
\newblock \urlprefix\url{https://ntrs.nasa.gov/citations/20205011579}.
\newblock \bibinfo{note}{Note}.

\bibitemdeclare{techreport}{report}
\bibitem{report}
\bibinfo{author}{A.~\surnamestart Ut\surnameend},
  \bibinfo{author}{H.~\surnamestart Or\surnameend} \&
  \bibinfo{author}{Co~\surnamestart Author\surnameend} (\bibinfo{year}{1987}):
  \emph{\bibinfo{title}{Title R: A Techreport}}.
\newblock \bibinfo{type}{Type (of report)} \bibinfo{number}{21},
  \bibinfo{institution}{Institution}, \bibinfo{address}{Address},
  \doi{10.4204/EPTCS}.
\newblock \eprint{2042.12345}.
\newblock \bibinfo{note}{Note}.

\bibitemdeclare{techreport}{techreport}
\bibitem{techreport}
\bibinfo{author}{A.~\surnamestart Ut\surnameend},
  \bibinfo{author}{H.~\surnamestart Or\surnameend} \&
  \bibinfo{author}{Co~\surnamestart Author\surnameend} (\bibinfo{year}{1987}):
  \emph{\bibinfo{title}{Title R1: A Techreport}}.
\newblock \bibinfo{type}{Technical Report} \bibinfo{number}{21},
  \bibinfo{institution}{Institution}, \bibinfo{address}{Address},
  \doi{10.4204/EPTCS}.
\newblock \urlprefix\url{https://ntrs.nasa.gov/citations/}.
\newblock \bibinfo{note}{Note}.

\bibitemdeclare{phdthesis}{phdthesis}
\bibitem{phdthesis}
\bibinfo{author}{A.~\surnamestart Ut\surnameend},
  \bibinfo{author}{H.~\surnamestart Or\surnameend} \&
  \bibinfo{author}{Co~\surnamestart Author\surnameend} (\bibinfo{year}{1987}):
  \emph{\bibinfo{title}{Title R2: A PhDthesis}}.
\newblock Ph.D. thesis, \bibinfo{school}{School}, \bibinfo{address}{Address},
  \doi{10.4204/EPTCS}.
\newblock \urlprefix\url{https://ntrs.nasa.gov/citations/20205011579}.
\newblock \bibinfo{note}{Note}.

\bibitemdeclare{mastersthesis}{mastersthesis}
\bibitem{mastersthesis}
\bibinfo{author}{A.~\surnamestart Ut\surnameend},
  \bibinfo{author}{H.~\surnamestart Or\surnameend} \&
  \bibinfo{author}{Co~\surnamestart Author\surnameend} (\bibinfo{year}{1987}):
  \emph{\bibinfo{title}{Title R3: A Masterthesis}}.
\newblock Master's thesis, \bibinfo{school}{School},
  \bibinfo{address}{Address}, \doi{10.4204/EPTCS}.
\newblock \eprint{2042.12345}.
\newblock \bibinfo{note}{Note}.

\bibitemdeclare{manual}{manual}
\bibitem{manual}
\bibinfo{author}{A.~\surnamestart Ut\surnameend},
  \bibinfo{author}{H.~\surnamestart Or\surnameend} \&
  \bibinfo{author}{Co~\surnamestart Author\surnameend} (\bibinfo{year}{1987}):
  \emph{\bibinfo{title}{Title R4: A Manual}}, \bibinfo{edition}{second}
  edition.
\newblock \bibinfo{organization}{Organization}, \bibinfo{address}{Address},
  \doi{10.4204/EPTCS}.
\newblock \eprint{2042.12345}.
\newblock \bibinfo{note}{Note}.

\bibitemdeclare{booklet}{booklet}
\bibitem{booklet}
\bibinfo{author}{A.~\surnamestart Ut\surnameend},
  \bibinfo{author}{H.~\surnamestart Or\surnameend} \&
  \bibinfo{author}{Co~\surnamestart Author\surnameend} (\bibinfo{year}{1987}):
  \emph{\bibinfo{title}{Title U: A Booklet}}.
\newblock \bibinfo{howpublished}{Howpublished}, \bibinfo{address}{Address},
  \doi{10.4204/EPTCS}.
\newblock \urlprefix\url{https://ntrs.nasa.gov/citations/20205011579}.
\newblock \bibinfo{note}{Note}.

\bibitemdeclare{misc}{misc}
\bibitem{misc}
\bibinfo{author}{A.~\surnamestart Ut\surnameend},
  \bibinfo{author}{H.~\surnamestart Or\surnameend} \&
  \bibinfo{author}{Co~\surnamestart Author\surnameend} (\bibinfo{year}{1987}):
  \emph{\bibinfo{title}{Title U: A Misc}}.
\newblock \bibinfo{howpublished}{Howpublished}, \doi{10.4204/EPTCS}.
\newblock \urlprefix\url{https://ntrs.nasa.gov/citations/20205011579}.
\newblock \bibinfo{note}{Note}.

\bibitemdeclare{unpublished}{unpublished}
\bibitem{unpublished}
\bibinfo{author}{A.~\surnamestart Ut\surnameend},
  \bibinfo{author}{H.~\surnamestart Or\surnameend} \&
  \bibinfo{author}{Co~\surnamestart Author\surnameend} (\bibinfo{year}{1987}):
  \emph{\bibinfo{title}{Title U: An Unpublished}}, \doi{10.4204/EPTCS}.
\newblock \eprint{2042.12345}.
\newblock \bibinfo{note}{Note}.

\end{thebibliography}


\begin{thebibliography}{9}
	\bibitem{Toposes Triples and Theories} Barr, M., Wells, C., Toposes, triples and theories, \textit{Reprints in Theory and Applications of Categories}, No. 12, pp. 1-287, (2005)
	\bibitem{Categorical Foundations of Gradient Based Learning} Cruttwell, G., Gavranovi´c, B., Ghani, N., Wilson, P., Zanasi, F. Categorical Foundations of Gradient-Based Learning, \textit{Programming Languages and Systems}, 31st European Symposium on Programming, (2022) \url{https://dx.doi.org/10.1007/978-3-030-99336-8_1}
	\bibitem{Erwig Ren Monadification of functional programs} Erwig, M., Ren, D., Monadification of Functional Programs, \textit{Science of Computer Programming}, Vol. 52, Issues 1–3, pp. 101-129, (2004) \url{https://doi.org/10.1016/j.scico.2004.03.004}
	\bibitem{Fiore Gambino Hyland Monoidal Bicategories differential linear logic and analytic functors} Fiore, M., Gambino, N., Hyland, M., Monoidal Bicategories, Differential Linear Logic, and Analytic Functors, \url{
		https://doi.org/10.48550/arXiv.2405.05774}
	\bibitem{Fuhrmann} Führmann, C. Direct models of the computational $\lambda$-calculus, \textit{Electronic Notes in Theoretical Computer Science}, Vol. 20, pp 245-292 (1999), \url{https://doi.org/10.1016/S1571-0661(04)80078-1}
	\bibitem{Formal Theory of Pseudomonads} Gambino, N., Lobbia, G. On the formal theory of pseudomonads and pseudodistributive laws, \textit{Theory and Applications of Categories}, Vol. 37, No. 2, pp 14-56, (2021)
	\bibitem{Gray Formal Category Theory} Gray, J. W., \textit{Formal Category Theory: Adjointness for 2-categories}, Springer Lecture Notes in Mathematics, Vol. 391, (1974), \url{https://doi.org/10.1007/BFb0061280}
	\bibitem{Gurski Coherence in Three Dimensional Category Theory} Gurski, N., \textit{Coherence in Three-Dimensional Category Theory}, Cambridge Tracts in Mathematics, Vol. 201, (2013), \url{
		https://doi.org/10.1017/CBO9781139542333}
	\bibitem{Kelly elementary observations on 2-categorical limits} Kelly, G. M., Elementary Observations on 2-Categorical Limits, \textit{Bulletin of the Australian Mathematical Society}, Vol. 39, pp. 301-317, (1989), \url{ https://doi.org/10.1017/S0004972700002781}
	\bibitem{Lack Miranda Universal Property of 2-category of monads} Lack, S., Miranda, A., What is the universal property of the 2-category of monads?, \textit{Theory and Applications of Categories}, Vol. 42, 2024, No. 1, pp 2-18., (2024)
	\bibitem{FTM2} Lack, S., Street, S., The Formal Theory of Monads II, \textit{Journal of Pure and Applied Algebra}, Vol. 175, pp 243-265, (2002) \url{https://doi.org/10.1016/S0022-4049(02)00137-8}
	\bibitem{Beck's Theorem For Pseudomonads} Le Creurer, I.J., Marmolejo, F, Vitale, E.M.. Beck's theorem for pseudo-monads. \textit{Journal of Pure and Applied Algebra}, Vol. 173, pp 293-313, (2000), \url{https://doi.org/10.1016/S0022-4049(02)00038-5}
	\bibitem{Lemay Cartesian Differential Kleisli Categories} Lemay, J. S., Cartesian Differential Kleisli Categories, \textit{Electronic Notes in Theoretical Informatics and Computer Science}, Vol. 3, Proceedings of Mathematical Foundations of Programming Semantics XXXIX, (2023), \url{https://doi.org/10.46298/entics.12278}
	\bibitem{Marmolejo Pseudodistributive 1} Marmolejo, F., Distributive laws for pseudomonads I, \textit{Theory and Applications of Categories}, Vol. 5, No. 5, pp 81-147, (1999).
	\bibitem{Doctrines fully faithful adjoint string} Marmolejo, F., Doctrines whose structure forms a fully faithful adjoint string, \textit{Theory and Applications of Categories}, Vol. 3, No. 2, pp. 22–23, (1997).
	\bibitem{Miranda PhD} Miranda, A., Topics in Low Dimensional Higher Category Theory, Ph. D. Thesis, Macquarie University, (2023)
	\bibitem{Pacquet Saville Strong Pseudomonads and Premonoidal Bicategories} Paquet, H., Saville, P., Strong Pseudomonads and Premonoidal Bicategories, available at \url{https://arxiv.org/abs/2304.11014}
	\bibitem{Power 2 cat pasting} Power, A. J., A 2-categorical pasting theorem, \textit{Journal of Algebra}, Volume 129, Issue 2, pp. 439-445, (1990), \url{https://doi.org/10.1016/0021-8693(90)90229-H}
	\bibitem{Formal Theory of Monads} Street R. The formal theory of monads, \textit{Journal of Pure and Applied Algebra}, Vol. 2, pp. 149-168, (1978), \url{https://doi.org/10.1016/0022-4049(72)90019-9}
\end{thebibliography}
\end{document}